\newcommand{\mbR}{\mathbb{R}}
\newcommand{\cB}{\mathcal{B}}
\newcommand{\cP}{\mathcal{P}}
\newcommand{\cM}{\mathcal{M}}
\newcommand{\cH}{\mathcal{H}}
\newcommand{\cF}{\mathcal{F}}
\newcommand{\cX}{\mathcal{X}}
\newcommand{\cE}{\mathcal{E}}
\newcommand{\rd}{\mathrm{d}}
\theoremstyle{plain}
\newtheorem{theorem}{Theorem}
\newtheorem{lemma}{Lemma}
\newtheorem{corollary}{Corollary}
\newtheorem{proposition}{Proposition}
\newtheorem{definition}{Definition}
\newtheorem{example}{Example}
\newtheorem{remark}{Remark}
\DeclareMathOperator*{\argmin}{\arg\min}
\begin{document}

\begin{frontmatter}
\title{Exponential Concentration for Geometric-Median-of-Means in Non-Positive Curvature Spaces}
\runtitle{Exponential Concentration on NPC Spaces}

\begin{aug}
\author[A]{\fnms{Ho} \snm{Yun}\ead[label=e1]{zrose0921@gmail.com}}
\and
\author[A]{\fnms{Byeong U.} \snm{Park}\ead[label=e2,mark]{bupark@snu.ac.kr}}
\runauthor{H. Yun and B. U. Park}
%%%%%%%%%%%%%%%%%%%%%%%%%%%%%%%%%%%%%%%%%%%%%%
%% Addresses                                %%
%%%%%%%%%%%%%%%%%%%%%%%%%%%%%%%%%%%%%%%%%%%%%%
\address[A]{Department of Statistics, Seoul National University,
\printead{e1,e2}}
\end{aug}

\begin{abstract}
In Euclidean spaces, the empirical mean vector as an estimator of the population mean
is known to have polynomial concentration unless a strong tail assumption is imposed on the underlying probability measure. The idea of median-of-means tournament has been considered
as a way of overcoming the sub-optimality of the empirical mean vector.
In this paper, to address the sub-optimal performance of the empirical mean in a more general setting,
we consider general Polish spaces with a general metric, which are allowed to be non-compact and of infinite-dimension. We discuss the estimation of the associated population Fr\'echet mean,
and for this we extend the existing notion of median-of-means to this general setting.
We devise several new notions and inequalities associated with the geometry of the underlying metric, and using them
we study the concentration properties of the extended notions of median-of-means as the estimators of the population Fr\'echet mean.
We show that the new estimators achieve exponential concentration under only a second moment
condition on the underlying distribution, while the empirical Fr\'echet mean has polynomial
concentration.
We focus our study on spaces with non-positive Alexandrov curvature
since they afford slower rates of convergence than spaces with positive curvature.
We note that this is the first work that derives non-asymptotic concentration inequalities for
extended notions of the median-of-means in non-vector spaces with a general metric.
\end{abstract}

\begin{keyword}
\kwd{concentration inequalities}
\kwd{Fr\'echet mean}
\kwd{median-of-means estimators}
\kwd{non-Euclidean geometry}
\kwd{NPC spaces}
\kwd{power transform metric}
\end{keyword}

\end{frontmatter}

\section{Introduction}

The notion of a Fr\'echet mean extends the definition of mean, as a center of probability distribution, to metric space settings. Given a Borel probability measure $P$ on a metric space $(\mathcal{M},d)$ and a functional $\eta:\mathcal{M}\times\mathcal{M}\rightarrow\mathbb{R}$, the \textit{Fr\'echet mean} (or the barycenter) \cite{frechet1948elements} of $P$ is any $x^*$ such that
\begin{equation}\label{F-mean}
x^* \in \argmin_{x \in \mathcal{M}} \int_{\mathcal{M}}\eta(x,y)\,\mathrm{d} P(y).
\end{equation}
This accords with the usual definition of the Euclidean mean for $\mathcal{M}=\mathbb{R}^{D}$
when $\eta(x,y)=d(x,y)^{2}=|x-y|^2$. In this paper, we consider the estimation of the Fr\'echet mean of a heavy-tailed distribution.
Our goal is to find estimators that have better non-asymptotic accuracy than the \textit{empirical Fr\'echet mean},
\begin{equation}\label{emp-F-mean}
x_{n} \in \argmin_{x \in \mathcal{M}} \frac{1}{n}\sum_{i=1}^{n} \eta(x,X_{i})
\end{equation}
when $P$ is heavy-tailed on $\mathcal{M}$. The $x_n$ is an $M$-estimator in a broad sense. The present work is an achievement of this goal
for global {\it non-positive curvature} (NPC) spaces, also called CAT(0) or Hadamard spaces, that are of finite- or infinite-dimension.

Our coverage with NPC spaces is genuinely broad enough. It includes Hilbert spaces with Euclidean spaces as a special case, and various other types of metric spaces,
some of which are listed below.
\begin{itemize}
\item A hyperbolic space $\mathcal{H}_{D}$ has constant non-positive sectional curvature, which results in rich geometrical features due to explicit expressions for the $\log$ and $\exp$ maps.
The deviation of two geodesics in a hyperbolic space accelerates while drifting away from the origin, which allows a natural hierarchical structure in neural networks  \cite{ganea2018hyperbolic, tifrea2018poincar}.

\item The space $\mathcal{S}_{D}^{+}$ of symmetric positive definite matrices has non-constant and non-positive sectional curvature, which appears frequently in diffusion tensor imaging \cite{fillard2005extrapolation, fillard2007measuring}.
The space $\mathcal{S}_{D}^{+}$ is not only a Riemannian manifold, but also an Abelian Lie group with additional algebraic structure \cite{Arsigny2007,Lin2019,pennec2019riemannian}. Thus, additive regression modeling is allowed for random elements taking values in $\mathcal{S}_{D}^{+}$ \cite{lin2020additive}.

\item The Wasserstein space $\mathcal{P}_{2}(\mathbb{R})$ over $\mathbb{R}$ has vanishing Alexandrov curvature \cite{kloeckner2010geometric} and plays a fundamental role in optimal transport \cite{villani2009optimal}. The Wasserstein space has rich applications in modern theories,
such as change point detection \cite{horvath2021monitoring}, and Wasserstein regression \cite{chen2021wasserstein, zhang2021wasserstein, ghodrati2021distribution}.
\end{itemize}
Apart from the above-mentioned examples, there are other NPC spaces, such as phylogenetic trees \cite{ pennec2019riemannian, billera2001geometry}, that are of great importance
in applications.

A great deal of statistical inference is fundamentally based on the estimation of the Fr\'echet mean $x^*$.
While classical statistics leaned toward the asymptotic behavior of estimators, the derivation of non-asymptotic probability bounds,
called {\it concentration or tail inequalities}, has drawn increasing attention recently.
For an estimator $\hat{x}=\hat{x}(X_{1}, \dots,X_{n})$ of $x^*$,
concentration inequalities for $\hat{x}$ are given in the form of
\begin{equation}\label{radius}
\mathbb{P}\left(d(\hat{x},x^*) \leq r(n,\Delta) \right) \geq 1-\Delta,
\end{equation}
where $r(n,\Delta)$ is the radius of concentration corresponding to a tail probability level $\Delta$ whose dependence on $n$ is typically determined by the metric-entropy of $\cM$.
There have been only a few attempts to establish such concentration inequalities when $(\mathcal{M},d)$ is not a linear space, and all of them have been restricted to
the empirical Fr\'echet mean $\hat{x}=x_n$, to the best of our knowledge.
For $\cM=\mathbb{R}^D$, it is widely known that the empirical mean $x_n$ is sub-optimal
achieving only {\it polynomial concentration} for heavy-tailed $P$ in the sense that $\Delta^{-1}=f(n,r(n,\Delta))$
for some $f$ with $f(n,r)$ for fixed $n$ being a polynomial function of $r$.

A solution to alleviating the sub-optimality of the empirical mean $x_n$ is to
partition $\{X_1, \ldots, X_n\}$ into a certain number of blocks and then take a `median' of the within-block
sample means. This robustifies the empirical mean against heavy-tailed distribution
while it inherits its efficiency for light-tailed distribution. The idea
was first introduced by \cite{nemirovskij1983problem}.
When $\mathcal{M}=\mathbb{R}$, the resulting estimator, termed as {\it median-of-means},
achieves the concentration inequality \eqref{radius}
with $r(n,\Delta)=C \times n^{-1/2}\sqrt{\log(1/\Delta)}$ for some constant $C>0$ \cite{devroye2016sub}.
The one-dimensional result was extended to $\cM=\mathbb{R}^D$ by \cite{lugosi2019sub} developing the idea of `median-of-means tournament'.
The resulting estimator $\hat{x}$, also termed as median-of-means, was found to achieve a \textit{sub-Gaussian performance}:
\begin{equation}\label{sub-Gauss_conc}
\mathbb{P}\left(\|\hat{x}-x^*\| \leq C_{1} \sqrt{\frac{\text{tr}(\Sigma_{X})}{n}}+ C_{2} \sqrt{\frac{\|\Sigma_{X}\| \log(1/\Delta)}{n}} \right) \geq 1-\Delta
\end{equation}
for some constants $C_{1}, C_{2}>0$, where $\Sigma_{X}$ is the covariance matrix and $\| \cdot \|$ is the operator norm.
The concentration property at \eqref{sub-Gauss_conc} is what the empirical mean achieves when $X$ has a multivariate sub-Gaussian distribution, so the name
sub-Gaussian performance.
Both results establish {\it exponential concentration} in the sense that
$\Delta^{-1}=f(n,r(n,\Delta))$ with $f(n,r)$ for fixed $n$ being an exponential function of $r$.
There have been also proposed several other mean estimators satisfying \eqref{sub-Gauss_conc} that can be computed in linear time $(nD \log(1/\Delta))^{O(1)}$ by using the median-of-means principle,
see \cite{hopkins2020mean,cherapanamjeri2019fast, depersin2022robust, lei2020fast}, and other related works on robust mean estimation, e.g. \cite{catoni2012challenging, lugosi2021robust}.

All the aforementioned works, however,
treated Euclidean spaces for $\eta=d^{2}$ with extensive use of the associated inner product. Apart from the Euclidean cases, there are few works for infinite-dimensional $\cM$, e.g. \cite{lerasle2019monk} for a kernel-enriched domain and
\cite{minsker2015geometric} for a Banach space, both of which considered $\eta=d^{2}$.
We are also aware of \cite{hsu2016loss} that studied the case of arbitrary metric spaces.
However, the latter work does not use the geometric features of the underlying metric space
but assumes certain high-level conditions. The conditions include the existence of an estimator $\hat{x}$ and a random distance $DIST$ on $(\mathcal{M},d)$
such that $\mathbb{P} (d(\hat{x}, x^*) \leq \varepsilon) \geq 2/3$ for some $\varepsilon>0$ and
$\mathbb{P}(d(x,y)/2 \leq DIST(x,y) \leq 2d(x,y)) \geq 8/9$
for all $x,y \in \mathcal{M}$.
We highlight that the present work is the first to use the median-of-means principle without imposing strong assumptions such as in \cite{hsu2016loss}
when $\cM$ is a non-vector space. Our technical development is significantly different from the existing works in the literature.
We note that there is no distribution on non-vector spaces corresponding to Gaussian or sub-Gaussian distribution on $\mathbb{R}^D$,
neither are available the notions of trace and operator norm, so that an analogue of the sub-Gaussian performance as at \eqref{sub-Gauss_conc} for non-vector spaces does not seem to be possible.
Nevertheless, we establish for our estimators exponential concentration in the sense that the inverse of `probability regret' $\Delta$ at
\eqref{radius} is an exponential function of the radius of concentration.

In this paper, we first extend the notion of median-of-means to general metric spaces $\cM$.
Then, we address the problem of robust estimation by taking into account the {\it metric geometry} of the underlying space.
To this end, we use the {\it CN (`Courbure N\'egative' in French)}, {\it quadruple and variance inequalities},
which are not well known in statistics, instead of the inner product.
We show that, when $\cM$ is an NPC space and $\eta(x,y)=d(x,y)^{\alpha}$, the
corresponding {\it geometric-median-of-means} estimator achieves exponential concentration for all $\alpha \in (1,2]$,
under only the second moment
condition $\mathbb{E}\,d(x^*,X)^2<+\infty$.
In particular, for the treatment of the `bridging' case where $\alpha \in (1,2)$,
we introduce a further extended notion of the geometric-median-of-means, for which
we devise generalized versions of the CN and variance inequalities.
Our work is the first that provides concentration inequalities for
median-of-means type estimators
with explicit constants, when $\eta$ is not necessarily $d^{2}$ or $\cM$ is a possibly infinite-dimensional non-vector space.

We work with (possibly non-compact) NPC spaces for the geometric-median-of-means estimators since the Fr\'echet mean $x_n$ has poor performance in such spaces.
In fact, the concentration properties of $x_n$ depend heavily on the compactness and curvature of $\cM$.
For general Polish spaces, an exponential concentration inequality may be established
with $x_n$ if the space is compact \cite{ahidar2019rate}.
For non-compact geodesic spaces, however, only polynomial concentration is possible with $x_n$
unless a strong assumption on the tail of $P$ is imposed. The latter was proved
for Euclidean spaces, a special case of non-compact spaces \cite{catoni2012challenging}.
As for the curvature of the underlying space, $x_n$ has a poorer rate of convergence for $\cM$ with non-positive
curvature than with positive curvature (\cref{Sec_Empirical means,Subsection 3.2.3}). Curvature and compactness are related in the case where $\cM$ is a Riemannian manifold. The Bonnet-Myers theorem states that, if the sectional curvature of a Riemannian manifold is bounded
from below by $\kappa>0$, then ${\rm diam}(\cM) \le \pi/\sqrt{\kappa}$ so that it is compact.
To complement the existing works for $x_n$, we demonstrate the polynomial concentration of $x_n$, as well, for general Polish spaces in \cref{Sec_Empirical means},
and for NPC spaces as a specialization of the latter in \cref{Section 4}.
We note that there have been few works on non-asymptotic theory of $x_n$ for non-Euclidean $\cM$,
although its asymptotic theory has been widely studied \cite{bhattacharya2003large, bhattacharya2005large,le2017existence,
sturm2003probability}.
The work in \cref{Sec_Empirical means} for the empirical Fr\'echet mean $x_n$ paves our way
for developing the main results in \cref{Sec_MoM} for the geometric-median-of-means estimators.

Our treatment of NPC spaces relies on the metric geometry of the underlying space $\cM$, rather than on the differential geometry of $\cM$.
Consequently, the radius of concentration $r(n,\Delta)$ in the exponential inequalities in \cref{Sec_MoM} does not involve any term related
to the structure of the tangential vector space of $\cM$, which corresponds to $\Sigma_X$ in Lugosi \cite{lugosi2019sub} when $\cM=\mathbb{R}^D$.
We find that assuming $\mathbb{E}\,d(x^*,X)^{2}<+\infty$ is enough
to deduce the exponential concentration. The flexibility inherent in our framework thus allows our work to serve
as the basic constituent for a wide range of principal methods for non-Euclidean data.
In particular, the theoretical development achieved in this paper may be adapted to
the robustification of various recent Fr\'echet regression techniques
\cite{lin2020additive, chen2021wasserstein, zhang2021wasserstein, ghodrati2021distribution, petersen2019frechet}.

%%%%%%%%%%%%%%%%%%%%%%%%%%%%%%%%%%%%%%%%%%%%%%%%%%%%%%%%%%%%%%%%%%%%
\section{Assumptions}\label{assumptions}

In this section, we present the main structures of the underlying metric, where we base our theory,
and key assumptions on the entropy of the underlying space.
The validity of the assumptions will be discussed in \cref{Section 4}.

Let $(\mathcal{M},d)$ be a separable and complete metric space (Polish space).
Consider the set of all probability measures on $\cM$ denoted by
$\cP(\cM)$. Let $P$ be a probability measure with finite second moment, i.e.
\begin{equation*}
P \in \mathcal{P}_2(\mathcal{M}):= \Big\{P \in \mathcal{P}(\cM): \int_{\mathcal{M}}d(x,y)^{2} \,\rd P(y)<+\infty\,\mbox{ for some }\, x \in  \mathcal{M}\Big\}.
\end{equation*}
We note that, if $\int_{\mathcal{M}}d(x,y)^{2} \,\rd P(y)<+\infty$ for some $x \in \cM$, then it holds for all $x\in \cM$.
Let $\eta:\mathcal{M}\times\mathcal{M} \rightarrow \mathbb{R}$ be a measurable function.
Throughout this paper, we assume that there exists $x^* \in \mathcal{M}$ for which \eqref{F-mean} holds and let $X_{1}, X_{2}, \dots, X_{n}$ be the i.i.d. observations of a random element $X$ governed by a probability measure $P$, and $P_{n}$ be its empirical probability measure.
Then, the empirical Fr\'echet mean $x_n$ at \eqref{emp-F-mean} can be written as
\begin{equation*}
x_{n} \in \argmin_{x \in \mathcal{M}} \int_{\mathcal{M}} \eta(x,y)\,\mathrm{d} P_{n}(y).
\end{equation*}

To analyze the deviation of $x_{n}$ from $x^*$ by making use of the difference of their $\eta$-functional values,
we introduce two assumptions, the first on $\eta:\mathcal{M}\times\mathcal{M} \rightarrow \mathbb{R}$ and the second on $P\in \mathcal{P}_2(\mathcal{M})$:
\begin{itemize}\setlength{\itemindent}{1em}
\item[(A1)] {\it Quadruple inequality}:  There is a nonnegative function
$l:\mathcal{M}\times\mathcal{M}\rightarrow [0,+\infty)$, called \textit{growth function},
such that, for any $y,z,p,q \in \mathcal{M}$,
\begin{equation*}
\begin{split}
& l(y,z)=0 \;\Leftrightarrow \;y=z, \\
&\eta(y,p)-\eta(y,q)-\eta(z,p)+\eta(z,q) \leq 2l(y,z)\cdot d(p,q).
\end{split}
\end{equation*}
\item[(A2)] {\it Variance inequality}: There exist constants $K>0$ and  $\beta \in(0,2)$ such
that, for all $x \in \mathcal{M}$,
\begin{equation*}
l\left(x, x^{*}\right)^{2} \leq K\left(\int_{\mathcal{M}}\left(\eta(x, y)-\eta\left(x^{*}, y\right)\right) \mathrm{d} P(y)\right)^{\beta}.
\end{equation*}
\end{itemize}
We note that (A1) and (A2) together imply the {\it uniqueness} of the Fr\'echet mean $x^*$.

%%%%%%%%%%%%%%%%%
\begin{example}\label{hilb_example1}
{\rm Consider the case where $\cM$ is a Hilbert space $\cH$ with an inner product $\langle \cdot, \cdot\rangle$ and $d(x,y)=\|x-y\|$ for the induced norm $\|\cdot\|$ of $\langle \cdot, \cdot\rangle$.
Let $\eta=d^2$. If $X$ has finite second moment, i.e. $\mathbb{E}\,d(x^*,X)^2<+\infty$,
then $x^*=\mathbb{E}X$ is the {\it unique} barycenter of $X$ in the sense of Bochner integration. Also, it holds that
\begin{equation*}
\begin{split}
&\eta(y,p)-\eta(y,q)-\eta(z,p)+\eta(z,q) \\
&\qquad =(2\langle y-q,q-p \rangle+ \|q-p\|^{2})-(2\langle z-q,q-p \rangle+ \|q-p\|^{2}) \\
&\qquad =2\langle y-z,q-p \rangle\\
&\qquad \leq 2\|y-z\| \cdot \|p-q\|.
\end{split}
\end{equation*}
Thus, (A1) holds with $l=d$, and (A2) does with equality holding
always for all $x \in \cM$ with} $K=\beta=1$:
\begin{align*}
\mathbb{E}\left(\eta(x, X)-\eta\left(x^{*}, X\right)\right)=\mathbb{E}\left(2\langle x^{*}-X,x-x^{*} \rangle+ \|x-x^{*}\|^{2}\right)
=\|x-x^{*}\|^{2}. \quad \mbox{\qed}
\end{align*}
\end{example}

For curved spaces, the inequality in (A2) may be satisfied, but with equality
not holding always for all $x\in\cM$ in general, contrary to the Hilbertian
case. Moreover, both $x_{n}$ and $x^*$ do not have a closed form expression for curved metric spaces although $x_{n}$ has for Hilbert spaces. Therefore, in order to derive a concentration inequality for $x_{n}$,
we need an inequality that gives an upper bound to the discrepancy $l(x_{n}, x^{*})$ between $x_{n}$ and $x^*$.
The variance inequality (A2) implies that $l(x_{n}, x^{*})$ can be controlled by the positive function $\eta(x_{n}, \cdot)-\eta(x^{*}, \cdot)$,
called the \textit{empirical excess risk} of $\eta$:
\begin{equation}\label{empi_var_ineq}
    l\left(x_{n}, x^{*}\right)^{2} \leq K\left(\int_{\mathcal{M}}\left(\eta(x_{n}, y)-\eta\left(x^{*}, y\right)\right) \mathrm{d} P(y)\right)^{\beta}.
\end{equation}
For the usual choice $\eta=d^{2}$, it turns out that (A1) and (A2) hold with $l=d, K=\beta=1$ for general
NPC spaces $\mathcal{M}$, see \cref{Subsection 4.1} for details.

Bounding the right hand side of \eqref{empi_var_ineq} with a high probability depends on the geometric properties of the class of functions
$\eta(x, \cdot)-\eta(x^{*}, \cdot)$ for $x \in \cM$. It turns out that the dependence is through the {\it centered functional} $\eta_c$
defined by $\eta_c(x,\cdot)=\eta(x,\cdot)- \int_{\mathcal{M}} \eta(x,y) \rd P(y)$. Put $f_\eta(x,\cdot)=\eta_c(x,\cdot)-\eta_c(x^*,\cdot)$, $x \in \cM$.

\begin{definition}\label{def1}
For $\delta \geq 0$,
\begin{equation*}
\begin{split}
    &\cM_\eta(\delta)=\left\{x \in \mathcal{M}: \int_{\mathcal{M}}\left(\eta(x, y)-\eta\left(x^{*}, y\right)\right) \mathrm{d} P(y) \leq \delta\right\}, \\
    &\mathcal{F}_\eta(\delta)=\{f_\eta(x,\cdot): x \in \cM_\eta(\delta)\},\\
    &\sigma_\eta^{2}(\delta)=\sup \left\{\int_{\mathcal{M}} f_\eta(x,y)^{2} \,\rd P(y) : x \in \cM_\eta(\delta)\right\}.
\end{split}
\end{equation*}
\end{definition}

\begin{example}\label{hilb_example2}
{\rm Consider the $\eta$ and $X$ in \cref{hilb_example1}.
Let $\Sigma_{X}:\mathcal{H} \times \mathcal{H} \rightarrow \mathbb{R}$ be the covariance operator of $X$ defined by
$\Sigma_{X}(x,y)=\mathbb{E}\,\left(\langle x,X-x^* \rangle \langle y,X-x^* \rangle\right)$ and $\lambda_{max}$ be its largest eigenvalue.
From \cref{hilb_example1}, it is straightforward to see that $\cM_\eta(\delta)=B(x^*,\sqrt{\delta})$ and $\mathbb{E} \,\eta(x,X)=\operatorname{tr}(\Sigma_{X})+\|x-x^*\|^{2}$,
where $B(x,r)$ denotes the ball centered at $x$ with radius $r$, $\operatorname{tr}(\Sigma_{X})=\sum_{k} \Sigma_{X}(e_k,e_k)$ and
$\{e_k: k \ge 1\}$ is an arbitrary orthonormal basis of $\mathcal{H}$.
Let $\|\cdot\|_{2,P}$ be defined by $\|f\|_{2,P}^2=\mathbb{E} \,f(X)^2$. Then,
\begin{equation*}
\begin{split}
\eta_c(x,y)&=\|x-y\|^{2}-\|x-x^*\|^{2}-\operatorname{tr}(\Sigma_{X}),\\
f_\eta(x,y)&=2\langle x-x^{*},x^{*}-y \rangle, \\
\|f_\eta(x,\cdot)-f_\eta(y,\cdot)\|^{2}_{2,P}&= 4\,\mathbb{E} \,\left(\langle x-y,X-x^{*} \rangle^2\right)=4 \,\Sigma_{X}(x-y, x-y).
\end{split}
\end{equation*}
Note that $f_\eta(x,\cdot):\mathcal{H} \rightarrow \mathbb{R}$ is an affine function and $f_\eta(x^*,\cdot)\equiv 0\equiv f_\eta(\cdot,x^*)$. Also, from the Cauchy-Schwarz inequality, we have
\begin{equation*}
\begin{split}
   \sigma_\eta^{2}(\delta)
   &=\sup \left\{4 \mathbb{E}(\langle x-x^{*}, X-x^{*} \rangle^{2}) : x \in B(x^*,\sqrt{\delta})\right\} \\
   &=\sup \left\{4 \Sigma_{X}(x-x^{*},x-x^{*}) : x \in B(x^*,\sqrt{\delta})\right\} \\
   &=4 \delta \cdot \lambda_{max}. \quad \mbox{\qed}
\end{split}
\end{equation*}}
\end{example}

Under the assumptions (A1) and (A2), it holds that
\begin{align}\label{env-bound}\begin{split}
\sup_{x \in \cM_\eta(\delta)}f_\eta(x,y)
&=\sup_{x \in \cM_\eta(\delta)}
\int_{\mathcal{M}}\left(\eta(x,y)-\eta(x^*,y)-\eta(x,z)+\eta(x^*,z)\right) \,\rd P(z) \\
&\leq 2\sup_{x \in \cM_\eta(\delta)}\int_{\mathcal{M}}l(x,x^*)d(y,z) \,\rd P(z) \\
&\leq 2\sqrt{K \delta^{\beta}} \int_{\mathcal{M}}d(y,z) \,\rd P(z)=:H_{\delta,\eta}(y).
\end{split}
\end{align}
By definition $H_{\delta,\eta}$ {\it envelops} the class $\mathcal{F}_\eta(\delta)$ of functions under the assumptions (A1) and (A2).
Let $\|\cdot\|_{2,P_{n}}$ be defined by
\[
\|f\|_{2,P_n}^2=n^{-1}\sum_{i=1}^n f(X_i)^2, \quad f:\mathcal{M} \rightarrow \mathbb{R}.
\]
Note that $\|\cdot\|_{2,P_{n}}$ is a pseudo metric.
To analyze high probability concentration, toward zero, of the right hand side of \eqref{empi_var_ineq}, we consider the following assumption on the $\|\cdot\|_{2,P_n}$-metric entropy
of $\cM$.
For a totally bounded subset $\mathcal{S}$ of a metric space $(\cF,d)$, we let
$N(\tau,\mathcal{S},d)$ denote the minimal number of balls with radius $\tau$ that cover $\mathcal{S}$, and call it $\tau$-covering number.

\begin{itemize}\setlength{\itemindent}{1em}
\item[(B1)] {\it Finite-dimensional $\cM$}: There are some constants $A, D>0$ such that, for any $\delta>0$ and $n \in \mathbb{N}$,
\[
N\left(\tau\|H_{\delta,\eta}\|_{2,P_{n}},\mathcal{F}_\eta(\delta), \|\cdot\|_{2,P_{n}}) \right) \leq\left(\frac{A}{\tau}\right)^{D}, \quad 0<\tau \leq 1,
\]
\end{itemize}

The constant $D$ in the assumption (B1) is related to the index of VC(Vapnik-\u{C}ervonenkis)-type class of functions, which appears frequently in M-estimation.
According to the common definition \cite{gine2021mathematical}, $\mathcal{F}_\eta(\delta)$ is of VC-type with respect to $H_{\delta,\eta}$ if
\begin{align}\label{VC-type}
\sup_{Q \in \mathcal{P}(\mathcal{M})}N\left(\tau\|H_{\delta,\eta}\|_{2,Q},\mathcal{F}_\eta(\delta), \|\cdot\|_{2,Q}) \right) \leq\left(\frac{A}{\tau}\right)^{D_{\rm vc}}
\end{align}
for some constants $A, D_{\rm vc}>0$. The constant $D_{\rm vc}$, termed as {\it VC index}, may not equal the dimension of $\cM$ in general, but is usually larger, and
\eqref{VC-type} implies (B1) with $D=D_{\rm vc}$, the latter being what we actually need in our framework.
Because of the implication, $\mathcal{F}_\eta(\delta)$ with (B1) may be regarded as a \textit{weak} VC-type class
of functions, and $D$ as a \textit{weak} VC index. In \cref{prop_npc_cover} given later in
\cref{Section 4} we show that (B1) holds with $D=\operatorname{dim}(\mathcal{M})$ in the case where
$\cM$ is an NPC space with $\operatorname{dim}(\mathcal{M})<+\infty$ and $\eta=d^2$.

For infinite-dimensional scenarios, we make the following assumption on the geometric complexity of $\cM$.

\begin{itemize}\setlength{\itemindent}{1em}
\item[(B2)] {\it Infinite-dimensional $\cM$}:
There are some constants $A, \zeta>0$ such that, for any $\delta>0$ and $n \in \mathbb{N}$,
\begin{equation*}
\log N\left(\tau\|H_{\delta,\eta}\|_{2,P_{n}},\mathcal{F}_\eta(\delta), \|\cdot\|_{2,P_{n}}) \right) \leq \frac{A}{\tau^{2\zeta}}, \quad 0<\tau \leq 1.
\end{equation*}
\end{itemize}

The constant $\zeta$ describes how quickly the covering number grows as $\tau$ decreases. For probability measures $P$ with non-compact support,
the complexity constant depends largely on the curvature of $\mathcal{M}$.
Here and throughout the paper, `curvature' means sectional curvature for Riemannian manifolds, and
Alexandrov curvature for general metric spaces.
When $\eta=d^{2}$, we get that $\zeta=1$ for Hilbert spaces $\cM$,
$\zeta \leq 1$ for geodesic spaces with positive curvature, and
$\zeta \geq 1$ for geodesic spaces with non-positive curvature, see \cref{Subsection 3.2.3}.
Based on this, we call $\zeta$ the \textit{curvature complexity} of $\cM$.

%%%%%%%%%%%%%%%%%%%%%%%%%%%%%%%%%%%

\section{Empirical Fr\'echet means}\label{Sec_Empirical means}

In  this section, we present two theorems that establish polynomial concentration for empirical Fr\'echet means
under the assumptions (A1), (A2), (B1) and (B2) in the case where $\cM$ is a general Polish space.
The theorems are used in developing exponential concentration for geometric-median-of-means estimators to be introduced in \cref{Sec_MoM}.
Throughout this section, we assume that $P$ has finite second moment, i.e., $\sigma_{X}^2:=\mathbb{E}\,d(x^*,X)^{2}<+\infty$.

\begin{theorem}\label{thm_fin_conv}
Assume (A1), (A2) and (B1), and let $(K,\beta)$ and $(A, D)$ be the constant pairs that appear in (A2) and (B1), respectively.
Then, for all $n\in \mathbb{N}$ and $\Delta \in (0,1)$,
\begin{align*}
l(x_{n},x^*)
\leq C_\Delta\cdot \left( \frac{\sigma_{X}}{\sqrt{n}} \right)^{\frac{\beta}{2-\beta}}
\end{align*}
with probability at least $1-\Delta$, where $C_\Delta$ is given by
\begin{equation*}
C_\Delta= K^{\frac{1}{2-\beta}}\left\{32 \left(24\sqrt{AD}+\sqrt{\frac{2}{\Delta}} \right) \right\}^{\frac{\beta}{2-\beta}}.
\end{equation*}
\end{theorem}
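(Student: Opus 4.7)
The plan is to reduce the claim to a bound on the population excess risk
$\delta^{*} := \int_{\cM}(\eta(x_n, y) - \eta(x^{*}, y))\,\rd P(y)$ and then to control $\delta^{*}$ through a uniform empirical-process bound over the class $\cF_{\eta}(\delta^{*})$. The variance inequality (A2) immediately gives $l(x_n, x^{*}) \leq \sqrt{K}\,(\delta^{*})^{\beta/2}$, so it suffices to show $\delta^{*} \lesssim (\sigma_{X}/\sqrt{n})^{2/(2-\beta)}$ with the correct explicit constant. Since $x_n$ minimizes the empirical risk, $P_n(\eta(x_n, \cdot) - \eta(x^{*}, \cdot)) \leq 0$; the centering built into $f_{\eta}$ yields $(P - P_n)(\eta(x, \cdot) - \eta(x^{*}, \cdot)) = (P - P_n)\,f_{\eta}(x, \cdot)$. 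Adding these,
$\delta^{*} \leq (P - P_n)\,f_{\eta}(x_n, \cdot) \leq Z_n(\delta^{*})$, where $Z_n(\delta) := \sup_{x \in \cM_{\eta}(\delta)}|(P - P_n)\,f_{\eta}(x, \cdot)|$, and $x_n \in \cM_{\eta}(\delta^{*})$ by construction.

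Next I would bound $Z_n(\delta)$ uniformly in $\delta$ using the envelope $H_{\delta, \eta}$ produced by \eqref{env-bound}. The triangle inequality together with Cauchy--Schwarz gives $\|H_{\delta, \eta}\|_{2, P} \leq 4\sqrt{K\delta^{\beta}}\,\sigma_{X}$. Under (B1), $\cF_{\eta}(\delta)$ is a VC-type class with index $D$ and envelope $H_{\delta, \eta}$ in the data-dependent norm $\|\cdot\|_{2, P_n}$, so a symmetrization step followed by Dudley's entropy chaining bounds the expectation of $Z_n(\delta)$ by a constant multiple of $\sqrt{AD}\cdot\sqrt{K\delta^{\beta}}\,\sigma_{X}/\sqrt{n}$; keeping track of the explicit chaining constants yields the prefactor $24\sqrt{AD}$. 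For the deviation from the mean I would apply Chebyshev's inequality via the variance estimate $\operatorname{Var}(Z_n(\delta)) \lesssim \sigma_{\eta}^{2}(\delta)/n \lesssim K\delta^{\beta}\sigma_{X}^{2}/n$ inherited from the envelope, producing the polynomial term $\sqrt{2/\Delta}$. Combining, with probability $\geq 1 - \Delta$,
$Z_n(\delta) \leq 32\,(24\sqrt{AD} + \sqrt{2/\Delta})\sqrt{K\delta^{\beta}}\,\sigma_{X}/\sqrt{n}$.

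Closing the loop via $\delta^{*} \leq Z_n(\delta^{*})$ gives $(\delta^{*})^{1 - \beta/2} \leq 32\,(24\sqrt{AD} + \sqrt{2/\Delta})\sqrt{K}\,\sigma_{X}/\sqrt{n}$, and substituting back into $l(x_n, x^{*}) \leq \sqrt{K}\,(\delta^{*})^{\beta/2}$ yields the stated inequality after routine algebra.

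The main obstacle will be converting the chaining estimate into a tail bound that remains valid when evaluated at the random radius $\delta = \delta^{*}$; the natural remedy is a peeling/slicing argument over dyadic scales of $\delta$ (combined with the monotonicity of $Z_n(\delta)$ in $\delta$), with the extra logarithmic factors absorbed into the explicit constant $32$. A second subtle point is that the covering estimate in (B1) is in the empirical pseudo-metric $\|\cdot\|_{2, P_n}$ whereas the variance estimate lives in $\|\cdot\|_{2, P}$; this is reconciled by symmetrizing before chaining so that the random entropy bound applies directly to the Rademacher process. The reason Chebyshev's inequality is forced upon us, rather than a Bernstein- or Talagrand-type concentration bound, is that no uniform essential-sup bound on $f_{\eta}$ is available under only the second-moment hypothesis $\sigma_{X}^{2} < +\infty$; this is precisely what drives the polynomial dependence on $1/\Delta$ and motivates the geometric-median-of-means construction developed in the next section.
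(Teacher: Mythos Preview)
Your proposal is correct and follows essentially the same route as the paper's proof: reduce via (A2) to the excess risk $\delta_n$, use the basic localization $\delta_n \le \phi_n(\delta_n)$, bound $\mathbb{E}\,\phi_n(\delta)$ by symmetrization plus Dudley chaining under (B1), add a second-moment deviation term, and then peel to pass from fixed $\delta$ to the random $\delta_n$. The paper implements the deviation step with a Nemirovski--type moment inequality for suprema (their Lemma~S.1, from \cite{lederer2014new}) rather than Efron--Stein plus Chebyshev, and handles the random-$\delta$ issue via Theorem~4.3 of \cite{koltchinskii2011oracle}; because $\bar\sigma(\delta)/\delta$ is decreasing in $\delta$, that peeling step produces \emph{no} logarithmic factor whatsoever, so your constant $32$ has nothing extra to absorb.
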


In the case where $\cM$  is an NPC space to be introduced in the next section, choosing $\eta=d^{2}$ gives $l=d$ and $K=\beta=1$, see \cref{Subsection 4.1}.
In this case, \cref{thm_fin_conv} provides an upper bound of order $\sigma_{X}/\sqrt{n \Delta}$ for $d(x_{n},x^*)$.
Note that, in the trivial case where $\cM=\mbR^D$ with $d(x,y)=|x-y|$, an application of the Chebyshev inequality gives
\[
\mathbb{P}\left(|x_n-x^*|\le \frac{\sigma_X}{\sqrt{n\Delta}}\right) \ge 1-\Delta.
\]
Here and throughout this paper, $|\cdot|$ denotes the Euclidean norm. The extra factor $C_\Delta$ in \cref{thm_fin_conv} is a price we pay for the complexity of $\cM$ to deal with general metric spaces.
The following theorem is for infinite-dimensional scenarios with the assumption (B2).

\begin{theorem}\label{thm_inf_conv}
Assume (A1), (A2) and (B2), and let $(K,\beta)$ and $(A,\zeta)$ be the constant pairs that appear in (A2) and (B2), respectively.
Then, there is a universal constant $C_{A,\zeta}$ depending only on $A>0$ and $\zeta >0$ such that, for all $n\in \mathbb{N}$ and $\Delta \in (0,1)$,
\begin{align*}
l(x_{n},x^*)
\leq
\begin{cases}
\displaystyle K^{\frac{1}{2-\beta}}\left(C_{A,\zeta} \cdot \frac{1}{n^{1/2}} \cdot \frac{\sigma_{X}}{\sqrt{\Delta}} \right)^{\frac{\beta}{2-\beta}}, \quad &\mbox{if}\;\;0<\zeta <1\\ %%%
\quad \\[-6mm]
\displaystyle K^{\frac{1}{2-\beta}}\left(C_{A,1} \cdot \frac{\log n}{n^{1/2}} \cdot \frac{\sigma_{X}}{\sqrt{\Delta}} \right)^{\frac{\beta}{2-\beta}}, \quad &\text{if}\;\;\zeta = 1\\ %%%
\quad \\[-6mm]
\displaystyle K^{\frac{1}{2-\beta}}\left(C_{A,\zeta}\cdot \frac{1}{n^{1/2\zeta}} \cdot \frac{\sigma_{X}}{\sqrt{\Delta}}\right)^{\frac{\beta}{2-\beta}}, \quad &\text{if}\;\;\zeta>1\\ %%%
\end{cases}
\end{align*}
with probability at least $1-\Delta$.
\end{theorem}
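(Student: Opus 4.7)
The plan is to mirror the reduction underlying \cref{thm_fin_conv} while replacing the VC-type chaining bound by one tailored to the entropy growth rate in (B2). The variance inequality (A2) gives $l(x_{n}, x^{*}) \leq \sqrt{K}\,\delta_{n}^{\beta/2}$, where $\delta_{n} := \int_{\cM}\bigl(\eta(x_{n},y)-\eta(x^{*},y)\bigr)\,\rd P(y)$ is the population excess risk, so it suffices to bound $\delta_{n}$. Since $x_{n}$ minimizes $x\mapsto P_{n}\eta(x,\cdot)$ and $Pf_\eta(x,\cdot)=0$ by construction, the identity $\eta(x,y)-\eta(x^{*},y)=f_\eta(x,y)+P\eta(x,\cdot)-P\eta(x^{*},\cdot)$ together with $P_n(\eta(x_n,\cdot)-\eta(x^*,\cdot))\leq 0$ produces the self-bounding chain
\[
\delta_{n} \;\leq\; (P-P_{n})f_\eta(x_{n},\cdot) \;\leq\; G(\delta_{n}), \qquad G(\delta) := \sup_{f\in\cF_\eta(\delta)} |(P-P_{n})f|,
\]
because $f_\eta(x_{n},\cdot)\in\cF_\eta(\delta_{n})$ by definition. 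This reduces the theorem to a uniform control of the empirical process $G(\delta)$.

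I would control the expectation $\mathbb{E}[G(\delta)]$ by Dudley's chaining against the envelope $H_{\delta,\eta}$ from \eqref{env-bound}. Applying Cauchy--Schwarz to $\int d(y,z)\,\rd P(z)$ and using $\mathbb{E}[d(Y,Z)^{2}]\leq 4\sigma_{X}^{2}$ for $Y,Z$ i.i.d. from $P$ yields $\|H_{\delta,\eta}\|_{2,P}\leq 4\sqrt{K}\,\delta^{\beta/2}\sigma_{X}$. Under (B2) the entropy integrand satisfies $\sqrt{\log N(\tau\|H_{\delta,\eta}\|_{2,P_{n}},\cF_\eta(\delta),\|\cdot\|_{2,P_{n}})}\leq \sqrt{A}\,\tau^{-\zeta}$, so truncated chaining at a free level $\tau_{0}\in(0,1]$ gives
\[
\mathbb{E}[G(\delta)] \;\leq\; C\,\|H_{\delta,\eta}\|_{2,P}\left(\tau_{0} \;+\; \frac{1}{\sqrt{n}} \int_{\tau_{0}}^{1} \sqrt{A}\,\tau^{-\zeta}\,\rd\tau\right).
\]
The three regimes of the theorem correspond to distinct choices of $\tau_{0}$: letting $\tau_{0}\to 0$ works when $\zeta<1$ and produces a $n^{-1/2}$ rate; the choice $\tau_{0}=n^{-1/2}$ is forced when $\zeta=1$ and introduces a $\log n$ factor from the integral $\int_{1/\sqrt n}^{1}\tau^{-1}\rd\tau$; and $\tau_{0}=n^{-1/(2\zeta)}$ is the unique balance when $\zeta>1$, producing the slower polynomial rate $n^{-1/(2\zeta)}$.

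To upgrade from expectation to the high-probability statement with the advertised $1/\sqrt{\Delta}$ dependence, I would invoke Chebyshev's inequality on $G(\delta)$. An Efron--Stein or Bousquet-type variance estimate yields $\mathrm{Var}(G(\delta)) \lesssim \sigma_\eta^{2}(\delta)/n$, and the envelope bound gives $\sigma_\eta^{2}(\delta)\leq \|H_{\delta,\eta}\|_{2,P}^{2}\leq 16K\delta^{\beta}\sigma_{X}^{2}$. Combining the chaining estimate with the variance bound produces $G(\delta) \leq \sqrt{K}\,\delta^{\beta/2}\, r(n,\Delta,\zeta)$ with probability at least $1-\Delta$, where $r(n,\Delta,\zeta)$ is precisely the three-branch prefactor of the form $C_{A,\zeta}\cdot n^{-\alpha(\zeta)}\cdot \sigma_{X}/\sqrt{\Delta}$ (with $\alpha(\zeta)=1/2$, $\log n\cdot n^{-1/2}$, or $n^{-1/(2\zeta)}$ respectively) appearing in the theorem statement.

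The proof then concludes by solving the self-bounding inequality $\delta_{n} \leq \sqrt{K}\,\delta_{n}^{\beta/2}\,r(n,\Delta,\zeta)$, which gives $\delta_{n}\leq K^{1/(2-\beta)} r(n,\Delta,\zeta)^{2/(2-\beta)}$, and substituting back into (A2) via $l(x_{n},x^{*})\leq \sqrt{K}\,\delta_{n}^{\beta/2}$ produces the three branch bounds. I expect the main obstacle to be the $\zeta\geq 1$ regime, where Dudley's integral diverges: the chaining must be truncated at an $n$-dependent scale and the resulting discretization residual controlled by the envelope, rather than invoking an off-the-shelf uniform-entropy bound. This is where the phase transition at $\zeta=1$ (the $\log n$ factor) and the slower rate $n^{-1/(2\zeta)}$ for $\zeta>1$ originate, and where the universal constant $C_{A,\zeta}$ acquires its joint dependence on $A$ and $\zeta$.
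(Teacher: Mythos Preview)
Your overall strategy matches the paper's: reduce to bounding the localized empirical process $G(\delta)$, control its expectation via truncated Dudley chaining under (B2) (this is exactly where the three regimes in $\zeta$ arise, and your choices of truncation level $\tau_0$ are the right ones), pass to a tail bound giving the $1/\sqrt{\Delta}$ dependence, and then close the fixed-point inequality through (A2). The paper invokes a polynomial tail inequality for suprema of empirical processes (\cref{Lemma1}) in place of your Chebyshev/Efron--Stein route, but the effect is the same.

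There is, however, a genuine gap in your closure step. You obtain $G(\delta)\le \sqrt{K}\,\delta^{\beta/2} r(n,\Delta,\zeta)$ with probability $1-\Delta$ for a \emph{fixed} $\delta$, and then substitute the \emph{random} $\delta_n$ to get $\delta_n\le \sqrt{K}\,\delta_n^{\beta/2} r(n,\Delta,\zeta)$. That substitution is not licensed: the event on which the bound holds depends on $\delta$, and $\delta_n$ is measurable with respect to the sample. What is missing is a peeling (slicing) argument that converts the family of fixed-$\delta$ bounds into a single bound on $\delta_n$. The paper handles this by citing Theorem~4.3 of \cite{koltchinskii2011oracle}, exploiting that $\phi_n(\delta)$ is nondecreasing in $\delta$ and the deviation bound $b_n(\delta,\Delta)$ is decreasing in $\Delta$, to conclude $\delta_n\le b_n(\Delta)=\inf\{\tau>0:\sup_{\delta\ge\tau}\delta^{-1}b_n(\delta,\Delta^{\delta/\tau})\le 1\}$. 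Because $\bar\sigma(\delta)/\delta\propto \delta^{\beta/2-1}$ is decreasing, this infimum is attained where $b_n(\tau,\Delta)=\tau$, recovering exactly the fixed-point solution you wrote down. So your final formula is correct, but you need the peeling device to justify plugging the random radius into the fixed-$\delta$ concentration bound.
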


An explicit form of the constant $C_{A,\zeta}$ in \cref{thm_inf_conv} may be found in the proof of the theorem in \cref{proofs-Sec3}. %of the Supplementary Material.
The theorem demonstrates that the consistency of the empirical Fr\'echet mean $x_{n}$ continues to hold for infinite-dimensional $(\mathcal{M},d)$, but with slower rates of convergence to $x^*$ for increasing $n$ when $\zeta \ge 1$,
compared to the finite-dimensional case in \cref{thm_fin_conv}. It shows that, for infinite-dimensional geodesic spaces $\cM$, decreasing the curvature of $\cM$ results in slowing down the rate of convergence of $x_{n}$ to $x^*$
since the curvature complexity {\it $\zeta$ gets larger as the curvature decreases}.
This implies that the rate is slower for $\cM$ with non-positive curvature than with positive curvature.
We note that, for the finite-dimensional case, the rate of convergence of $x_{n}$ does not depend on the curvature, as is shown in
\cref{thm_fin_conv}. The constant $A$ in $C_\Delta$, however, gets larger as the curvature of $\mathcal{M}$ decreases
in the case where $\cM$ is a Riemannian manifold and $\eta=d^{2}$, see \cref{Subsection 3.2.3}.

\cref{thm_fin_conv,thm_inf_conv} reveal that the empirical Fr\'echet mean achieves only polynomial concentration speeds.
In \cref{Sec_MoM} we discuss in depth alternative estimators that have exponential speeds, basically replacing $1/\Delta$ by $\log(1/\Delta)$ in the concentration inequalities.

%%%%%%%%%%%%%%%%%%%%%%%%%%%%%%%%%%%%%%%%%%%%%%%%%%%%%%%%%%%%%%%%%%%%
\section{Consideration of assumptions}\label{Section 4}

In this section, we discuss the validity of the assumptions (A1), (A2), (B1) and (B2) for non-positive curvature (NPC) spaces.
We also derive generalized versions of the CN and variance inequalities.

\begin{definition}\label{NPC-sp}
A Polish space $(\mathcal{M},d)$ is called an (global) NPC space if for any $x_{0},x_{1} \in \mathcal{M}$, there exists $y  \in \mathcal{M}$ such that
\begin{equation*}
d(z,y)^{2} \leq \frac{1}{2}d(z,x_0)^{2}+\frac{1}{2}d(z,x_1)^{2}-\frac{1}{4}d(x_0,x_1)^{2}, \quad z \in \mathcal{M}.
\end{equation*}
\end{definition}

\begin{example}\label{hilb_example3}
{\rm Any Hilbert space $(\mathcal{H},\langle\cdot,\cdot\rangle)$ is an NPC space: for any $x_{0},x_{1},z \in \mathcal{M}$
\begin{equation*}
\begin{split}
\frac{1}{2}d(z,x_0)^{2}+\frac{1}{2}d(z,x_1)^{2}-\frac{1}{4}d(x_0,x_1)^{2}
&=\frac{1}{4} \left(2\|z-x_0\|^{2}+2\|z-x_1\|^{2}-\|(z-x_0)-(z-x_1)\|^{2} \right) \\
&=\frac{1}{4} \, \|(z-x_0)+(z-x_1)\|^{2}\\
&= d\left(z,\frac{x_0+x_1}{2}\right)^{2}. \quad \mbox{\qed}
\end{split}
\end{equation*}}
\end{example}

Throughout this section, $\cM$ is an NPC space. Also, when there is no confusion, with an abuse of terminology, `Riemannian manifold' means a smooth, complete and connected finite-dimensional Riemannian manifold.
By the Hopf-Rinow Theorem, such a Riemannian manifold is geodesically complete.

\subsection{Common choice \texorpdfstring{$\eta=d^{2}$}{: squared distance}}\label{Subsection 4.1}

Let us first discuss some properties of NPC spaces when $\eta(x,y)=d(x,y)^{2}$.
The geometry of metric measure spaces with non-positive curvature is mainly developed by Sturm \cite{sturm2003probability}.
Note that the existence and uniqueness of the Fr\'echet mean for any probability measure are guaranteed for such spaces.

We have seen in \cref{hilb_example1} that, for Hilbert spaces, the inner product structure allows us to easily verify (A1) and the equality in (A2)
with $l=d$, $K=\beta=1$.
For curved spaces, however, $d(x,y)^{2}-d(x^*,y)^{2}$ cannot be expressed nicely, thus our assumptions (A1) and (A2) may not be easy to check.
For example, for Riemannian manifolds $\mathcal{M}$, the relationship between the embedded distance $\|\log_{p}x-\log_{p}y\|$  for $p,x,y \in \mathcal{M}$
and the original distance $d(x,y)$ depends considerably on the curvature, see \cref{remark-cond-AB} below.
Nevertheless, using the fact that the geodesic deviation accelerates as two geodesics move further away from the origin, one may prove
the following inequalities for global NPC spaces $\mathcal{M}$, see \cite{sturm2003probability}
for details.

\begin{itemize}
\item[] {\it CN inequality}: For any $y \in \mathcal{M}$ and for any geodesic $\gamma:[0,1]\rightarrow \mathcal{M}$,
\[
d(\gamma_{t},y)^{2} \leq (1-t) d(\gamma_{0},y)^{2}+t\, d(\gamma_{1},y)^{2} -t(1-t) d(\gamma_{0},\gamma_{1})^{2}, \quad t \in [0,1].
\]
\item[] {\it Quadruple inequality}: For any $y,z,p,q \in \mathcal{M}$,
\[
d(y,p)^{2}-d(z,p)^{2}-d(y,q)^{2}+d(z,q)^{2} \leq 2d(y,z)d(p,q).
\]
\item[] {\it Variance inequality}: For any $x \in \mathcal{M}$ and for any $P \in \mathcal{P}_{2}(\mathcal{M})$,
\[
d\left(x, x^{*}\right)^{2} \leq \int_{\cM}\left(d(x, y)^{2}-d\left(x^{*}, y\right)^{2}\right) \mathrm{d} P(y).
\]
\end{itemize}
Here, `CN' stands for Courbure N\'egative in French. Therefore, not only for Hilbert spaces but also for NPC spaces,
our assumptions (A1) and (A2) are satisfied with $K=\beta=1$, $l=d$ for the usual choice $\eta(x,y)=d(x,y)^{2}$.

\begin{remark}\label{remark-cond-AB}
We note that $\eta=d^{2}$ satisfies the Hamilton-Jacobi equation, see (14.29) in \cite{villani2009optimal},
and the homogeneous Taylor polynomial of order 4 for $\eta$ gives the following formula: for any $p \in \mathcal{M}$ and $v,w \in T_{p}\mathcal{M}$,
\begin{equation*}
     d\left(\exp_{p}(tv), \exp_{p}(tw)\right)^{2}=\|v-w\|^{2} \cdot t^{2}-\frac{1}{3} \operatorname{Riem}(v, w, w, v) \cdot t^{4}+O(t^{5}),
\end{equation*}
where `$\operatorname{Riem}$' stands for the Riemannian curvature tensor.
\end{remark}

\subsection{Cases with \texorpdfstring{$\eta=d^\alpha$}{the power transform metric}}\label{Subsection 4.2}

Here, we consider the choice $\eta=d^{\alpha}$, or equivalently $\eta=d_\alpha^2$ with $d_\alpha=d^{\alpha/2}$, for $\alpha \in (1,2]$.
We note that the Fr\'echet mean $x^*$ corresponding to $\alpha=1$ is analogous to the conventional median for $\mathcal{M}=\mathbb{R}$, thus is often called {\it Fr\'echet median}.
We exclude the case $\alpha=1$ in our discussion, however, for the reason to be given shortly. We also note that $d_\alpha$ is a metric for $\alpha \in (1,2]$,
and is often called {\it power transform metric}. The associated Fr\'echet mean is called \textit{$\alpha$-power Fr\'echet mean}.
With a slight abuse of notation we continue to denote it by $x^*$ throughout this paper.

\cref{Fermatfigure} illustrates the $\alpha$-power Fr\'echet means for several $\alpha \in [1,2]$ when $\cM=\mathbb{R}^2$, $d(x,y)=|x-y|$
and $P$ has the equal probability mass $1/3$ at three points $a_1=(0,h), \, a_2=(-\sqrt{3},0), \, a_3=(\sqrt{3},0)$.
The right panel depicts $t$ in $x^*=(0,t)$ as a function of $h$.
For $\alpha=2$, $x^*=(a_1+a_2+a_3)/3=(0,h/3)$ becomes most sensitive to the change of $a_1=(0,h)$ from a certain point on the scale of $h$.
For $\alpha=1$, $x^*=\argmin_{x \in \mathbb{R}^{2}} \overline{x a_{1}}+\overline{x a_{2}}+\overline{x a_{3}}$, known as the \textit{Fermat point}, is invariant
for $h\ge 1$. As the cases $\alpha=1.1$ and $\alpha=1.5$ demonstrate, $x^*$ for $\alpha \in (1,2)$ is resistent to outlying $a_1=(0,h)$ to a certain extent depending on $\alpha$:
the smaller $\alpha$ is, the more it resists.

\cref{Fermatfigure} also indicates that all $\alpha$-power Fr\'echet means for different values of $\alpha$ meet at $(0,1)$ when $a_1=(0,3)$. This is not a coincidence.
\cref{prop_power_mean_polygon} in the Supplementary Material shows
that, if the underlying probability measure $P$ is invariant under rotation around a point $z$,
then $z$ is the unique $\alpha$-power Fr\'echet mean for all $\alpha \ge 1$.

\begin{figure}%[ht!]
\centering
\includegraphics[width=14cm,height=6cm]{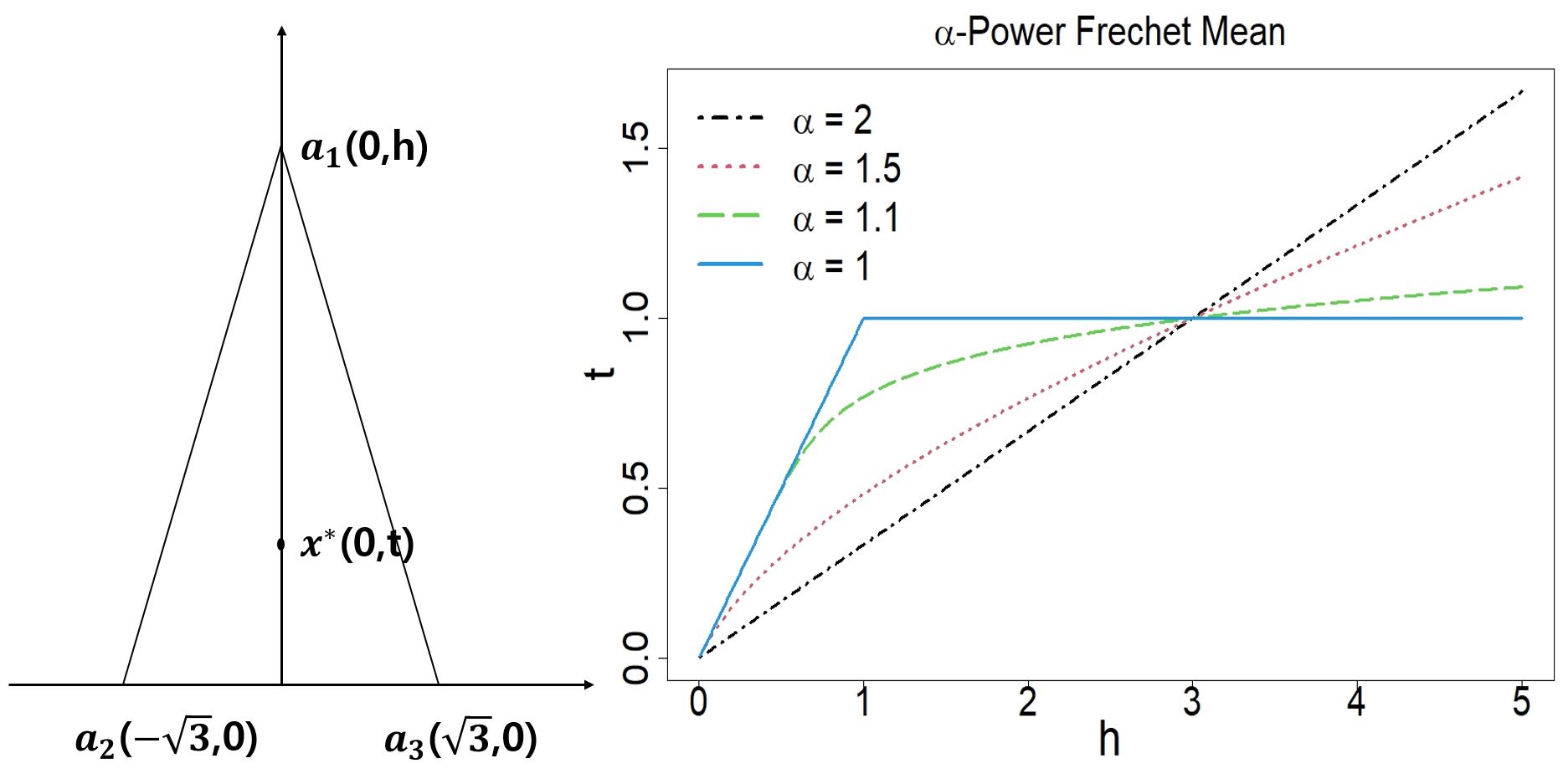}
\caption{The left panel depicts the positions of the $\alpha$-power Fr\'echet mean $x^*$ and the three points $a_1, \, a_2, \, a_3$ having equal mass.
The right panel shows the change of $x^*$ as $a_1$ moves with $a_2$ and $a_3$ staying fixed, for $\alpha=1/1.1/1.5/2$ (solid/dashed/dotted/dot-dashed).}
\label{Fermatfigure}
\end{figure}

The rates of convergence for $\alpha$-power Fr\'echet means are studied for NPC spaces with $\alpha \in [1,2]$ in Sch\"otz \cite{schotz2019convergence}.
In the latter work it is proved that the assumption (A1) holds with $l(\cdot,\cdot)=\alpha2^{-\alpha+1}d(\cdot,\cdot)^{\alpha-1}$: for any $y,z,p,q \in \mathcal{M}$,
\begin{equation}\label{power-quad-ineq}
d(y,p)^{\alpha}-d(z,p)^{\alpha}-d(y,q)^{\alpha}+d(z,q)^{\alpha} \leq \alpha2^{-\alpha+2}d(y,z)^{\alpha-1}d(p,q), \quad \alpha \in [1,2].
\end{equation}
Moreover, according to Appendix E in Sch\"otz \cite{schotz2019convergence},
no growth function satisfying (A1) exists for $\alpha>2$ and $0 < \alpha <1$.
For $\alpha=1$, (\ref{power-quad-ineq}) implies (A1) with the growth function $l(y,z)=I(y \neq z)$, but with this the assumption (A2) makes no sense,
so that \cref{thm_fin_conv,thm_inf_conv} are not meaningful for $\eta=d$.
For the case where $\alpha=1$, Ba\u c\'ak \cite{bacak2014computing} provided some results analogous to \cref{thm_fin_conv,thm_inf_conv}. Ba\u c\'ak \cite{bacak2014convex, bacak2018old} also introduced stochastic proximal point algorithms (PPA) to compute Fr\'echet medians in NPC spaces.

In the next two propositions we derive generalized CN and variance inequalities for $\alpha \in (1,2]$. Thus, the theorems
in \cref{Sec_Empirical means} remain valid for $\alpha$-power Fr\'echet means as well.

\begin{proposition}[Power transform CN inequality]\label{thm_gen_CN}
Let $\gamma:[0,1] \rightarrow \mathcal{M}$ be a geodesic and $\alpha \in [1,2]$.
Then, it holds that, for any $\delta \geq 0$, $t \in [0,1]$ and $z \in \mathcal{M}$,
\begin{align*}
d(\gamma_{t},z)^{\alpha} &\le (1+\delta)^{1-\alpha/2} \left[(1-t)^{\alpha/2} d(\gamma_{0},z)^{\alpha}+t^{\alpha/2} d(\gamma_{1},z)^{\alpha} \right]\\
&\hspace{2cm} - \delta^{1-\alpha/2} \left[t(1-t)d(\gamma_{0}, \gamma_{1})^{2}\right]^{\alpha/2}.
\end{align*}
\end{proposition}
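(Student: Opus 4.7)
The plan is to reduce the power-transform CN inequality to the classical CN inequality stated in \cref{Subsection 4.1} via a Hölder-type rearrangement. Set $p := \alpha/2 \in [1/2,1]$ and abbreviate $a = d(\gamma_0,z)^2$, $b = d(\gamma_1,z)^2$, $c = d(\gamma_0,\gamma_1)^2$, $y = d(\gamma_t,z)^2$. The classical CN inequality for NPC spaces yields $y \le (1-t)a + tb - t(1-t)c$. Writing $S := (1-t)a + tb$ and $C := t(1-t)c$, this is $y + C \le S$, so in particular $C \le S - y \le S$. Monotonicity of $x \mapsto x^p$ on $[0,\infty)$ then gives $y^p \le (S-C)^p$.

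The key step is to introduce the free parameter $\delta$ by applying Hölder's inequality with conjugate exponents $1/p$ and $1/(1-p)$ to the pair of vectors $\bigl((S-C)^p, C^p\bigr)$ and $\bigl(1, \delta^{1-p}\bigr)$:
\begin{equation*}
(S-C)^p \cdot 1 + C^p \cdot \delta^{1-p} \;\le\; \bigl((S-C)+C\bigr)^p \bigl(1 + \delta\bigr)^{1-p} \;=\; (1+\delta)^{1-p} S^p.
\end{equation*}
Combined with $y^p \le (S-C)^p$, this already gives $y^p + \delta^{1-p} C^p \le (1+\delta)^{1-p} S^p$. It remains to replace $S^p$ on the right by its subadditive upper bound: since $p \in (0,1]$, the map $x \mapsto x^p$ is concave with $0 \mapsto 0$, hence subadditive, so
\begin{equation*}
S^p = \bigl((1-t)a + tb\bigr)^p \le ((1-t)a)^p + (tb)^p = (1-t)^{\alpha/2} d(\gamma_0,z)^\alpha + t^{\alpha/2} d(\gamma_1,z)^\alpha.
\end{equation*}
Substituting back and identifying $y^p = d(\gamma_t,z)^\alpha$ and $\delta^{1-p} C^p = \delta^{1-\alpha/2} [t(1-t) d(\gamma_0,\gamma_1)^2]^{\alpha/2}$ produces the stated inequality.

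Sanity checks confirm the form of the bound: at $\alpha = 2$ (so $p=1$) the Hölder step is vacuous with the convention $\delta^0 = 1$ and we recover the classical CN inequality directly; at $\delta = 0$ with $\alpha < 2$, the correction term vanishes and the right-hand side collapses to the pure subadditive bound $(1-t)^{\alpha/2} d(\gamma_0,z)^\alpha + t^{\alpha/2} d(\gamma_1,z)^\alpha$. I do not foresee a serious obstacle here: each step is a direct application of a standard inequality (classical CN, Hölder, concavity), and all the work is in the choice of pairing in Hölder that manufactures the $(1+\delta)^{1-\alpha/2}$ factor and isolates the geodesic-deviation term with coefficient $\delta^{1-\alpha/2}$. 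The only mildly delicate point is the $\alpha = 2$ boundary, which is handled by the convention $0^0 = 1$ so that the proof goes through uniformly for $\alpha \in [1,2]$.
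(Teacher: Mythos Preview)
Your proof is correct and is essentially the same as the paper's: both combine the classical CN inequality, H\"older's inequality in the two-term form $(1+\delta)^{1-p}(u+v)^{p}\ge u^{p}+\delta^{1-p}v^{p}$, and subadditivity of $x\mapsto x^{p}$ for $p\in(0,1]$. The only cosmetic difference is the order: the paper applies subadditivity first, then H\"older with the split $(y,S-y)$, and finishes with CN to bound $S-y\ge C$, whereas you apply CN first, then H\"older with the split $(S-C,C)$, and finish with subadditivity.
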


Our result in \cref{thm_gen_CN} reduces to the CN inequality in
\cref{Subsection 4.1} when $\alpha=2$. It is believed to be a sharp generalization since it is derived from the CN inequality in \cref{Subsection 4.1}
and a version of H\"older's inequality, both of which are sharp.
When given three points $x,y,z \in \mathcal{M}$, \cref{thm_gen_CN} enables us to get an upper bound for the power transform metric $\eta(\cdot, z)=d(\cdot, z)^{\alpha}$
along the geodesic from $x$ to $y$, which does not seem to be feasible for general $\eta$.
We will illustrate how to use this inequality in a concrete way in the proof of the following proposition, and also in the proofs of the concentration inequalities given in
\cref{thm_fin_power_mom,thm_inf_power_mom} later in \cref{Section 5.2}.

To state the second proposition, for $\alpha>0$ we let
\[
\mathcal{P}_\alpha(\mathcal{M}):= \Big\{P \in \mathcal{P}: \int_{\mathcal{M}}d(x,y)^\alpha \rd P(y)<+\infty\,\mbox{ for some }\, x \in  \mathcal{M}\Big\}.
\]
For $P \in \mathcal{P}_\alpha(\mathcal{M})$, define $F_{\alpha}(\cdot)= \int_{\mathcal{M}} d(\cdot,y)^{\alpha}\mathrm{d} P(y)$ and
\begin{equation*}
b_{\alpha}(x)=\sup_{t \in (0,1]} \frac{F_{\alpha}(\gamma^{x}_{t})-\left\{t^{\alpha/2}+(1-t)^{\alpha/2}\right\} F_{\alpha}(x^*)}{t^{\alpha/2} d(x,x^*)^{\alpha}},
\quad x \in \mathcal{M}\setminus\{x^*\},
\end{equation*}
where $\gamma^{x}:[0,1] \rightarrow \mathcal{M}$ is the geodesic from $x^*$ to $x$.

\begin{proposition}[Power transform variance inequality]\label{prop_gen_var_ineq}
Let $\alpha \in [1,2]$ and $P \in \mathcal{P}_\alpha(\mathcal{M})$. If $b_{\alpha}(x)>0$, then
\begin{equation*}
d\left(x, x^{*}\right)^{\alpha} \leq \frac{1}{b_{\alpha}(x)} \int_{\cM}\left(d(x, y)^{\alpha}-d\left(x^{*}, y\right)^{\alpha}\right) \mathrm{d} P(y),
\quad x \in \mathcal{M}\setminus\{x^*\}.
\end{equation*}
Therefore, if $B_{\alpha}:=\inf_{x \in \mathcal{M}\setminus\{x^*\}} b_{\alpha}(x)>0$, then for any $x \in \mathcal{M}$,
\begin{equation*}
d\left(x, x^{*}\right)^{\alpha} \leq \frac{1}{B_{\alpha}} \int_{\cM}\left(d(x, y)^{\alpha}-d\left(x^{*}, y\right)^{\alpha} \right)\mathrm{d} P(y).
\end{equation*}
\end{proposition}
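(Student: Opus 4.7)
The plan is to derive the variance inequality as a direct consequence of the power transform CN inequality established in the preceding proposition. The definition of $b_{\alpha}(x)$ is tailored so that the variance inequality amounts to showing that the supremum defining $b_{\alpha}(x)$ is dominated by the quotient $[F_{\alpha}(x)-F_{\alpha}(x^*)]/d(x,x^*)^{\alpha}$, which is precisely the value of the ratio at $t=1$.

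I apply the power transform CN inequality to the geodesic $\gamma^{x}:[0,1]\to\cM$ from $x^*$ to $x$ with the degenerate choice $\delta=0$. For $\alpha\in[1,2)$ one has $\delta^{1-\alpha/2}=0$ and $(1+\delta)^{1-\alpha/2}=1$, so the bound collapses to
\begin{equation*}
d(\gamma^{x}_{t},z)^{\alpha} \le (1-t)^{\alpha/2} d(x^*,z)^{\alpha} + t^{\alpha/2} d(x,z)^{\alpha}, \qquad z\in\cM.
\end{equation*}
For $\alpha=2$ the same pointwise bound follows from the standard CN inequality after dropping its non-positive mixed term. Integrating both sides against $P(\rd z)$ gives
\begin{equation*}
F_{\alpha}(\gamma^{x}_{t}) \le (1-t)^{\alpha/2} F_{\alpha}(x^*) + t^{\alpha/2} F_{\alpha}(x), \qquad t\in[0,1],
\end{equation*}
and subtracting $\{t^{\alpha/2}+(1-t)^{\alpha/2}\}F_{\alpha}(x^*)$ from both sides yields
\begin{equation*}
F_{\alpha}(\gamma^{x}_{t}) - \bigl\{t^{\alpha/2}+(1-t)^{\alpha/2}\bigr\}F_{\alpha}(x^*) \le t^{\alpha/2}\bigl[F_{\alpha}(x)-F_{\alpha}(x^*)\bigr].
\end{equation*}

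For $x\neq x^*$ and $t\in(0,1]$, I divide by $t^{\alpha/2}d(x,x^*)^{\alpha}>0$ and take the supremum over $t$ on the left-hand side; by the very definition of $b_{\alpha}(x)$ this produces
\begin{equation*}
b_{\alpha}(x) \le \frac{F_{\alpha}(x)-F_{\alpha}(x^*)}{d(x,x^*)^{\alpha}}.
\end{equation*}
Under the hypothesis $b_{\alpha}(x)>0$, rearrangement gives the first stated inequality. For the second claim, $b_{\alpha}(x)\ge B_{\alpha}>0$ applied uniformly in $x\neq x^*$ delivers
\begin{equation*}
d(x,x^*)^{\alpha} \le \frac{1}{b_{\alpha}(x)}\int_{\cM}\bigl(d(x,y)^{\alpha}-d(x^*,y)^{\alpha}\bigr)\,\rd P(y) \le \frac{1}{B_{\alpha}}\int_{\cM}\bigl(d(x,y)^{\alpha}-d(x^*,y)^{\alpha}\bigr)\,\rd P(y),
\end{equation*}
while the case $x=x^*$ is trivial since both sides vanish.

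The only real subtlety is the interpretation of $\delta^{1-\alpha/2}$ at $\delta=0,\alpha=2$, where the expression is formally $0^{0}$; this is circumvented by handling the endpoint $\alpha=2$ through the standard CN inequality, which reproduces the same pointwise bound once the non-positive term is discarded. Beyond this, the argument is entirely algebraic once the power transform CN inequality is in hand, and no additional geometric input is required.
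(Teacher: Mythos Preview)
Your proof is correct and follows essentially the same approach as the paper: both apply the power transform CN inequality with $\delta=0$ to the geodesic $\gamma^{x}$, integrate against $P$, and compare with the definition of $b_{\alpha}(x)$. The only cosmetic difference is that the paper extracts the supremum via an $\varepsilon$-approximation argument, whereas you divide through by $t^{\alpha/2}d(x,x^*)^{\alpha}$ and take the supremum directly on the left-hand side (the right-hand side being $t$-independent); your version is in fact slightly cleaner, and your explicit handling of the $0^{0}$ issue at $\alpha=2$ is a nice touch the paper leaves implicit.
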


\cref{prop_gen_var_ineq} tells that, in order to establish the power transform variance inequality,
it suffices to check that, for all $x \in \mathcal{M}\setminus\{x^*\}$, $F_{\alpha}(\gamma^{x}_{t})$ gets apart from $(t^{\alpha/2}+(1-t)^{\alpha/2}) F_{\alpha}(x^*)$ by more than a positive constant multiple of $t^{\frac{\alpha}{2}} d(x,x^*)^{\alpha}$, at some point $\gamma^{x}_{t}$
along the geodesic from $x^*$ to $x$.
Note that $F_{\alpha}(x^*)=\inf_{x \in \cM}\,F_\alpha(x)$ and
$t^{\alpha/2}+(1-t)^{\alpha/2} \ge 1$ for all $t \in [0,1]$.
For the common choice $\eta=d^{2}$, i.e. $\alpha=2$, it follows from the (power transform) CN inequality that, for any $x \in \mathcal{M}\setminus\{x^*\}$,
\begin{equation*}
    b_{2}(x)=\sup_{t \in (0,1]} \frac{F_{2}(\gamma^{x}_{t})-F_{2}(x^*)}{t \cdot d(x,x^*)^{2}}
    \geq \sup_{t \in (0,1]} \frac{t^2 \cdot d(x,x^*)^{2}}{t \cdot d(x,x^*)^{2}}=1.
\end{equation*}
Thus, we may take $B_{2} = 1$ in this case and the proposition gives the usual variance inequality in \cref{Subsection 4.1}.
For $\eta=d^\alpha$ with $\alpha \in (1,2]$ in general, if $P \in \mathcal{P}_\alpha(\mathcal{M})$ satisfies $B_{\alpha}>0$, then (A1) and (A2) hold with $l(y,z)=\alpha2^{-\alpha+1}d(y,z)^{\alpha-1}, K=\alpha^{2} 2^{-2\alpha+2} B_{\alpha}^{-2+2/\alpha}$
and  $\beta=2-2/\alpha \in (0,1]$. Thus, in this general case as well, \cref{thm_fin_conv,thm_inf_conv} hold under the entropy conditions (B1) and (B2),
respectively. The theorems give that
\begin{align}\label{concent-mean-power-fin}
\mathbb{P}\left(d(x_n,x^*) \le 64\Big(\frac{K_{\alpha}^{\alpha/2}}{\alpha}\Big)^{1/(\alpha-1)}\cdot \Big(24\sqrt{AD}+\sqrt{\frac{2}{\Delta}} \Big)\cdot \frac{\sigma_X}{\sqrt{n}}\right) \ge 1-\Delta
\end{align}
for finite-dimensional NPC spaces $\cM$ and
\begin{align}\label{concent-mean-power-inf}
\mathbb{P}\left(d(x_n,x^*) \le 2\Big(\frac{K_{\alpha}^{\alpha/2}}{\alpha}\Big)^{1/(\alpha-1)}\cdot C_{A,\zeta} \cdot \rho_n \cdot \frac{\sigma_X}{\sqrt{\Delta}}\right) \ge 1-\Delta
\end{align}
for infinite-dimensional cases, where $K_{\alpha}=\alpha^2 2^{-2\alpha+2}B_{\alpha}^{-2+2/\alpha}$ and $\rho_n=n^{-1/2}$ if $0<\zeta<1$; $n^{-1/2}\cdot \log n$ if $\zeta=1$; $n^{-1/2\zeta}$ if $\zeta>1$.
Note that the concentration rates in terms of $\Delta$ and $n$ in \eqref{concent-mean-power-fin} and \eqref{concent-mean-power-inf} do not depend on $\alpha \in (1,2]$.

\begin{remark}\label{eta-choice}
There are other choices of $\eta$, not of the form $d^{\alpha}$, that may be of interest in statistics. For example, one may be interested in $\eta(x,y)=L_{\delta}(d(x,y))$, where $L_\delta$ with $\delta>0$ is the Huber loss defined by
\begin{equation*}
    L_{\delta}: [0,+\infty) \rightarrow [0,+\infty), \quad x \mapsto
    \begin{cases}
        x^{2}/2, &\quad 0 \le x \le \delta\\
        \delta \cdot (x-\delta/2), &\quad x > \delta.
    \end{cases}
\end{equation*}
This choice shares with $\eta=d^{\alpha}$ for $\alpha \in [1,2]$ the idea of combining the squared and absolute losses. Another example that may be of practical interest is the Kullback-Leibler divergence \cite{le2012asymptotic}, $\eta(\mu, \nu)=D_{KL} (\nu || \mu)$, when $(\cM,d)=(\mathcal{P}_{2}(\mathbb{R}),W_{2})$. The latter is an example of asymmetric functional. However, it seems difficult to prove the basic inequalities in (A1) and (A2) for general $\eta$. In particular, we are not aware of any type of bound for $\eta(\gamma_{t}, z)$ along a geodesic $\gamma:[0,1] \rightarrow \mathcal{M}$ for general $\eta$, which we need for the proof of (A2). To the best of our knowledge, even the results for $\eta=d^{\alpha}$ we present in \cref{thm_gen_CN,prop_gen_var_ineq}  are the first.
\end{remark}

\subsection{Metric entropy}\label{Subsection 3.2.3}

VC-type classes appear frequently in the study of empirical processes.
Our assumption (B1) on the complexity of $\cM$ in terms of the random entropy is crucial for the derivation of
non-asymptotic concentration properties of $x_n$.
It gives universal non-stochastic bounds to the random entropies $N(\tau, \mathcal{F}_\eta(\delta), \|\cdot\|_{2,P_{n}} )$.
The calculation of the (weak) VC index $D$ in (B1), i.e. the uniform control of the random covering numbers, is difficult in many cases (see Section 7.2 in \cite{van2014probability}).
A common technique to obtain $D$ is to exploit the combinatorial structure of the class of functions, provided that it is a VC subgraph class of functions, see \cite{boucheron2013concentration,gine2021mathematical,van2014probability} and references therein.
However, with a more explicit assumption (B1$'$) given below, which essentially characterizes the dimension of the underlying spaces,
we may calculate directly the (weak) VC index without combinatorial notions of complexity such as shattering.

\begin{itemize}\setlength{\itemindent}{1em}
\item[(B1$'$)] There are some constants $A_{1},D_1>0$ such that, for any $\tau \in (0,r]$,
\[
N(\tau,B(x^*,r),d) \leq \left(\frac{A_{1} r}{\tau} \right)^{D_1}.
\]
\end{itemize}

For a finite-dimensional normed space $\cM$, one may take $D_1={\rm dim}(\cM)$ irrespective of the underlying norm, since all norms in such a space are equivalent. On the contrary, $A_1$ depends on the choice of a metric $d$
and $A_1=3$ for the Euclidean norm when $\cM=\mathbb{R}^D$. In any case, (B1$'$) is for finite-dimensional $\cM$ and thus
the dependence of $A_1$ and $D_1$ on the metric $d$ does not need to be made explicit because the values of $A_1$ and $D_1$ do not affect the {\it convergence rates}
in \cref{thm_fin_conv} of \cref{Sec_Empirical means} and in \cref{thm_fin_mom,thm_fin_power_mom} of \cref{Sec_MoM} that are for finite-dimensional cases.

\begin{proposition}\label{prop_npc_cover}
Let $\eta=d^{\alpha}$ with $1<\alpha \le 2$. Assume (A2) and (B1$'$). Then (B1) holds with $A=A_1^{\alpha-1}$ and $D=D_{1}/(\alpha-1)${\rm:}
\[
N\left(\tau\|H_{\delta,\eta}\|_{2,P_{n}}, \mathcal{F}_\eta(\delta), \|\cdot\|_{2,P_{n}} \right) \leq \left(\frac{A_{1}}{\tau^{1/(\alpha-1)}}\right)^{D_{1}}, \quad 0<\tau \leq 1.
\]
In particular, when $\eta=d^{2}$ where (A2) is satisfied, (B1$'$) alone implies (B1) with $A=A_1$ and $D=D_{1}$.
\end{proposition}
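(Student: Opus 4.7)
The plan is to reduce the $\|\cdot\|_{2,P_n}$-covering problem for $\mathcal{F}_\eta(\delta)$ to a $d$-covering problem on a metric ball in $\mathcal{M}$, and then invoke (B1$'$). The key ingredient is a Lipschitz-type estimate for the map $x\mapsto f_\eta(x,\cdot)$ with Lipschitz constant involving the growth function $l(\cdot,\cdot)=\alpha 2^{-\alpha+1} d(\cdot,\cdot)^{\alpha-1}$ coming from the power transform quadruple inequality \eqref{power-quad-ineq}.

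First I would use (A2) to localize $\cM_\eta(\delta)$. Since (A2) combined with $l(x,x^*)=\alpha 2^{-\alpha+1}d(x,x^*)^{\alpha-1}$ yields $d(x,x^*) \le r_\delta:= \bigl[\sqrt{K\delta^\beta}\,/(\alpha 2^{-\alpha+1})\bigr]^{1/(\alpha-1)}$ for every $x\in\cM_\eta(\delta)$, we get $\cM_\eta(\delta)\subseteq B(x^*,r_\delta)$. By (B1$'$), for every $\epsilon\in(0,r_\delta]$ this ball admits a $d$-net $\mathcal{N}\subseteq B(x^*,r_\delta)$ of cardinality at most $(A_1 r_\delta/\epsilon)^{D_1}$.

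Next I would establish the Lipschitz estimate. Writing $f_\eta(x,y)-f_\eta(z,y) = \int_{\mathcal{M}}\bigl[\eta(x,y)-\eta(z,y)-\eta(x,y')+\eta(z,y')\bigr]\,\rd P(y')$ and applying \eqref{power-quad-ineq} (in both signs, since it is symmetric in $(x,z)$) pointwise, one obtains
\begin{equation*}
|f_\eta(x,y)-f_\eta(z,y)| \;\le\; 2\,l(x,z)\int_{\mathcal{M}}d(y,y')\,\rd P(y').
\end{equation*}
Taking the $\|\cdot\|_{2,P_n}$-norm in $y$ and recalling $H_{\delta,\eta}(y)=2\sqrt{K\delta^{\beta}}\int d(y,y')\,\rd P(y')$ yields
\begin{equation*}
\|f_\eta(x,\cdot)-f_\eta(z,\cdot)\|_{2,P_n} \;\le\; \frac{l(x,z)}{\sqrt{K\delta^{\beta}}}\,\|H_{\delta,\eta}\|_{2,P_n}.
\end{equation*}

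Finally I would translate a $d$-net into a $\|\cdot\|_{2,P_n}$-net. Substituting $l(x,z)=\alpha 2^{-\alpha+1}d(x,z)^{\alpha-1}$ and choosing the $d$-scale as $\epsilon=\tau^{1/(\alpha-1)}r_\delta$, the right-hand side of the Lipschitz estimate is at most $\tau\|H_{\delta,\eta}\|_{2,P_n}$ whenever $d(x,z)\le\epsilon$. Hence the net $\{f_\eta(z,\cdot):z\in\mathcal{N}\}$ is a $\tau\|H_{\delta,\eta}\|_{2,P_n}$-net of $\mathcal{F}_\eta(\delta)$, and its cardinality is at most $(A_1 r_\delta/\epsilon)^{D_1}=(A_1/\tau^{1/(\alpha-1)})^{D_1}=\bigl(A_1^{\alpha-1}/\tau\bigr)^{D_1/(\alpha-1)}$, which is exactly (B1) with $A=A_1^{\alpha-1}$ and $D=D_1/(\alpha-1)$. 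The case $\alpha=2$ specializes with $l=d$, $K=\beta=1$ and gives $A=A_1$, $D=D_1$.

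The main obstacle is the power-law exchange rate between the two (pseudo-)metrics: because $l(x,z)$ scales as $d(x,z)^{\alpha-1}$ rather than $d(x,z)$, the $d$-radius $\epsilon$ needed to force $\|\cdot\|_{2,P_n}$-closeness at level $\tau$ must shrink as $\tau^{1/(\alpha-1)}$, which is what amplifies the dimension from $D_1$ to $D_1/(\alpha-1)$. Everything else is essentially a careful bookkeeping of the constants extracted from \eqref{power-quad-ineq} and the definition of $H_{\delta,\eta}$.
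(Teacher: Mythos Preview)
Your proof is correct and follows essentially the same route as the paper: both arguments use the power-transform quadruple inequality \eqref{power-quad-ineq} to bound $\|f_\eta(x,\cdot)-f_\eta(z,\cdot)\|_{2,P_n}$ by a constant multiple of $d(x,z)^{\alpha-1}\|H_{\delta,\eta}\|_{2,P_n}$, use (A2) to enclose $\cM_\eta(\delta)$ in a ball of radius $r_\delta=\bigl(K\delta^\beta/(\alpha^2 2^{-2\alpha+2})\bigr)^{1/(2(\alpha-1))}$, and then invoke (B1$'$) with the rescaled radius $\tau^{1/(\alpha-1)}r_\delta$. The only cosmetic difference is the order of presentation (you localize first and then derive the Lipschitz estimate, whereas the paper does the reverse).
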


Considering that the VC index $D_{\rm vc}$ introduced in \cref{assumptions} is usually larger than the dimension $D_1$ of the underlying space $\mathcal{M}$,
the second result in \cref{prop_npc_cover}
is striking as it states that the (weak) VC index $D$ equals $D_1$ in our framework when $\eta=d^2$. It is noteworthy that the right hand side of the inequality in \cref{prop_npc_cover}
does not involve any term related to $\delta$. This can be interpreted as that the growth of $\|H_{\delta,\eta}\|_{2,P_{n}}$ counterbalances
the increasing complexity of the class $\mathcal{F}_\eta(\delta)$ as $\delta$ gets larger.

When $\cM$ is a Riemannian manifold and $\eta=d^\alpha$ with $\alpha \in (1,2]$, the constant $A$ in (B1) is indispensably related to the {\it volume control problem}, which is one of the fundamental problems in geometry.
Indeed, the constant $A_1$ in (B$1'$) for a Riemannian manifold depends on how fast the volume of a ball grows as its radius increases, which relies on the sectional (or Ricci) curvature
of $\mathcal{M}$. The Bishop-G\"{u}nther inequality gives an upper bound to the volume change in terms of the sectional curvature, see Theorem 3.101 (ii) in \cite{gallot1990riemannian}.
For the reversed inequality, named as the Bishop-Gromov inequality, see \cite{villani2009optimal}.
Because of these inequalities, $A_1$ thus $A$ in (B1) becomes smaller as the curvature of $\mathcal{M}$ increases when $\eta=d^\alpha$ with $\alpha \in (1,2]$.

Contrary to the case of finite-dimensional $\cM$, a version of (B1$'$) is not true in many cases of infinite-dimensional $\cM$.
If $\cM$ is an infinite-dimensional normed space, then any closed ball is non-compact, so that there is some $\tau_{0}>0$ such that $\log N(\tau,B(x^*,r),d)=\infty$ for any $\tau<\tau_{0}$.
Therefore, the approach that mimics the finite-dimensional case does not work for infinite-dimensional $\cM$ in general.
However, for separable Hilbert spaces we may calculate directly the explicit constants in the assumption (B2),
$A=1/32$ and $\zeta=1$ as demonstrated in the following proposition.

\begin{proposition}\label{prop_hilb_cover}
Let $\cM$ be a Hilbert space and $\eta=d^2$ with $d(x,y)=\|x-y\|$. Then, for any probability measure $P \in \mathcal{P}_{2}(\mathcal{M})$,
\begin{equation*}
\log N\left(\tau\|H_{\delta,\eta}\|_{2,P}, \mathcal{F}_\eta(\delta), \|\cdot\|_{2,P}  \right) \leq \frac{1}{32\tau^{2}}, \quad 0<\tau \leq 1.
\end{equation*}
Furthermore, for the empirical measure $P_{n}$, it holds that
\begin{equation*}
\log N\left(\tau\|H_{\delta,\eta}\|_{2,P_{n}}, \mathcal{F}_\eta(\delta), \|\cdot\|_{2,P_{n}}  \right) \leq \frac{1}{32\tau^{2}}, \quad 0<\tau \leq 1.
\end{equation*}
\end{proposition}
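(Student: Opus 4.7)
The plan is to identify $\mathcal{F}_\eta(\delta)$ as the image of the Hilbert ball $B(0,\sqrt{\delta})$ under a bounded linear map into $L^2(Q)$ (for $Q=P$ or $Q=P_n$), reducing the covering problem to the covering number of an ellipsoid in $\mathcal{H}$ whose size is controlled by the trace of a covariance operator, which in turn is controlled by $\|H_{\delta,\eta}\|_{2,Q}$. The three key ingredients are: (i) the explicit parameterization from \cref{hilb_example2}, (ii) an envelope lower bound via Jensen's inequality, and (iii) a sharp covering estimate for the ellipsoid.

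First I would extend the calculations of \cref{hilb_example2} to an arbitrary measure $Q \in \mathcal{P}_2(\mathcal{M})$: the formulas $f_\eta(x,y)=2\langle x-x^*,x^*-y\rangle$ and $\mathcal{M}_\eta(\delta)=B(x^*,\sqrt{\delta})$ depend on $P$ only through $x^*$, so writing $u:=x-x^*$ gives $\mathcal{F}_\eta(\delta)=\{f_u:u\in B(0,\sqrt{\delta})\}$ with $f_u(y)=2\langle u,x^*-y\rangle$. Introducing the centered covariance operator $\Sigma_Q$ on $\mathcal{H}$, defined by $\langle v,\Sigma_Q w\rangle=\int\langle v,y-x^*\rangle\langle w,y-x^*\rangle\,\mathrm{d}Q(y)$, a direct computation yields $\|f_u-f_{u'}\|_{2,Q}^{2}=4\|\Sigma_Q^{1/2}(u-u')\|_\mathcal{H}^{2}$, so covering $\mathcal{F}_\eta(\delta)$ in $L^2(Q)$ is equivalent to covering the ellipsoid $E_Q:=2\Sigma_Q^{1/2}\bigl(B(0,\sqrt{\delta})\bigr)\subset\mathcal{H}$ under the Hilbert norm. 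Next, since \cref{Subsection 4.1} gives $K=\beta=1$ and $l=d$, the envelope is $H_{\delta,\eta}(y)=2\sqrt{\delta}\,\mathbb{E}_P\|y-X\|$. Applying Jensen's inequality to the convex map $x\mapsto\|y-x\|$ yields the pointwise bound $\mathbb{E}_P\|y-X\|\ge\|y-x^*\|$, and integrating its square against $Q$ gives
\begin{equation*}
\|H_{\delta,\eta}\|_{2,Q}^{2}\ \ge\ 4\delta\int\|y-x^*\|^{2}\,\mathrm{d}Q(y)\ =\ 4\delta\,\mathrm{tr}(\Sigma_Q).
\end{equation*}
This is the step that makes $\mathrm{tr}(\Sigma_Q)$ cancel in the final bound and permits $A=1/32$ to be universal (independent of $Q$, $P$, and $\delta$); note that $\mathrm{tr}(\Sigma_Q)=\mathbb{E}_Q\|Y-x^*\|^{2}<+\infty$ both for $Q=P$ and (almost surely) for $Q=P_n$.

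The core step is a sharp covering estimate for the ellipsoid $E_Q$. Diagonalizing $\Sigma_Q$ with eigenvalues $\lambda_{1}\ge\lambda_{2}\ge\cdots$, the semi-axes of $E_Q$ are $2\sqrt{\delta\lambda_{i}}$ with $\sum_{i}4\delta\lambda_{i}=4\delta\,\mathrm{tr}(\Sigma_Q)$. I aim to establish a bound of the form
\begin{equation*}
\log N\bigl(\varepsilon,E_Q,\|\cdot\|_\mathcal{H}\bigr)\ \le\ \frac{1}{32}\cdot\frac{4\delta\,\mathrm{tr}(\Sigma_Q)}{\varepsilon^{2}}
\end{equation*}
by an explicit construction that tracks the constant: truncate $u\in B(0,\sqrt{\delta})$ to the top-$k$ eigenspace of $\Sigma_Q$ (the tail error $\|\Sigma_Q^{1/2}(I-P_k)u\|\le\sqrt{\delta\lambda_{k+1}}$ is absorbed into a fraction of $\varepsilon$), grid-cover the remaining $k$-dimensional ellipsoid, and optimize over $k$. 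Plugging in $\varepsilon=\tau\|H_{\delta,\eta}\|_{2,Q}$ and invoking the envelope lower bound yields
\begin{equation*}
\log N\bigl(\tau\|H_{\delta,\eta}\|_{2,Q},\mathcal{F}_\eta(\delta),\|\cdot\|_{2,Q}\bigr)\ \le\ \frac{1}{32}\cdot\frac{4\delta\,\mathrm{tr}(\Sigma_Q)}{\tau^{2}\|H_{\delta,\eta}\|_{2,Q}^{2}}\ \le\ \frac{1}{32\tau^{2}},
\end{equation*}
and the argument is identical for $Q=P$ and $Q=P_n$ since only the (finite) trace of $\Sigma_Q$ enters.

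The main obstacle is pinning down the sharp constant $1/32$ in the ellipsoid covering step: a generic Sudakov-type estimate yields a bound of the right form $C\cdot(\sum a_{i}^{2})/\varepsilon^{2}$ but with a worse universal constant $C$. Achieving $C=1/32$ will require a more delicate argument, for instance a Maurey-style random net in the $\Sigma_Q^{1/2}$-weighted Hilbert norm with carefully optimized variance, or a direct volumetric / dimension-by-dimension chaining construction after the top-$k$ truncation that leverages the Gaussian nature of ellipsoidal covering rather than merely its worst-case Sudakov bound.
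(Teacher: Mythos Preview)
Your reduction is correct and matches the paper's: identify $\mathcal{F}_\eta(\delta)$ with the ellipsoid $2\Sigma_Q^{1/2}\bigl(B(0,\sqrt{\delta})\bigr)$ in $\mathcal{H}$, bound its covering number by a multiple of $\delta\,\mathrm{tr}(\Sigma_Q)/\varepsilon^2$, and then cancel $\mathrm{tr}(\Sigma_Q)$ against the envelope norm. Your Jensen step $\|H_{\delta,\eta}\|_{2,Q}^2\ge 4\delta\,\mathrm{tr}(\Sigma_Q)$ is correct and in fact more careful than the paper, which silently replaces $H_{\delta,\eta}$ by the tighter Hilbert-specific envelope $2\sqrt{\delta}\,\|y-x^*\|$ (for which equality holds); your inequality is exactly what justifies that substitution in the direction needed.

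The one place you diverge, and where you flag a difficulty, is the ellipsoid covering bound with the precise constant $1/32$. You dismiss Sudakov's minorisation as giving ``a worse universal constant'' and propose a constructive truncate-and-grid argument instead. This is where the paper takes the shorter route: it invokes Sudakov's minorisation with Fernique's sharp constant (Theorem~2.4.12 in Gin\'e--Nickl, with the constant $1/8$ traced to Fernique), obtaining
\[
\log N\bigl(\tau,\mathcal{F}(\delta),\|\cdot\|_{2,Q}\bigr)\ \le\ \frac{1}{8}\left(\frac{\sqrt{\delta}\,\mathbb{E}\|\Sigma_Q^{1/2}g\|}{\tau}\right)^{2}\ \le\ \frac{\delta\,\mathrm{tr}(\Sigma_Q)}{8\tau^{2}},
\]
and then the factor $4\delta\,\mathrm{tr}(\Sigma_Q)$ from the envelope yields exactly $1/32$. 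So the constant you are worried about is not an obstacle once you use the sharp Sudakov inequality rather than a generic version; your proposed Maurey or dimension-by-dimension construction would work in principle but is unnecessary.
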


\cref{prop_hilb_cover} may be used to verify (B2) with $\eta=d^2$ for Riemannian manifolds $(\cM,d)$.
Note that $d(x,y) \leq \|\log_{p}x-\log_{p}y\|$ for $\cM$ with non-negative curvature,
while $d(x,y) \geq \|\log_{p}x-\log_{p}y\|$ for $\cM$ with non-positive curvature, i.e. for Hadamard manifolds.
By embedding $\cM$ into the tangent space $T_{x^*}\mathcal{M}$ and applying \cref{prop_hilb_cover} to $T_{x^*}\mathcal{M}$,
one may argue that (B2) is satisfied with some $\zeta \leq 1$ for Riemannian manifolds with non-negative curvature,
and with some $\zeta \geq 1$ for Hadamard manifolds. In fact, $\zeta$ in (B2), termed as curvature complexity,
can be made smaller as the curvature of $\cM$ gets larger. The latter follows from the {\it Toponogov comparison theorem}:
the larger the sectional curvature of an underlying space $\mathcal{M}$ is, the slower the acceleration of the deviation between two geodesics emanating from a single point.

%%%%%%%%%%%%%%%%%%%%%%%%%%%%%%%%%%%%%%%%%%%%%%%%%%%%%%%%%%%%%%%%%%%%
\subsection{Wasserstein space}
For a separable Banach space $(\mathcal{X},\|\cdot\|)$, $\mathcal{P}_2(\mathcal{X})$ is called {\it Wasserstein space}
and can be written as
\begin{equation*}
    \mathcal{P}_2(\mathcal{X})=\{\mu \in \mathcal{P}(\mathcal{X}): \int_{\mathcal{X}} \|x\|^2 {\rm d}\mu(x) < \infty \},
\end{equation*}
where $\mathcal{P}(\mathcal{X})$ denotes the set of all probability measures on $\cX$.
The Wasserstein space $\mathcal{P}_2(\mathcal{X})$ is equipped with the {\it Wasserstein distance}
\begin{equation*}
W_{2}(\mu, \nu)=\left(\inf_{\pi \in \Pi(\mu,\nu)} \int_{\mathcal{X} \times \mathcal{X}} \|x-y\|^{2} \rd \pi(x,y) \right)^{1/2}, \quad \mu, \nu \in \mathcal{P}_2(\mathcal{X})
\end{equation*}
where $\Pi(\mu,\nu)$ denotes the family of all probability measures on $\mathcal{M} \times \mathcal{M}$ with marginals $\mu$ and $\nu$.

The Wasserstein space $\mathcal{P}_2(\mathcal{X})$ for a general Banach space $\mathcal{X}$ has non-negative Alexandrov curvature at any probability measure
$\mu \in \mathcal{P}_2(\mathcal{X})$ that is absolutely continuous with respect to all non-degenerate Gaussian measures
\cite{ahidar2019rate, panaretos2020invitation}. For $\mathcal{X}=\mathbb{R}$, however, $\mathcal{P}_2(\mathbb{R})$ has
vanishing Alexandrov curvature \cite{kloeckner2010geometric}. Thus, the latter is an NPC space, and (A1) and (A2) are satisfied with $K=\beta=1$ and $l=W_2$
for the usual choice $\eta(\mu,\nu)= W_{2}(\mu, \nu)^{2}$, see \cref{Subsection 4.1}.
Even though $\mathcal{P}_2(\mathbb{R})$ is not compact, if we restrict ourselves to $\cM=\mathcal{P}_2([-B,B]) \subset \mathcal{P}_2(\mathbb{R})$ for $0<B<\infty$,
then $\cM$ is compact in $\mathcal{P}_2(\mathbb{R})$ (see Corollary 2.2.5 in \cite{panaretos2020invitation}) with a finite diameter:
$W_{2}(\mu, \nu)\le 2B$ for all $\mu, \nu \in \mathcal{P}_2([-B,B])$.
This implies that the Wasserstein ball $B(\mu^*,r) \subset \mathcal{P}_2(\mathbb{R})$ is way larger than $\mathcal{P}_2([-r/2,r/2])$, since the former set includes probability measures with non-compact support
and there is no hope that one can prove (B2) via a version of (B1$'$) when $\cM=\mathcal{P}_2(\mathbb{R})$.
Nonetheless, for $\cM=\mathcal{P}_2([-B,B])$ for some $B>0$, we may obtain a version of (B1$'$) for any $D_1>1$ due to Theorem A.1 in \cite{bolley2007quantitative}:
\begin{equation*}
N \left(\tau,\mathcal{P}_2([-B,B]),W_{2} \right)
\leq \left( \frac{\sqrt{16e}B}{\tau} \right)^{8B/\tau}.
\end{equation*}
This would give (B2) for some $A, \zeta>1/2$ that do not depend on $n$ as in the finite-dimensional case, see Subsection 2.2.4 of \cite{panaretos2020invitation} or Appendix A of \cite{bolley2007quantitative} for the explicit constants.

\section{Geometric-median-of-means}\label{Sec_MoM}

For empirical Fr\'echet means in non-compact metric spaces, polynomial concentration, as we derived in \cref{Sec_Empirical means}, is the best one can achieve.
In this section we introduce new estimators and we show that they have exponential concentration in general NPC spaces.
The definitions of the estimators are for general metric spaces $(\mathcal{M},d)$ and functionals $\eta$.

Let the random sample $\{X_1, \ldots, X_n\}$ be partitioned
into $k$ disjoint and independent blocks $\mathcal{B}_{1},\ldots$,$\mathcal{B}_{k}$ of size $m\ge n/k$. For each $1 \le j \le k$, define
\begin{align}\label{def-Fnj}
F_{n,j}(x)=\frac{1}{m}\sum_{X_i \in \mathcal{B}_j} \eta(x,X_{i}).
\end{align}
When $\cM$ is a Hilbert space, one may interpret $F_{n,j}(a) < F_{n,j}(b)$ for two points $a, b \in \cM$ as that $a$ is `closer'
than $b$ to the `center' of the $j$th block $\mathcal{B}_j$.
Indeed, in the case where $\cM=\mathbb{R}^D$ and $\eta(x,y)=|x-y|^2$,
\begin{align}\label{equiv-dist}
F_{n,j}(a) < F_{n,j}(b) \quad \mbox{if and only if} \quad |a-Z_j| < |b-Z_j|,
\end{align}
where $Z_j$ in general is the sample Fr\'echet mean of the block $\mathcal{B}_j$ defined by
\[
Z_j \in \argmin_{x \in \cM} F_{n,j}(x).
\]
More generally, when $\cM$ is a Hilbert space and $\eta(x,y)=\|x-y\|^2$, then $F_{n,j}(a) < F_{n,j}(b)$ is equivalent to $\|a-Z_{j}\| < \|b-Z_{j}\|$.
This follows from $F_{n,j}(x) =F_{n,j}(Z_j) + \|x-Z_j\|^2$.

\begin{definition}\label{defeatdef}
For $a,b \in \mathcal{M}$, we say that `$a$ defeats $b$' if $F_{n,j}(a) \le F_{n,j}(b)$ for more than $k/2$ blocks $\mathcal{B}_j$.
For $x \in \mathcal{M}$, let
\begin{align*}
S_{x}=\{a \in \mathcal{M} : \text{a defeats x} \}, \quad r_{x}=\argmin \{r>0 : S_{x} \subset B(x,r)\}.
\end{align*}
We call $S_{x}$ the `$x$-defeating region' and $r_x$ the `$x$-defeating radius'.
The new estimator $x_{MM}$ of $x^*$ is then defined by
\begin{equation}\label{mmdef}
x_{MM} \in \argmin_{x \in \mathcal{M}} r_{x}.
\end{equation}
We call it `geometric-median-of-means', or simply `median-of-means'
when there is no confusion.
\end{definition}

\begin{remark}\label{existence-of-xMM}
We note that `$a$ defeats $b$' if and only if ${\rm median}\{F_{n,j}(a) - F_{n,j}(b): 1 \le j \le k\} \le 0$, see \cite{lecue2020robust}.
The minimum in \eqref{mmdef} is always achieved, provided that $\eta:\mathcal{M} \times \mathcal{M} \rightarrow [0,+\infty)$ is continuous and for any $x \in \cM$, $\eta (x,y) \rightarrow \infty$ as $d(x,y) \rightarrow \infty$. For any $x \in \cM$, the $x$-defeating region $S_{x}$ is a closed and bounded subset of $\cM$ containing $x$, thus $r_{x} < +\infty$. This would entail that $x \mapsto r_{x}$ is a continuous function, and with the fact that $r_{x} \rightarrow \infty$ as $\min\{d(x,X_{1}),\dots, d(x,X_{n})\} \rightarrow \infty$, one may argue that the minimum of $r_x$ over $x \in \cM$ is attained at some point in $\cM$.
By definition, $x$ defeats itself so that $x \in S_x$ for all $x \in \cM$.
Also, `$a$ defeats $b$' does not always imply
`$b$ does not defeat $a$'. Both $a$ and $b$ can defeat each other, and if it happens then there exists at least one $j$ such that $F_{n,j}(a)=F_{n,j}(b)$.
Furthermore, $r_x\le r$ if and only if any point $a$ with $d(x,a)>r$ cannot defeat $x$ since
\[
r_{x}=\max\left\{d(x,a): a \in \mathcal{M} \text{ defeats } x\right\}.
\]
In the case where $\cM$ is a Euclidean space, the median-of-means may be interpreted in terms of Tukey depth, see \cite{hopkins2020mean}.
\end{remark}

In view of \eqref{equiv-dist}, our definition of `defeat' is a natural extension of the notion introduced in \cite{lugosi2019sub} for $\cM=\mathbb{R}^D$:
`$a$ defeats $b$' if
$|a-Z_{j}| \le |b-Z_{j}|$ for more than  $k/2$ blocks $\mathcal{B}_j$.
We note that, for curved metric spaces, the equivalence between $F_{n,j}(a) \le F_{n,j}(b)$ and $d(a,Z_{j}) \le d(b,Z_{j})$
is no longer valid in general. Our definition in terms of $F_{n,j}(x)$ is preferable to the one based on $d(x,Z_j)$ since the latter
needs the much more onerous computation of sample Fr\'echet means $Z_j$ for curved spaces.
Our definition dispenses with the calculation of $Z_j$ in all competitions between two points in $\cM$.

Although $d(a,Z_{j}) \le d(b,Z_{j})$ is not equivalent to $F_{n,j}(a) \le F_{n,j}(b)$ for curved spaces,
one may roughly interpret `$a$ defeats $b$' as that $a$ is closer than $b$ to the centers of more than half of the $k$ blocks,
for $\eta=d^\alpha$ with $\alpha \in (1,2]$.
The idea of minimizing the radius of defeating region is that, if $x$ is far away from $x^*$, and thus from the block centers $Z_j$,
then it is more likely that $x$ would be defeated by some point located far from $x$, i.e. $r_x$ would be large.
Since $x_{MM}$ is determined by the ordering relation based on $F_{n,j}$ rather than
by the magnitudes of $F_{n,j}$ themselves, it reflects the geometric structure of $\eta$
and inherits the characteristics of the Euclidean median of
$Z_{1}, \dots, Z_{k}$. Indeed, when $\cM=\mathbb{R}$ and $\eta(x,y)=|x-y|^2$, $x_{MM}$ in \cref{defeatdef} coincides with the usual sample median
of $Z_{1}, \dots, Z_{k}$.

\begin{figure}%[ht!]
\centering
\includegraphics[width=14cm,height=10cm]{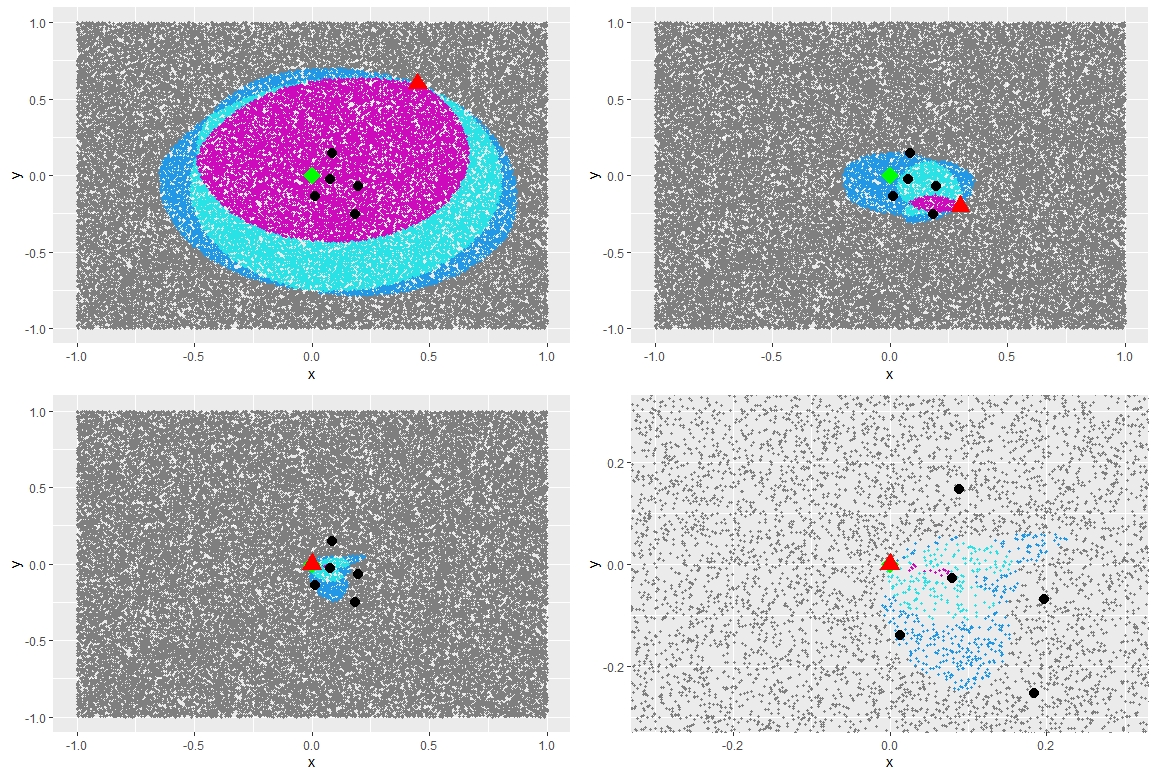}
\caption{Illustration of the $x$-defeating region $S_x$ for three choices of $x=\textcolor{red}{\blacktriangle}$: $\textcolor{red}{\blacktriangle}=(0.45,0.6)$ in the top-left,
$\textcolor{red}{\blacktriangle}=(0.3,-0.2)$ in the top-right and $\textcolor{red}{\blacktriangle}=(0,0)$ in the bottom-left panel.
For this a dataset $\{X_i: 1 \le i \le n\}$ of size $n=10,000$ was generated from a bivariate distribution on $\mathbb{R}^2$ with mean $\textcolor{green}{\blacklozenge}=(0,0)$, and it was partitioned into
$k=5$ blocks randomly. The block sample means $Z_{1}, \dots, Z_{5}$ are depicted as $\bullet$ points.
The bottom-right panel is the zoomed-in picture of the bottom-left.
In each panel with a given $\textcolor{red}{\blacktriangle}$, the color of each region indicates the `defeating score', against $\textcolor{red}{\blacktriangle}$, of the points in the region,
where the `defeating score' of a point $a$ against $\textcolor{red}{\blacktriangle}$
equals the number of blocks $\mathcal{B}_{j}$ such that $F_{n,j}(a) \le F_{n,j}(\textcolor{red}{\blacktriangle})$. The violet region is for the score 5, the sky blue for 4, the blue for 3 and
the gray for the scores $\leq 2$.
Thus, the union of violet/sky-blue/blue colored regions is the $x$-defeating region $S_{\textcolor{red}{\blacktriangle}}$ in each panel.
}
\label{defeatfigure}
\end{figure}

To illustrate how $x_{MM}$ works, we simulated $n=10,000$ data points from a bivariate distribution
and chose $k=5$ for the number of blocks.
In Figure~\ref{defeatfigure} we depicted them on $[-1,1]^2$ and also $Z_j$ ($\bullet$) for $1 \le j \le 5$.
The figure demonstrates that $r_{x}$, which is the radius of the smallest ball centered at $x=\textcolor{red}{\blacktriangle}$ covering the `violet/sky-blue/blue' regions,
tends to decrease as $x \in \mathcal{M}$ gets closer to the Fr\'echet mean $x^*=\textcolor{green}{\blacklozenge}$.
To see how sensitive $x_{MM}$ is to the change of data points, imagine that the data points
in a single block changes completely to arbitrary values. This would change only one $F_{n,j}(\cdot)$
among the five, regardless how extreme the change of the data points is.
Since the points $a$ in the violet and sky-blue regions, respectively, have $F_{n,j}(a) \le F_{n,j}(\textcolor{red}{\blacktriangle})$ for $5$ and $4$ blocks with the original dataset, they still defeat $x=\textcolor{red}{\blacktriangle}$ with the modified dataset.
From this one may infer that there would be no significant change in the ordering of $r_x$ across $x \in \cM$.
This consideration suggests that $x_{MM}$ is more robust than $x_n$ to large deviation of a few blocks, which results in $x_{MM}$ having stronger concentration than $x_{n}$,
provided that the number of blocks ($k$) is sufficiently large.
The latter has been evidenced for $\cM=\mathbb{R}$ by \cite{catoni2012challenging,devroye2016sub} and for $\cM=\mathbb{R}^D$ by \cite{lugosi2019sub}.

In the next two subsections, we make precise the above heuristic discussion for NPC spaces with $\eta=d^\alpha$ for $\alpha \in (1,2]$.

%%%%%%%%%%%%%%%%%%%%%%%%%%%%%%%%%%%%%%%%%%%%%%%%%%%%%%%%%%%%%%%%%%%%
\subsection{Common choice \texorpdfstring{$\eta=d^{2}$}{: squared distance}}\label{Section 5.1}

Let $X_{1},\dots, X_{n}$ be i.i.d. random elements taking values in an NPC space $(\mathcal{M},d)$ with finite second moment. Here, we focus on the case $\eta=d^{2}$. The following theorem is essential for deriving an exponential concentration for $x_{MM}$ when $\cM$ is of finite dimension.

\begin{theorem}\label{thm_fin_mom}
Assume (B1) with some constants $A,D>0$. Let $\Delta \in (0,1)$ and $q \in (0,1/2)$.
Let $k$ denote the number of blocks $\mathcal{B}_j$.
If $k=\lceil 1/(2q^2) \log(1/\Delta)\rceil$, then it holds that, with probability at least $1-\Delta$,
$x^*$ defeats all $x \in \mathcal{M}$ with $d(x,x^*)>R_q$ but any such $x$ does not defeat $x^*$, where
\begin{align}\label{mmbound}
R_q=C_q \sigma_{X} \sqrt{\frac{\log(1/\Delta)}{n}},\quad
C_q= \frac{32\sqrt{2}}{q} \left(24\sqrt{AD}+\frac{2}{\sqrt{1-2q}} \right).
\end{align}
\end{theorem}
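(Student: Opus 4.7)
Since $\cM$ is NPC and $\eta = d^2$, \cref{Subsection 4.1} gives (A1) and (A2) with $l = d$ and $K = \beta = 1$; in particular $d(x, x^*)^2 \le L(x) := \int_\cM [\eta(x, y) - \eta(x^*, y)] \, \rd P(y)$ for every $x \in \cM$. For each block $j$ set $L_j(x) := F_{n,j}(x) - F_{n,j}(x^*)$, so $\mathbb{E} L_j(x) = L(x)$. By the defeat relation of \cref{defeatdef}, the conjunction ``$x^*$ defeats $x$ and $x$ does not defeat $x^*$'' is equivalent to $L_j(x) > 0$ holding for strictly more than $k/2$ of the indices $j$. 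Hence the theorem reduces to showing
\begin{equation*}
\mathbb{P}\Big(\, \exists\, x \in \cM : d(x, x^*) > R_q, \; \sum_{j=1}^k \mathbf{1}\{L_j(x) \le 0\} \ge k/2 \,\Big) \le \Delta.
\end{equation*}

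For the pointwise analysis, I apply the quadruple inequality with $(y,z,p,q) = (x, x^*, X, X')$ for i.i.d.\ copies $X, X'$ of $X$, obtaining $|Y - Y'| \le 2 d(x, x^*) d(X, X')$ with $Y := \eta(x, X) - \eta(x^*, X)$. Using $\mathrm{Var}(Y) = \tfrac12 \mathbb{E}[(Y-Y')^2]$ and $\mathbb{E} d(X, X')^2 \le 4 \sigma_X^2$ gives $\mathrm{Var}(L_j(x)) \le 8\, d(x, x^*)^2 \sigma_X^2/m$. Combined with $L(x) \ge d(x, x^*)^2$ and Chebyshev, $\mathbb{P}(L_j(x) \le 0) \le 8 \sigma_X^2/(m \, d(x, x^*)^2)$. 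With $k = \lceil(2q^2)^{-1} \log(1/\Delta) \rceil$ and $m \ge n/k$, this per-block failure probability falls below $\tfrac12 - q$ for all $x$ with $d(x,x^*)$ above an explicit multiple of $\sigma_X \sqrt{\log(1/\Delta)/n}$; Hoeffding's inequality applied to the independent Bernoulli summands $\mathbf{1}\{L_j(x) \le 0\}$ then yields $\mathbb{P}(\sum_j \mathbf{1}\{L_j(x) \le 0\} \ge k/2) \le \exp(-2kq^2) \le \Delta$ at each individual $x$.

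The main technical obstacle is upgrading this to uniform control over the unbounded set $\{x : d(x, x^*) > R_q\}$, since $x \mapsto \mathbf{1}\{L_j(x) \le 0\}$ is discontinuous and the noise level $\mathrm{Var}(L_j(x))$ scales with $d(x, x^*)^2$. Following the Lugosi--Mendelson tournament strategy, I replace the indicator by a $1$-Lipschitz proxy $\phi : \mathbb{R} \to [0, 1]$ with $\mathbf{1}\{t \ge 1\} \le \phi(t) \le \mathbf{1}\{t \ge 0\}$, and peel $\{d(\cdot, x^*) > R_q\}$ into dyadic shells $\{2^\ell R_q < d(\cdot, x^*) \le 2^{\ell+1} R_q\}$ so that the variance is uniformly bounded on each shell. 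On a fixed shell, the empirical process $\frac{1}{k}\sum_j \phi(L_j(x)/\mu_\ell) - \mathbb{E}\phi(L_j(x)/\mu_\ell)$ (for a suitable scale $\mu_\ell$) is bounded via Talagrand's concentration inequality for bounded empirical processes, whose expected supremum is controlled by a Dudley-type integral that, using the VC-type entropy bound (B1) on $\mathcal{F}_\eta(\delta)$ and the envelope $H_{\delta,\eta}$ from \eqref{env-bound}, gives a term scaling like $\sqrt{AD}\,\|H_{\delta,\eta}\|_{2,P}/\sqrt{n}$. Careful bookkeeping produces the additive $24\sqrt{AD}$ summand inside $C_q$, while the pointwise Chebyshev budget gives the $2/\sqrt{1-2q}$ summand.

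Putting the pieces together, on a single good event of probability at least $1 - \Delta$, for every $x$ with $d(x, x^*) > R_q$ the uniform deviation of $\frac{1}{k}\sum_j \mathbf{1}\{L_j(x) \le 0\}$ from its mean is less than $q$, while that mean is itself at most $\tfrac12 - q$ by the pointwise argument, so the empirical count stays strictly below $k/2$ uniformly in $x$, completing the proof. The difficulty I anticipate is the calibration in the third step: choosing the dyadic radii, the Lipschitz scale $\mu_\ell$, and the number of shells so that the Talagrand tail and the pointwise Chebyshev margin combine into the additive form $(24\sqrt{AD} + 2/\sqrt{1-2q})$ cleanly, without spurious $\log n$ factors and while preserving the overall $1/q$ prefactor.
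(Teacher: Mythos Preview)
Your route diverges substantially from the paper's, and the paper's argument is both shorter and avoids the calibration difficulty you flag at the end.

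The paper never attempts to control the block-indexed process $x \mapsto \tfrac{1}{k}\sum_j \mathbf{1}\{L_j(x)\le 0\}$ uniformly in $x$. Instead it observes that the single per-block event $\{F_{n,j}(x^*)-F_{n,j}(Z_j)\le \varepsilon_q^2\}$, which concerns only the block Fr\'echet mean $Z_j$, already \emph{deterministically implies} $L_j(x)>0$ for every $x$ with $d(x,x^*)>2\varepsilon_q$. This reduction comes from the CN inequality at the midpoint of the geodesic $\gamma^x$ from $x^*$ to $x$: $F_{n,j}(Z_j)\le F_{n,j}(\gamma^x_{1/2})\le \tfrac12 F_{n,j}(x)+\tfrac12 F_{n,j}(x^*)-\tfrac14 d(x,x^*)^2$, which rearranges to $L_j(x)\ge -\,(F_{n,j}(x^*)-F_{n,j}(Z_j))+\tfrac12 d(x,x^*)^2$. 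The probability of the per-block event is then read off directly from \cref{thm_fin_conv} applied to the $j$th block with $\Delta$ replaced by $(1-2q)/2$ and $n$ by $m$, yielding exactly $\varepsilon_q = 32\sqrt{k\sigma_X^2/n}(24\sqrt{AD}+2/\sqrt{1-2q})$ and $\mathbb{P}(\mathcal{E}_{n,j})\ge q+\tfrac12$. Hoeffding over the independent indicators $I(\mathcal{E}_{n,j})$ finishes. Thus the uniformity in $x$ is absorbed entirely by the CN inequality, and the entropy assumption (B1) enters only through \cref{thm_fin_conv}, once per block, with no peeling, no Lipschitz surrogate, and no Talagrand.

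Your Lugosi--Mendelson-style argument is plausible in spirit but faces a real obstacle you have not addressed: after symmetrization and contraction you land on a process of the form $\tfrac{1}{k}\sum_j \varepsilon_j L_j(x)/\mu_\ell$, where the Rademacher signs $\varepsilon_j$ are constant within each block of size $m$. This is not the per-sample Rademacher process to which (B1) (an entropy bound in the $\|\cdot\|_{2,P_n}$ metric) applies directly, and the original Lugosi--Mendelson analysis handles the analogous step using inner-product identities unavailable here. Even if this can be repaired, recovering the exact additive form $24\sqrt{AD}+2/\sqrt{1-2q}$ and the clean $1/q$ prefactor through your route would require nontrivial work, whereas in the paper's argument these constants drop out immediately from the explicit bound in \cref{thm_fin_conv}.
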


Let $\cE$ denote an event where, for all $x$ with $d(x,x^*)>R_q$, $x^*$ defeats $x$ but $x$ does not defeat $x^*$. On $\cE \cap\{d(x_{MM},x^*)>R_q\}$,
one has $x^* \in S_{x_{MM}}$, which implies $S_{x_{MM}} \nsubseteq B(x_{MM},R_q)$ so that
$r_{x_{MM}} >R_q$. On $\cE$, one also gets that $x \notin S_{x^*}$ for all $x$ with $d(x,x^*)>R_q$, which implies $S_{x^*} \subset B(x^*,R_q)$ so that
$r_{x^*} \le R_q$ on $\cE$. By the definition of $x_{MM}$, it holds that $r_{x_{MM}} \le r_{x^*}$, however. This means that
\[
\mathbb{P}\big(\cE \cap\{d(x_{MM},x^*)>R_q\}\big)=0.
\]
The foregoing arguments give the following corollary of \cref{thm_fin_mom}.

\begin{corollary}\label{cor_fin_mom}
Assume (B1) with some constants $A,D>0$. Let $\Delta \in (0,1)$ and $q\in(0,1/2)$. Let $k$ denote the number of blocks $\mathcal{B}_j$.
If $k=\lceil 1/(2q^2) \log(1/\Delta)\rceil$, then it holds that $d(x_{MM},x^*) \leq R_q$ with probability at least $1-\Delta$,
where $R_q$ is the constant defined at \eqref{mmbound}.
\end{corollary}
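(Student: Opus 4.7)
The plan is to reduce the corollary to Theorem \ref{thm_fin_mom} via a short deterministic argument on the high-probability event that the theorem supplies; no new probabilistic input will be required.

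First I would fix $q \in (0,1/2)$, $\Delta \in (0,1)$ and $k = \lceil (1/(2q^{2}))\log(1/\Delta) \rceil$, and let $\cE$ denote the event produced by Theorem \ref{thm_fin_mom}: on $\cE$, every $x \in \cM$ with $d(x,x^{*}) > R_{q}$ is defeated by $x^{*}$ but does not itself defeat $x^{*}$. The theorem guarantees $\mathbb{P}(\cE) \geq 1-\Delta$, so it suffices to prove the deterministic inclusion $\cE \subset \{d(x_{MM}, x^{*}) \leq R_{q}\}$ and then invoke monotonicity of probability.

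To establish the inclusion, I would argue by contradiction: suppose $\cE$ occurs yet $d(x_{MM}, x^{*}) > R_{q}$. Applying the first half of the event with the choice $x = x_{MM}$, the point $x^{*}$ defeats $x_{MM}$, so $x^{*} \in S_{x_{MM}}$ by definition of the defeating region. Since $d(x_{MM}, x^{*}) > R_{q}$, this forces $S_{x_{MM}} \not\subset B(x_{MM}, R_{q})$, and the definition of $r_{x_{MM}}$ yields $r_{x_{MM}} > R_{q}$. In the opposite direction, the second half of the event says that no $x$ with $d(x,x^{*}) > R_{q}$ defeats $x^{*}$, so every $a \in S_{x^{*}}$ must satisfy $d(a, x^{*}) \leq R_{q}$; hence $S_{x^{*}} \subset B(x^{*}, R_{q})$ and $r_{x^{*}} \leq R_{q}$. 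But the defining property \eqref{mmdef} of $x_{MM}$ gives $r_{x_{MM}} \leq r_{x^{*}}$, producing the chain $R_{q} < r_{x_{MM}} \leq r_{x^{*}} \leq R_{q}$, which is absurd.

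The only prerequisite that needs checking is that the minimum in \eqref{mmdef} is actually attained, so that the inequality $r_{x_{MM}} \leq r_{x^{*}}$ is meaningful; this has already been addressed in Remark \ref{existence-of-xMM}. With that in hand, the contradiction above proves $\cE \subset \{d(x_{MM}, x^{*}) \leq R_{q}\}$, and hence $\mathbb{P}(d(x_{MM}, x^{*}) \leq R_{q}) \geq \mathbb{P}(\cE) \geq 1-\Delta$. The main obstacle therefore does not lie in the corollary itself but was already resolved inside Theorem \ref{thm_fin_mom}, whose proof must control the defeating relation uniformly over all $x$ outside the ball of radius $R_{q}$; once that uniform control is available, the corollary is essentially a one-line consequence of minimality of $r_{x_{MM}}$.
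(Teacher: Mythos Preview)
Your proof is correct and follows essentially the same approach as the paper: both define the event $\cE$ from Theorem~\ref{thm_fin_mom}, show that on $\cE\cap\{d(x_{MM},x^{*})>R_{q}\}$ one has $r_{x_{MM}}>R_{q}$ (since $x^{*}\in S_{x_{MM}}$) while $r_{x^{*}}\le R_{q}$ (since $S_{x^{*}}\subset B(x^{*},R_{q})$), and then invoke the minimality $r_{x_{MM}}\le r_{x^{*}}$ to obtain a contradiction. Your explicit appeal to Remark~\ref{existence-of-xMM} for the attainment of the minimum is a reasonable addition but not something the paper spells out in its argument.
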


\begin{remark}
Note that the condition $\Delta \in [e^{-2q^{2}n},1)$ is latent in \cref{thm_fin_mom} and also in \cref{thm_fin_power_mom,thm_inf_mom,thm_inf_power_mom,cor_fin_mom,cor_fin_power_mom,cor_inf_mom,cor_inf_power_mom}, since the number of blocks $k=\lceil 1/(2q^2) \log(1/\Delta)\rceil \le n$. When $\cM=\mathbb{R}$, it is known that one should impose $\Delta \in [\Delta_{min},1)$ for some $\Delta_{min}>0$ to achieve a sub-Gaussian performance, see \cite{devroye2016sub}.
\end{remark}

The constant factor $C_q$ in the radius of concentration $R_q$ depends on $q \in (0,1/2)$.
Taking too small (large) $q$ close to $0$ ($1/2$) leads to too large (small) number of blocks $k$, which results in inflating the constant $C_q$
and impairing the concentration property of $x_{MM}$. There is an optimal $q$ in the interval $(0,1/2)$ that minimizes $C_q$ since $C_q$ is a smooth function of $q \in (0,1/2)$
and diverges to $+\infty$ as $q$ approaches either to $0$ or to $1/2$.
We note that $x_{MM}$ with too small $k$ is not much differentiated from the empirical Fr\'echet mean $x_n$, while with too large $k$
the block Fr\'echet means $Z_j$ would be scattered and thus there would be no guarantee that points $x$ close to $x^*$ have small $x$-defeating radius $r_x$.

The following theorem is for infinite-dimensional $\cM$ and also gives an exponential concentration for $x_{MM}$.
\begin{theorem}\label{thm_inf_mom}
Assume (B2) with some constants $A>0$ and $\zeta \ge 1$. Let $\Delta \in (0,1)$ and $q\in(0,1/2)$. Let $k$ denote the number of blocks $\mathcal{B}_j$.
If $k=\lceil 1/(2q^2) \log(1/\Delta)\rceil$, then it holds that, with probability at least $1-\Delta$,
$x^*$ defeats all $x \in \mathcal{M}$ with $d(x,x^*)>R_q$ but any such $x$ does not defeat $x^*$,
where
\begin{equation}\label{mmbound2}
\begin{split}
R_{q,\zeta}=
\begin{cases}
\displaystyle c_{q, 1} \cdot \sigma_X \cdot \log n\cdot \sqrt{\frac{\log (1/\Delta)}{n}} \quad &\text{if }\zeta = 1\\ %%%
\quad \\
\displaystyle c_{q, \zeta} \cdot \sigma_X \cdot \left(\frac{\log (1/\Delta)}{n}\right)^{1/2\zeta} \quad &\text{if }\zeta>1\\ %%%
\end{cases} \\
\end{split}
\end{equation}
where $c_{q, \zeta}=\frac{2 C_{A, \zeta}}{q\sqrt{1-2q}}$ with $C_{A, \zeta}$ appearing in \cref{thm_inf_conv}.
\end{theorem}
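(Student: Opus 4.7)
The proof follows the scheme of Theorem~\ref{thm_fin_mom}, with the finite-dimensional empirical process bound (derived via (B1) in Theorem~\ref{thm_fin_conv}) replaced throughout by its infinite-dimensional counterpart (derived via (B2) in Theorem~\ref{thm_inf_conv}). Fix $q\in(0,1/2)$ and $\Delta\in(0,1)$, set $k=\lceil(1/(2q^{2}))\log(1/\Delta)\rceil$, and partition the sample into $k$ independent blocks $\mathcal{B}_{1},\dots,\mathcal{B}_{k}$ of size $m=\lfloor n/k\rfloor$. Write $F(x)=\int\eta(x,y)\,\mathrm{d}P(y)$ and, for each block $j$, consider the centered empirical process
\begin{equation*}
G_{m,j}(x)\ :=\ [F_{n,j}(x)-F_{n,j}(x^{*})]-[F(x)-F(x^{*})].
\end{equation*}

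Since $\eta=d^{2}$ on an NPC space satisfies (A1) and (A2) with $l=d$, $K=\beta=1$ (see \cref{Subsection 4.1}), the variance inequality gives $F(x)-F(x^{*})\geq d(x,x^{*})^{2}$. Repeating the empirical process argument underlying the proof of Theorem~\ref{thm_inf_conv} on a single block of size $m$ (with tail level $1/2-q$ in place of $\Delta$), and peeling dyadically over annuli $\{x:d(x,x^{*})\asymp r\}$ using the entropy bound (B2), one obtains the following block-wise claim: with probability at least $1/2+q$,
\begin{equation*}
|G_{m,j}(x)|\ <\ d(x,x^{*})^{2}\quad\text{for every }x\in\cM\text{ with }d(x,x^{*})>R_{q,\zeta}.
\end{equation*}
The threshold $R_{q,\zeta}$ is precisely the scale at which the empirical process bound across every annulus becomes dominated by the quadratic excess risk; substituting $m=\lfloor n/k\rfloor$ into the two regimes of the rate $\rho_{m}$ from Theorem~\ref{thm_inf_conv}---with a $\log n$ factor retained when $\zeta=1$---and collecting $q$-dependent factors into $c_{q,\zeta}$ produces exactly the expressions displayed in \eqref{mmbound2}.

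Call block $\mathcal{B}_{j}$ \emph{good} when the displayed block-wise inequality holds. On the good event, $F_{n,j}(x)-F_{n,j}(x^{*})=G_{m,j}(x)+[F(x)-F(x^{*})]>0$ strictly for every $x$ with $d(x,x^{*})>R_{q,\zeta}$. Since the $k$ blocks are independent, the number of good blocks stochastically dominates $\operatorname{Binomial}(k,\tfrac{1}{2}+q)$, and Hoeffding's one-sided inequality together with the choice of $k$ yields
\begin{equation*}
\mathbb{P}\bigl(\#\text{good blocks}\leq k/2\bigr)\ \leq\ \exp(-2kq^{2})\ \leq\ \Delta.
\end{equation*}
On the complementary event, strictly more than $k/2$ blocks are good, and therefore $F_{n,j}(x^{*})<F_{n,j}(x)$ holds on more than $k/2$ blocks for every $x$ with $d(x,x^{*})>R_{q,\zeta}$---the exact statement that $x^{*}$ defeats every such $x$ while no such $x$ defeats $x^{*}$. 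The main obstacle is the block-wise empirical process bound on $|G_{m,j}|$: it requires a careful peeling argument over dyadic annuli centered at $x^{*}$, with each annulus handled by a Talagrand/Bousquet-type inequality fed by the entropy estimate (B2); apart from this step, which essentially replays the proof of Theorem~\ref{thm_inf_conv} at the relabelled tail level $1/2-q$, the remaining computations are routine.
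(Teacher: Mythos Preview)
Your high-level scaffolding (independent block-wise events of probability $\ge 1/2+q$, then Hoeffding) matches the paper, but the crucial block-wise step is handled differently, and your version requires more than you acknowledge. You propose to control the full empirical process $|G_{m,j}(x)|$ uniformly over the unbounded region $\{x:d(x,x^{*})>R_{q,\zeta}\}$ via dyadic peeling. That is not what Theorem~\ref{thm_inf_conv} gives you: its proof bounds $\phi_{m}(\delta_{m})$ only at the (random) minimizer, not $\phi_{m}(\delta)$ simultaneously for every $\delta$ beyond a threshold, so ``replaying Theorem~\ref{thm_inf_conv} at tail level $1/2-q$'' does not deliver your displayed uniform bound without an additional union/peeling layer and a separate accounting of constants.

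The paper sidesteps this entirely. It never bounds $G_{m,j}$ uniformly in $x$. Instead, it bounds the single scalar $F_{n,j}(x^{*})-F_{n,j}(Z_{j})$---the within-block excess risk of $x^{*}$ against the block Fr\'echet mean $Z_{j}$---by invoking the proof of Theorem~\ref{thm_inf_conv} on block $j$ with $n$ replaced by $m$ and $\Delta$ by $(1-2q)/2$, obtaining $\mathbb{P}(F_{n,j}(x^{*})-F_{n,j}(Z_{j})\le \varepsilon_{q,\zeta}^{2})\ge 1/2+q$ for an explicit $\varepsilon_{q,\zeta}$. The uniformity over $x$ then comes \emph{deterministically} from the CN inequality: evaluating it at the midpoint of the geodesic from $x^{*}$ to $x$ gives $F_{n,j}(x)-F_{n,j}(Z_{j})\ge -(F_{n,j}(x^{*})-F_{n,j}(Z_{j}))+d(x,x^{*})^{2}/2$, so on the scalar event one has $F_{n,j}(x)>F_{n,j}(x^{*})$ for every $x$ with $d(x,x^{*})>2\varepsilon_{q,\zeta}$. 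A short calculation then shows $2\varepsilon_{q,\zeta}\le R_{q,\zeta}$ with the stated $c_{q,\zeta}$. The CN-inequality trick is the key idea you are missing; it converts the uniform empirical-process statement you want into a one-dimensional bound that Theorem~\ref{thm_inf_conv} supplies directly.
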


\begin{corollary}\label{cor_inf_mom}
Assume (B2) with some constants $A>0$ and $\zeta \ge 1$. Let $\Delta \in (0,1)$ and $q\in(0,1/2)$. Let $k$ denote the number of blocks $\mathcal{B}_j$.
If $k=\lceil 1/(2q^2) \log(1/\Delta)\rceil$, then it holds that $d(x_{MM},x^*) \leq R_{q,\zeta}$ with probability at least $1-\Delta$,
where $R_{q,\zeta}$ is the constant defined at \eqref{mmbound2}.
\end{corollary}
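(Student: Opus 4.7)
The plan is to deduce Corollary~\ref{cor_inf_mom} from Theorem~\ref{thm_inf_mom} by exactly the same contradiction argument that the authors laid out between Theorem~\ref{thm_fin_mom} and Corollary~\ref{cor_fin_mom}; the only substitution is the radius $R_q$ by $R_{q,\zeta}$ and the underlying complexity assumption (B1) by (B2). No new analytic work is required: everything quantitative is already packaged into Theorem~\ref{thm_inf_mom}, and the remainder is a purely combinatorial consequence of the definitions of defeating region and of $x_{MM}$.

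First, fix $\Delta \in (0,1)$ and $q \in (0,1/2)$, and take $k=\lceil 1/(2q^2)\log(1/\Delta)\rceil$, so that Theorem~\ref{thm_inf_mom} applies. Let $\cE$ be the event on which, simultaneously, every $x\in \mathcal{M}$ with $d(x,x^*)>R_{q,\zeta}$ is defeated by $x^*$ and none of those $x$ defeats $x^*$. Theorem~\ref{thm_inf_mom} guarantees $\mathbb{P}(\cE)\ge 1-\Delta$, and this is the only probabilistic input I will use.

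Next, working deterministically on $\cE$, I claim $r_{x^*}\le R_{q,\zeta}$. Indeed, by the characterization $r_{x^*}=\sup\{d(x^*,a): a\text{ defeats }x^*\}$ recalled in \cref{existence-of-xMM}, any $a$ with $d(x^*,a)>R_{q,\zeta}$ fails to defeat $x^*$ on $\cE$, so $S_{x^*}\subset B(x^*,R_{q,\zeta})$. Now suppose, toward a contradiction, that on $\cE$ one has $d(x_{MM},x^*)>R_{q,\zeta}$. Then on $\cE$ the point $x^*$ defeats $x_{MM}$, so $x^*\in S_{x_{MM}}$, and therefore $S_{x_{MM}}\not\subset B(x_{MM},R_{q,\zeta})$, which gives $r_{x_{MM}}>R_{q,\zeta}\ge r_{x^*}$. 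This contradicts the defining minimality property $r_{x_{MM}}\le r_{x^*}$ of $x_{MM}$ in \eqref{mmdef}. Hence $\cE\subset\{d(x_{MM},x^*)\le R_{q,\zeta}\}$, and the corollary follows.

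I do not expect any real obstacle here: the delicate work (the block-level concentration of $F_{n,j}$ built on the empirical Fr\'echet mean bound \cref{thm_inf_conv} and the quadruple/variance inequalities (A1)--(A2)) is already done inside Theorem~\ref{thm_inf_mom}; the corollary is only the translation from a pairwise defeating statement to a bound on $d(x_{MM},x^*)$. The one point worth flagging is that the argument tacitly uses the existence of a minimizer in \eqref{mmdef}, which is the content of \cref{existence-of-xMM} and in particular requires the continuity and coercivity of $\eta$ (valid for $\eta=d^{2}$, or more generally $d^{\alpha}$, on an NPC space).
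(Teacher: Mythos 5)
Your argument is exactly the one the paper gives between Theorem~\ref{thm_fin_mom} and Corollary~\ref{cor_fin_mom}, applied verbatim with $(R_q,\text{(B1)},\text{Theorem~\ref{thm_fin_mom}})$ replaced by $(R_{q,\zeta},\text{(B2)},\text{Theorem~\ref{thm_inf_mom}})$, and it is correct. The paper omits the written proof of Corollary~\ref{cor_inf_mom} precisely because the deduction is identical; your additional remark about existence of the minimizer in \eqref{mmdef} via \cref{existence-of-xMM} is a sensible and accurate note.
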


As in the case of the empirical Fr\'echet mean $x_n$ for infinite-dimensional $\cM$, see \eqref{concent-mean-power-inf},
decreasing the curvature of $\cM$ (increasing $\zeta$) results in slowing down the rate of convergence of $x_{MM}$ to $x^*$.
We can also make a similar remark for the dependence of the constant factor $c_{q,\zeta}$ on $q \in (0,1/2)$ as in the discussion of \cref{cor_fin_mom}.
In the infinite-dimensional case, however, $c_{q,\zeta}$ is minimized at some point $q \in (0,1/2)$.

We note that the constants $C_q$ and $c_{q,\zeta}$ in \cref{thm_fin_mom,thm_inf_mom},
respectively, may not be optimal. One might improve them
by carefully sharpening of various inequalities in the proofs of the theorems.
Rather than optimizing the constants, we focus on deriving {\it exponential} concentration.
It is also noteworthy that our results do not involve terms such as
$\operatorname{tr}(\Sigma_{X})$, as opposed to the radius of concentration derived by Lugosi \cite{lugosi2019sub} for the case $\cM=\mathbb{R}^D$,
since we do not assume any differential structure for the underlying NPC space.
The rates of concentration in \cref{cor_fin_mom,cor_inf_mom} are not optimal when $\cM$ is a Hilbert space unless $\cM=\mathbb{R}$.
In the latter case, the optimal rate of concentration is known to be $O ( \sqrt{\text{tr}(\Sigma_{X})/n}+\sqrt{\|\Sigma_{X}\| \log(1/\Delta)/n} )$ as in \eqref{sub-Gauss_conc}.
It is noteworthy that $\sigma_{X}^2=\text{tr}(\Sigma_{X})$ when $\cM$ is a Hilbert space. However, metric spaces without a differential structure do not have
an equivalent of the covariance matrix $\Sigma_{X}$ in general. Moreover, $\text{tr}(\Sigma_{X})$ in \cite{lugosi2019sub} arises from the \textit{dual Sudakov inequality},
which accounts for the covering number of a sphere $r \cdot S^{D-1}$ with respect to the norm $\|\cdot\|_{2,P}$ in terms of $r$ and $\text{tr}(\Sigma_{X})$.
The inequality is based on the linear structure of $\mathbb{R}^{D}$ and the fact that  $\|\cdot\|_{2,P}$ is translation invariant, therefore
it is no longer valid for non-vector spaces. Hence, even for Hadamard manifolds where a differential structure is available, it seems intractable to obtain an inequality
that corresponds to the dual Sudakov inequality.

Now, we present a theorem that gives the {\it breakdown point} of $x_{MM}$. The breakdown point of an estimator is the smallest proportion of data corruption that can upset the estimator completely.
It tells the level of resistance by an estimator against data corruption and is a popular measure of robustness in statistics. Let $\cX_n=\{X_1,\ldots,X_n\}$.
For a configuration $\{i(1), \ldots, i(\ell)\}\subset \{1, 2, \ldots, n\}$, let
$\tilde \cX_n(i(1), \dots, i(\ell))$ denote the modification of $\cX_n$
for which $X_{i(j)}$ for $1 \le j \le \ell$ in $\cX_n$ are replaced by $\tilde X_{i(j)}$,
respectively. For an estimator $\hat{x}$, the breakdown point of $\hat x$ is defined as
\begin{align*}
\varepsilon_{n}^*&:=\frac{1}{n} \min \Big\{\ell: \mbox{there exists a dataset } \cX_n \mbox{ and a configuration } \{i(1), \ldots, i(\ell)\} \mbox{ such that }\\
&\hspace{4.5cm}\sup_{\tilde X_{i(1)}, \dots, \tilde X_{i(\ell)}} d\left(\hat{x}(\cX_n), \,\hat{x}\big(\tilde \cX_n(i(1), \dots, i(\ell))\big)\right)
=\infty \Big\}.
\end{align*}
For the above definition to make sense, we consider the case where ${\rm diam}(\cM) = \infty$. The following theorem demonstrates that the breakdown point $\varepsilon_{n}^*$
of $x_{MM}$ for an NPC space $(\cM,d)$ equals that of the median-of-means tournament for $\cM=\mathbb{R}^{D}$.

\begin{theorem}\label{mom_breakdown}
Let $(\cM,d)$ be an NPC space where $X_{1},\dots, X_{n}$ take values. Let $k$ denote the number of blocks $\mathcal{B}_j$.
Then, the breakdown point of $x_{MM}$ associated with $\eta=d^2$
is independent of partition $\{\cB_j: 1 \le j \le k\}$ and equals
$\varepsilon_{n}^*=n^{-1}\cdot \lceil (k+1)/2\rceil$.
\end{theorem}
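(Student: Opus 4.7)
Set $\ell^* = \lceil(k+1)/2\rceil$, which is the smallest integer strictly exceeding $k/2$ and hence the least number of blocks constituting a strict majority of the $k$ blocks. My plan is to prove $\varepsilon_n^* = \ell^*/n$ via two matching inequalities; both constructions below are independent of the chosen partition, which also yields the partition-independence claim.

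\textbf{Upper bound} $\varepsilon_n^* \le \ell^*/n$: fix any $\cX_n$ and any partition, pick $\ell^*$ distinct blocks $\cB_{j_1},\dots,\cB_{j_{\ell^*}}$ and one index $i(s)\in\cB_{j_s}$ in each. Let $\gamma:[0,\infty)\to\cM$ be a unit-speed geodesic ray emanating from $x^*$, and replace the chosen data points all by $\gamma(t)$, sending $t\to\infty$. Two geometric facts drive the argument: (i) applying the NPC variance inequality inside each corrupted block with anchor $\gamma(t/m)$ gives $d(\tilde Z_{j_s}^{(t)}, \gamma(t/m)) = O(1)$, uniformly in $s$ and $t$, so that all corrupted block Fr\'echet means drift along $\gamma$ at linear rate $1/m$; (ii) for any $x$ with $d(x,x^*)\le R$, the block-level variance inequality yields $F_{n,j_s}(x)-F_{n,j_s}(\tilde Z_{j_s}^{(t)})\ge d(x,\tilde Z_{j_s}^{(t)})^2 = \Theta(t^2/m^2)$, while the quadruple inequality bounds $|F_{n,j_s}(\gamma(t/m))-F_{n,j_s}(\tilde Z_{j_s}^{(t)})|$ by a quantity linear in $t/m$. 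Combining (i)--(ii), for $t$ large, $\gamma(t/m)$ beats every bounded $x$ in all $\ell^*$ corrupted blocks --- a strict majority --- so $\gamma(t/m)$ defeats $x$ and $r_x\ge d(x,\gamma(t/m))\to\infty$. A parallel analysis shows $r_{\gamma(t/m)}$ stays bounded uniformly in $t$: any $b$ defeating $\gamma(t/m)$ must win in the corrupted majority and therefore lies in a bounded neighborhood of $\gamma(t/m)$. Since $x_{MM}$ minimizes $r_\cdot$, it cannot stay in any fixed ball $B(x^*,R)$, so $d(x_{MM},x^*)\to\infty$.

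\textbf{Lower bound} $\varepsilon_n^* \ge \ell^*/n$: for any corruption of $\ell=\ell^*-1$ indices, at most $\ell$ blocks are affected, leaving $k-\ell\ge \lceil k/2\rceil$ clean; moreover $\ell\le k/2$ falls short of the strict-majority threshold. For any $a$ with $d(a,x^*)>R_0$ (a threshold depending only on the unchanged clean block Fr\'echet means), $F_{n,j'}(x^*)<F_{n,j'}(a)$ strictly in every clean block $j'$, capping $a$'s wins over $x^*$ at $\ell\le k/2$, which is not a strict majority. Hence $a$ does not defeat $x^*$, giving $r_{x^*}\le R_0$ uniformly in the corruption. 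The minimality of $x_{MM}$ then gives $r_{x_{MM}}\le r_{x^*}\le R_0$. Applying the same clean-block argument with $\tilde x_{MM}$ playing the role of $a$ shows $x^*\in S_{\tilde x_{MM}}$ whenever $d(\tilde x_{MM},x^*)>R_0$, which combined with $r_{\tilde x_{MM}}\le R_0$ yields the contradiction $d(\tilde x_{MM},x^*)\le r_{\tilde x_{MM}}\le R_0$. Therefore $d(x_{MM},x^*)\le R_0$ uniformly over all admissible corruptions.

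The most delicate step is the quantitative control of $\tilde Z_{j_s}^{(t)}$ in part (i) of the upper bound: one needs an $O(1)$ bound on $d(\tilde Z_{j_s}^{(t)},\gamma(t/m))$ that is uniform in $s$ and $t$, which I would obtain by applying the NPC variance inequality within each corrupted block with anchor $\gamma(t/m)$ and using that the clean portion of each block has diameter determined by $\cX_n$ alone. A secondary subtlety, in the lower bound, is the boundary case where the numbers of clean and corrupted blocks coincide ($k$ even, $\ell=k/2$); here one must exploit the \emph{strict} inequality ``more than $k/2$ blocks'' in the definition of ``defeats'' to rule out the tie scenario in which $a$ combines all corrupted wins with a single clean win.
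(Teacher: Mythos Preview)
Your overall two-inequality plan matches the paper's, and your upper bound uses the same corruption scheme (one point per block in $\ell^*$ blocks, all sent to a common point on a geodesic ray). The paper, however, never locates the corrupted block means $\tilde Z_{j_s}^{(t)}$: it works directly with the functionals $F_{n,j}$, using the triangle inequality to show $\tilde\gamma_t$ beats any bounded $x$ in every corrupted block for $t<2/m$, and the CN inequality to cap $r_{\tilde\gamma_{1/m}}$, obtaining the clean numerical contradiction $\liminf \tilde r_{\tilde x_{MM}}/\tilde D\ge 2/m>\sqrt{2}/m\ge\limsup \tilde r_{\tilde\gamma_{1/m}}/\tilde D$. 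Your route through (i) is more indirect, and the one-line justification for $d(\tilde Z_{j_s}^{(t)},\gamma(t/m))=O(1)$ is not quite enough: the variance inequality gives $d(\gamma(t/m),\tilde Z_{j_s}^{(t)})^2\le F_{n,j_s}(\gamma(t/m))-F_{n,j_s}(\tilde Z_{j_s}^{(t)})$, but both terms on the right are of order $t^2$, so you still need a separate argument that their difference is $O(1)$.

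There is a genuine gap in your lower bound. Your final step asserts that if $d(\tilde x_{MM},x^*)>R_0$ then $x^*\in S_{\tilde x_{MM}}$, i.e.\ $x^*$ defeats $\tilde x_{MM}$. But ``$x^*$ defeats $\tilde x_{MM}$'' requires $x^*$ to win in \emph{strictly more than} $k/2$ blocks, and your clean-block argument only guarantees wins in the clean blocks, of which there may be exactly $k/2$ when $k$ is even and the $\ell=k/2$ corruptions land in $k/2$ distinct blocks. The strict-majority clause that saves the first half of your argument (capping $a$'s wins at $k/2$) works \emph{against} you here: $k/2$ clean wins for $x^*$ is not a strict majority, and the adversary is free to arrange the corrupted blocks so that $x^*$ loses in all of them. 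Thus you cannot conclude $x^*\in S_{\tilde x_{MM}}$, and the chain $d(\tilde x_{MM},x^*)\le r_{\tilde x_{MM}}\le R_0$ breaks. Your ``secondary subtlety'' paragraph addresses only the first half and does not touch this issue.

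The paper avoids this by not trying to exhibit a specific defeater of $\tilde x_{MM}$. Instead, after bounding $\tilde r_{Z_j}$ for a clean block mean $Z_j$ (hence $\tilde r_{\tilde x_{MM}}$), it proves the complementary statement that $\tilde r_x\to\infty$ as $d(x,O)\to\infty$, via the lower bound $\tilde r_x\ge \mathrm{rad}_x\bigl(\bigcap_j\{y:F_{n,j}(y)\le F_{n,j}(x)\}\bigr)$, a set that depends only on the uncorrupted data and contains a fixed point once $x$ is far enough. Combining the two gives the uniform bound on $d(O,\tilde x_{MM})$ without ever needing any single point to defeat $\tilde x_{MM}$.
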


One may be interested in studying the concentration properties of geometric-median-of-means when some portion of the dataset are corrupted. This has been done by
\cite{depersin2021robustness} for $\cM=\mathbb{R}^D$. Its extension to NPC spaces is a challenging topic for future study.

\subsection{Cases with \texorpdfstring{$\eta=d^\alpha$}{the power transform metric}}\label{Section 5.2}

Here, we consider a more general setting where $\eta=d^{\alpha}$ for $1<\alpha \leq 2$.
We note that the CN inequality in \cref{Subsection 4.1} plays an important role in establishing \cref{thm_fin_mom,thm_inf_mom}. For the general case with $\eta=d^{\alpha}$,
we use the power transform CN inequality established in \cref{thm_gen_CN}.

The general estimators are built on the following notion of `defeat by fraction'.
The definition applies not only to $\eta=d^\alpha$ but also to a general measurable function $\eta:\mathcal{M}\times\mathcal{M} \rightarrow \mathbb{R}$.

\begin{definition}\label{weakdefeatdef}
Let $\rho$ be a positive real number. For $a,b \in \mathcal{M}$, we say that `$a$ defeats $b$
by fraction $\rho$'
if $F_{n,j}(a) \le \rho \cdot F_{n,j}(b)$ for more than $k/2$ blocks $\mathcal{B}_j$.
For $x \in \mathcal{M}$, let
\begin{align*}
S_{\rho,x}&=\{a \in \mathcal{M} : \text{a defeats x by fraction } \rho\},\\
\quad r_{\rho,x}&=\min \{r>0 : S_{\rho,x} \subset B(x,r)\}\\
&= \max\{d(x,a): a \in \mathcal{M} \text{ defeats } x \mbox{ by fraction } \rho\}.
\end{align*}
We call $S_{\rho,x}$ the `$x$-defeating-by-$\rho$ region' and $r_x$ the `$x$-defeating-by-$\rho$ radius'.
The estimator $x_{\rho,MM}$ of $x^*$ is then defined by
\begin{equation*}
x_{\rho,MM} \in \argmin_{x \in \mathcal{M}} r_{\rho,x}.
\end{equation*}
We call it `$\rho$-geometric-median-of-means', or simply `$\rho$-median-of-means' if there is no confusion.
\end{definition}

Clearly, the case $\rho=1$ in the above definition coincides with \cref{defeatdef}.
By defintion, for any $0<\rho_{1}<\rho_{2}$, if $a$ defeats $b$ by fraction $\rho_1$, then $a$ defeats $b$ by fraction $\rho_2$.
Therefore, for any fixed $x \in \mathcal{M}$, the $x$-defeating-by-$\rho$ region $S_{\rho,x}$ increases as $\rho$ increases, and $\rho \mapsto r_{\rho,x}$ is a monotone increasing function.

For $0<\rho<1$, the $x$-defeating-by-$\rho$ region does not contain $x$ since $S_{\rho,x}$ collects those points in $\cM$ that are `strictly better' than $x$.
If $\rho$ is too small, $S_{\rho,x}$ can be an empty set for some $x \in \mathcal{M}$, in which case $r_{\rho, x}=0$.
We note that the two events `$a$ defeats $b$ by fraction $\rho$' and `$b$ defeats $a$ by fraction $1/\rho$' do not complement each other,
but either of the two always occurs. Both can occur simultaneously, and if so then there exists at least one $j$ such that $F_{n,j}(a) = \rho \cdot F_{n,j}(b)$.
As in the case of $\rho=1$, the minimum of $r_{\rho,x}$ over $x \in \cM$ is attained at some point in $\cM$ when $\eta:\mathcal{M} \times \mathcal{M} \rightarrow \mathbb{R}$ is continuous.

To state a generalization of \cref{thm_fin_mom} to the case $\eta=d^\alpha$, put
\begin{equation*}
M_{\alpha,\rho}=\sup\left\{ \delta^{1-\alpha/2} t^{\alpha/2} (1-t)^{\alpha/2} : 0 < t < 1, \delta > 0, \frac{1-(1+\delta)^{1-\alpha/2} (1-t)^{\alpha/2}}{(1+\delta)^{1-\alpha/2}\, t^{\alpha/2}} \geq \rho\right\}.
\end{equation*}
Note that $M_{\alpha,\rho}=1/4$ for $\alpha=2$ and $\rho \le 1$ since for any $0 < t < 1$ and $\delta > 0$,
\[
\frac{1-(1+\delta)^{1-2/2} (1-t)^{2/2}}{(1+\delta)^{1-2/2} \,t^{2/2}} =\frac{t}{t}=1.
\]
However, for $0<\alpha<2$, we note that $t^{\alpha/2}+(1-t)^{\alpha/2}>1$ for all $0<t<1$ and thus
\begin{equation}\label{Malphac}
\frac{1-(1+\delta)^{1-\alpha/2} (1-t)^{\alpha/2}}{(1+\delta)^{1-\alpha/2} \,t^{\alpha/2}} < 1
\end{equation}
for all $0 <t < 1$ and $\delta >0$.
Hence, taking $\rho\ge 1$ when $\eta=d^\alpha$ for $0<\alpha<2$,
as (\ref{Malphac}) shows, would give $M_{\alpha,\rho}=\sup \emptyset=-\infty$.
In fact, we find that the derivation of exponential concentration is intractable for $x_{\rho,MM}$ with $\rho \ge 1$ when $1<\alpha<2$, which is why
we introduce the new notions of `defeat by fraction' and `$\rho$-geometric-median-of-means estimator'.
\cref{Malphafigure} demonstrates the shapes of $M_{\alpha, \rho}$ as a function of $\rho$ for several choices of $\alpha$.
It also depicts $M_{\alpha, \rho}^{-1/\alpha}$ on the log scale that appears in the constant factors in the concentration inequalities
in the following theorems and corollaries.

\begin{figure}%[ht!]
\centering
\includegraphics[width=10cm,height=5cm]{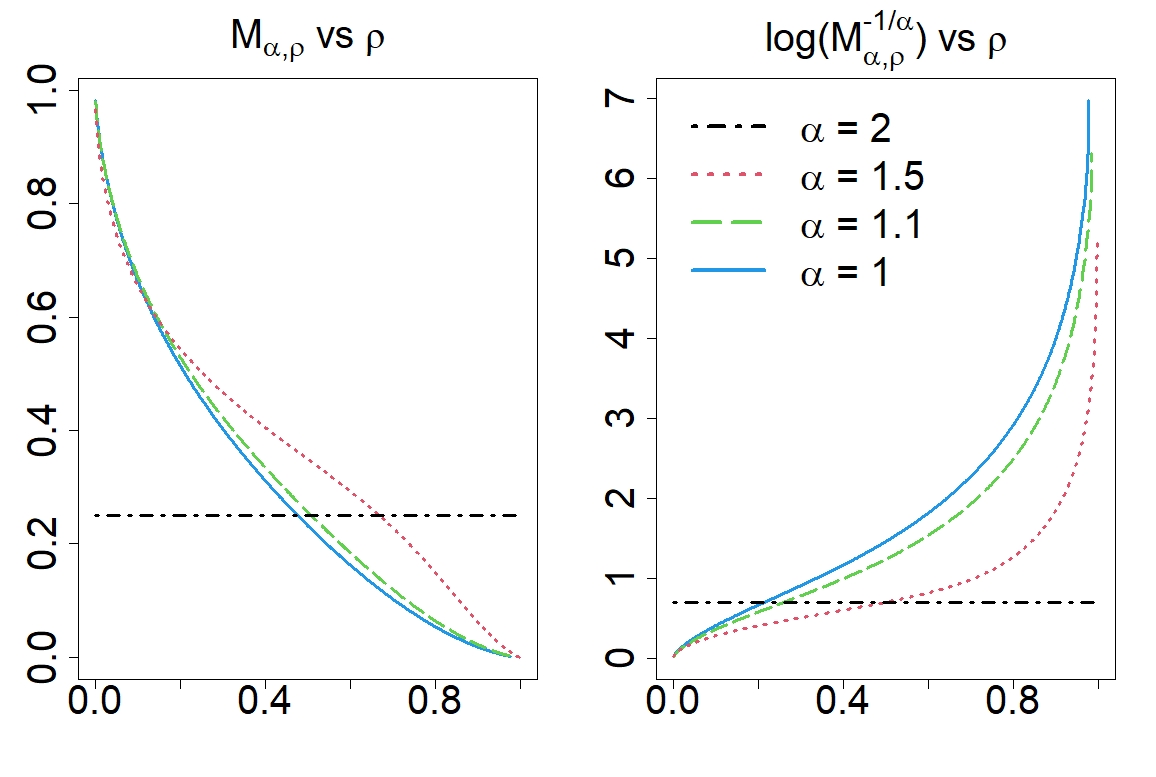}
\caption{The shapes of $M_{\alpha, \rho}$ (left) and $\log M_{\alpha, \rho}^{-1/\alpha}$ (right)
as functions of $\rho$ for $\alpha=1/1.1/1.5/2$ (solid/dashed/dotted/dot-dashed).}
\label{Malphafigure}
\end{figure}

\begin{theorem}\label{thm_fin_power_mom}
Assume (B1) with some constants $A,D>0$ and that there exists a constant $B_{\alpha}>0$ such that
\begin{align}\label{cond-var-ineq}
d\left(x, x^{*}\right)^{\alpha} \leq \frac{1}{B_{\alpha}} \int_{M}\left(d(x, y)^{\alpha}-d\left(x^{*}, y\right)^{\alpha} \right)\mathrm{d} P(y).
\end{align}
Let $\rho \in (0,1)$ for $\alpha \in (1,2)$ or $\rho=1$ when $\alpha=2$.
Also, let $\Delta \in (0,1)$ and $q\in(0,1/2)$.
Put $K_{\alpha}=\alpha^2 2^{-2\alpha+2}B_{\alpha}^{-2+2/\alpha}$. Let $k$ denote the number of blocks $\mathcal{B}_j$.
If $k=\lceil 1/(2q^2) \log(1/\Delta)\rceil$, then it holds that, with probability at least $1-\Delta$,
$x^*$ defeats by fraction $1/\rho$ all $x \in \mathcal{M}$ with $d(x,x^*)>R_{q,\alpha,\rho}$ but any such $x$ does not defeat $x^*$ by fraction $\rho$,
where
\begin{equation}\label{mmbound3}
\begin{split}
&R_{q,\alpha,\rho}=C_{q,\alpha,\rho} \,\sigma_{X}\sqrt{\frac{\log(1/\Delta)}{n}},\\
&C_{q,\alpha,\rho}= M_{\alpha,\rho}^{-1/\alpha}\cdot \frac{16\sqrt{2K_{\alpha}}}{q} \left(24\sqrt{AD}+\frac{2}{\sqrt{1-2q}}\right).
\end{split}
\end{equation}
\end{theorem}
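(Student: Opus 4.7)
My plan is to extend the argument behind \cref{thm_fin_mom} to the power transform setting by using the power transform CN inequality of \cref{thm_gen_CN} in place of the classical one. By the discussion following \cref{prop_gen_var_ineq}, the hypothesis \eqref{cond-var-ineq} together with the Sch\"otz bound \eqref{power-quad-ineq} makes (A1)--(A2) hold with $l(y,z)=\alpha 2^{-\alpha+1}d(y,z)^{\alpha-1}$, $K=K_\alpha$, and $\beta=2-2/\alpha$, so that $\beta/(2-\beta)=\alpha-1$ and $K^{1/(2-\beta)}=K_\alpha^{\alpha/2}$. First I would apply \cref{thm_fin_conv} to each block $\mathcal{B}_j$ of size $m=n/k$ with per-block failure probability $\Delta_0=1/2-q$, so that the block Fr\'echet mean $Z_j\in\argmin_x F_{n,j}(x)$ lies close to $x^*$ with probability at least $1/2+q$; call this event $\mathcal{G}_j$. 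A Hoeffding bound applied to the independent indicators $\mathbf{1}_{\mathcal{G}_j}$ then ensures, for the stated $k=\lceil (2q^2)^{-1}\log(1/\Delta)\rceil$, that strictly more than $k/2$ blocks are good with probability at least $1-\Delta$.

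On any good block, I would apply \cref{thm_gen_CN} pointwise along the geodesic $\gamma:[0,1]\to\mathcal{M}$ from $x^*$ to $x$ and average over $X_i\in\mathcal{B}_j$ to obtain
\begin{equation*}
F_{n,j}(\gamma_t)\le u\,F_{n,j}(x^*)+v\,F_{n,j}(x)-\delta^{1-\alpha/2}[t(1-t)]^{\alpha/2}\,d(x^*,x)^\alpha
\end{equation*}
with $u=(1+\delta)^{1-\alpha/2}(1-t)^{\alpha/2}$ and $v=(1+\delta)^{1-\alpha/2}t^{\alpha/2}$. Since $F_{n,j}(Z_j)\le F_{n,j}(\gamma_t)$, writing $\epsilon_j:=F_{n,j}(x^*)-F_{n,j}(Z_j)\ge 0$ and rearranging yields $(1-u)F_{n,j}(x^*)+\delta^{1-\alpha/2}[t(1-t)]^{\alpha/2}d(x^*,x)^\alpha\le vF_{n,j}(x)+\epsilon_j$. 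Restricting attention to pairs $(t,\delta)$ with $(1-u)/v\ge\rho$ and dividing by $v$ shows that $\rho F_{n,j}(x^*)<F_{n,j}(x)$ strictly as soon as $\epsilon_j<\delta^{1-\alpha/2}[t(1-t)]^{\alpha/2}d(x^*,x)^\alpha$; optimizing the right-hand coefficient over the admissible pairs recovers precisely $M_{\alpha,\rho}$, so the sufficient condition reads $\epsilon_j<M_{\alpha,\rho}\,d(x,x^*)^\alpha$. A strict inequality of this form simultaneously makes $x^*$ defeat $x$ by fraction $1/\rho$ on that block and prevents $x$ from defeating $x^*$ by fraction $\rho$ there.

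The crux is therefore a high-probability upper bound on $\epsilon_j$ matching $M_{\alpha,\rho}\,R_{q,\alpha,\rho}^\alpha$, which after substituting $m=n/k$ simplifies to $\bigl\{32(24\sqrt{AD}+2/\sqrt{1-2q})\bigr\}^{\alpha}K_\alpha^{\alpha/2}(\sigma_X/\sqrt{m})^\alpha$. I would extract such a bound from the machinery underlying \cref{thm_fin_conv}: using $P_{n,j}f_\eta(Z_j,\cdot)=-\epsilon_j-[F(Z_j)-F(x^*)]$ with both right-hand terms nonnegative, one gets $\epsilon_j\le\sup_{x'\in\mathcal{M}_\eta(\delta^{*}_j)}|P_{n,j}f_\eta(x',\cdot)|$ for any $\delta^{*}_j\ge F(Z_j)-F(x^*)$, and the VC-type chaining provided by (B1) that powers \cref{thm_fin_conv} controls this supremum at precisely the required scale once $\delta^{*}_j$ is pinned down by the fixed-point closure built from \eqref{cond-var-ineq}. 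The main obstacle is the constant bookkeeping through this chaining integral and the self-consistency recursion, so that the entropy factor $24\sqrt{AD}+2/\sqrt{1-2q}$ appears to the correct power $\alpha$ and combines with $M_{\alpha,\rho}^{-1/\alpha}$ and $\sqrt{K_\alpha}$ to produce $C_{q,\alpha,\rho}$; once this is verified, intersecting the per-block empirical-process event with the Hoeffding event completes the proof at probability level $1-\Delta$.
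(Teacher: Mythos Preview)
Your proposal is correct and follows essentially the same route as the paper: bound $\epsilon_j=F_{n,j}(x^*)-F_{n,j}(Z_j)$ on each block via the fixed-point argument inside the proof of \cref{thm_fin_conv} with $K=K_\alpha$ and $\beta=2-2/\alpha$ at confidence $(1-2q)/2$, feed this into the power transform CN inequality, optimize over $(t,\delta)$ to recover $M_{\alpha,\rho}$, and finish with Hoeffding. Your constant bookkeeping is right; in particular $b_m((1-2q)/2)=K_\alpha^{\alpha/2}\varepsilon_q^\alpha$ with $\varepsilon_q$ as in \eqref{ekq-fin}, and $K_\alpha^{1/2}M_{\alpha,\rho}^{-1/\alpha}\varepsilon_q=R_{q,\alpha,\rho}$.

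One small point of presentation: your first paragraph defines $\mathcal{G}_j$ as the event that $Z_j$ lies close to $x^*$ in the sense of \cref{thm_fin_conv} (i.e., a bound on $l(Z_j,x^*)$), but what the argument actually uses---and what you correctly isolate in the third paragraph---is the stronger intermediate event $\{\epsilon_j\le K_\alpha^{\alpha/2}\varepsilon_q^\alpha\}$, which is the $\phi_n(\delta_n)\le b_n(\Delta)$ step from the proof of \cref{thm_fin_conv} rather than its final conclusion. The paper makes exactly this move, citing \eqref{estbound}--\eqref{bNbound} directly; just be sure to define $\mathcal{G}_j$ that way from the outset so the Hoeffding step applies to the right indicator.
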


Recall that \cref{prop_gen_var_ineq} gives a sufficient condition for the existence of $B_{\alpha}>0$ such that (\ref{cond-var-ineq}) holds.
Also, we note that (\ref{cond-var-ineq}) holds with $B_{\alpha}=1$ when $\alpha=2$, see \cref{Subsection 4.1}.
Thus, when $\alpha=2$ and $M_{\alpha,\rho}=1/4$, we have $K_{\alpha}=1$ so that \cref{thm_fin_power_mom} with $\rho=1$ reduces to \cref{thm_fin_mom}.
The following corollary may be derived from \cref{thm_fin_power_mom} as \cref{cor_fin_mom} is from \cref{thm_fin_mom}.

\begin{corollary}\label{cor_fin_power_mom}
Assume the conditions and consider the ranges of $(\rho, \alpha)$, $\Delta$ and $q$ in \cref{thm_fin_power_mom}.
Let $k$ denote the number of blocks $\mathcal{B}_j$.
If $k=\lceil 1/(2q^2) \log(1/\Delta)\rceil$, then it holds that $d(x_{\rho,MM},x^*) \leq R_{q,\alpha,\rho}$ with probability at least $1-\Delta$,
where $R_{q,\alpha,\rho}$ is the constant defined at \eqref{mmbound3}.
\end{corollary}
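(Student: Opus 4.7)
The plan is to derive the present corollary from \cref{thm_fin_power_mom} by imitating, step for step, the deduction of \cref{cor_fin_mom} from \cref{thm_fin_mom} sketched in the paragraph preceding \cref{cor_fin_mom}. Write $R=R_{q,\alpha,\rho}$ for brevity and let $\mathcal{E}$ be the event of probability at least $1-\Delta$ on which the two conclusions of \cref{thm_fin_power_mom} both hold for every $x\in\mathcal{M}$ with $d(x,x^*)>R$.

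On $\mathcal{E}$, the second conclusion says that every $x$ with $d(x,x^*)>R$ fails to defeat $x^*$ by fraction $\rho$, i.e.\ $x\notin S_{\rho,x^*}$. Hence
\[
S_{\rho,x^*}\subset \overline{B}(x^*,R), \qquad r_{\rho,x^*}\le R,
\]
and the defining minimality $x_{\rho,MM}\in\argmin_x r_{\rho,x}$ from \cref{weakdefeatdef} yields $r_{\rho,x_{\rho,MM}}\le r_{\rho,x^*}\le R$. To close the argument I would suppose, for contradiction, that $d(x_{\rho,MM},x^*)>R$ on $\mathcal{E}$ and apply the first conclusion of \cref{thm_fin_power_mom} at $x=x_{\rho,MM}$; combined with the second conclusion at the same point, this should force $x^*\in S_{\rho,x_{\rho,MM}}$, hence $r_{\rho,x_{\rho,MM}}\ge d(x^*,x_{\rho,MM})>R$, contradicting the previous bound. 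Therefore $\mathbb{P}(\mathcal{E}\cap\{d(x_{\rho,MM},x^*)>R\})=0$, which is exactly the claim of the corollary.

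The delicate point is the last step, since \cref{thm_fin_power_mom} only supplies a defeat at the weaker fraction $1/\rho\ge 1$ whereas membership in $S_{\rho,x_{\rho,MM}}$ is calibrated to the sharper fraction $\rho\le 1$. I expect this gap to close at the block-wise level: the majority inequalities $F_{n,j}(x^*)\le \rho^{-1}F_{n,j}(x_{\rho,MM})$ from the first conclusion, together with the majority failure of $F_{n,j}(x_{\rho,MM})\le \rho F_{n,j}(x^*)$ from the second, should force $F_{n,j}(x^*)\le \rho F_{n,j}(x_{\rho,MM})$ on a majority of blocks once one uses that the constant $M_{\alpha,\rho}$ in the theorem is chosen via \cref{thm_gen_CN} precisely so that the two one-sided bounds are complementary on $\mathcal{E}$.
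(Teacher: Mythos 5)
Your first two paragraphs correctly mirror what the paper sketches (it only remarks that the corollary ``may be derived from \cref{thm_fin_power_mom} as \cref{cor_fin_mom} is from \cref{thm_fin_mom}''), and you are right to single out the delicate step: the second conclusion of \cref{thm_fin_power_mom} does give $S_{\rho,x^*}\subset B(x^*,R)$ and hence $r_{\rho,x_{\rho,MM}}\le r_{\rho,x^*}\le R$, but the contradiction needs $x^*\in S_{\rho,x_{\rho,MM}}$, whereas the first conclusion only places $x^*$ in $S_{1/\rho,x_{\rho,MM}}$. Since $\rho<1$, one has $S_{\rho,x}\subset S_{1/\rho,x}$, so membership in the larger region $S_{1/\rho,x_{\rho,MM}}$ tells you nothing about the smaller one $S_{\rho,x_{\rho,MM}}$; the $\rho=1$ argument does not transfer.

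The route you propose for closing the gap does not work. Both conclusions of \cref{thm_fin_power_mom} are extracted from the single block-level inequality $F_{n,j}(x)>\rho\,F_{n,j}(x^*)$, equivalently $F_{n,j}(x^*)<\rho^{-1}F_{n,j}(x)$, on a majority of blocks. When $\rho<1$ one has $\rho\,F_{n,j}(x)<\rho^{-1}F_{n,j}(x)$, so the conjunction of ``$F_{n,j}(x^*)\le\rho^{-1}F_{n,j}(x_{\rho,MM})$ on a majority'' and ``$F_{n,j}(x_{\rho,MM})\le\rho F_{n,j}(x^*)$ fails on a majority'' is implied by the single inequality $F_{n,j}(x^*)<\rho^{-1}F_{n,j}(x_{\rho,MM})$ alone and cannot force the sharper bound $F_{n,j}(x^*)\le\rho\,F_{n,j}(x_{\rho,MM})$: for instance $F_{n,j}(x^*)=0.7\,F_{n,j}(x_{\rho,MM})$ for all $j$ with $\rho=0.5$ satisfies both majority statements on every block while the sharper bound fails on every block. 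The constant $M_{\alpha,\rho}$ in \cref{thm_gen_CN} only tunes the one-sided bound $F_{n,j}(x)>\rho F_{n,j}(x^*)$; it does not make the two bounds ``complementary'' in the sense you invoke. To close the gap one would need either a version of \cref{thm_fin_power_mom} asserting that $x^*$ defeats the far $x$ by fraction $\rho$ (which the stated proof does not supply), or an extra geometric step — e.g.\ exhibiting a point on the geodesic from $x_{\rho,MM}$ toward $x^*$ lying in $S_{\rho,x_{\rho,MM}}$ at distance exceeding $R$ — and neither your write-up nor the paper's sketch provides such a step.
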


The constant factor $C_{q,\alpha,\rho}$ depends on $q$ and $\rho$. As in \cref{cor_fin_mom}
for $x_{MM}$, it is minimized at some point $q \in (0,1/2)$. The minimizing $q$ depends on
$A$ and $D$, but is independent of $\alpha$ and $\rho$. As for the dependence on $\rho$,
we note that $\rho \in (0,1)\mapsto C_{q,\alpha,\rho} \in (0,+\infty)$ is an increasing function
when $1<\alpha<2$,
as is well illustrated by the right panel of \cref{Malphafigure}.
The increasing speed gets extremely fast as $\rho$ approaches to $1$.
Since taking a smaller $\rho$ shrinks the defeating regions $S_{\rho,x}$, it results in
having $x_{\rho,MM}$ stay closer to $x^*$, which explains the result that the radius of
concentration $R_{q,\alpha,\rho}$ gets smaller for smaller $\rho$.

Below, we present versions of \cref{thm_fin_power_mom} and \cref{cor_fin_power_mom} when $\cM$ is of infinite-dimension satisfying the entropy condition (B2).
Again, when $\alpha=2$, we have $K_{\alpha}=1$ and $M_{\alpha,\rho}=1/4$ so that \cref{thm_inf_power_mom} with $\rho=1$ reduces to \cref{thm_inf_mom}.

\begin{theorem}\label{thm_inf_power_mom}
Assume (B2) with some constants $A>0$ and $\zeta\ge 1$ and that there exists a constant $B_{\alpha}>0$ such that \eqref{cond-var-ineq} holds.
Consider the ranges of $(\rho, \alpha)$, $\Delta$ and $q$ in \cref{thm_fin_power_mom}.
Put $K_{\alpha}=\alpha^2 2^{-2\alpha+2}B_{\alpha}^{-2+2/\alpha}$. Let $k$ denote the number of blocks $\mathcal{B}_j$.
If $k=\lceil 1/(2q^2) \log(1/\Delta)\rceil$, then it holds that, with probability at least $1-\Delta$,
$x^*$ defeats by fraction $1/\rho$ all $x \in \mathcal{M}$ with $d(x,x^*)>R_{q,\alpha,\rho}$ but any such $x$ does not defeat $x^*$ by fraction $\rho$,
where
\begin{equation}\label{mmbound4}
\begin{split}
R_{q,\alpha,\rho,\zeta}&=
\begin{cases}
\displaystyle c_{q,\alpha,\rho, 1} \cdot \frac{\log n}{n^{1/2}} \cdot \sigma_X \cdot \sqrt{\log \frac{1}{\Delta}}  \quad &\text{if}\;\;\zeta = 1\\ %%%
\quad \\
\displaystyle c_{q,\alpha,\rho, \zeta} \cdot \frac{1}{n^{1/2\zeta}}\cdot \sigma_X \cdot \left(\log \frac{1}{\Delta}\right)^{1/2\zeta} \quad &\text{if}\;\;\zeta>1,%%%
\end{cases}\\[2mm]
c_{q,\alpha,\rho, \zeta}&= K_{\alpha}^{1/2} M_{\alpha,\rho}^{-1/\alpha}\cdot\frac{C_{A,\zeta}}{q \sqrt{1-2q}}
\end{split}
\end{equation}
and $C_{A, \zeta}$ is the constant that appears in \cref{thm_inf_conv}.
\end{theorem}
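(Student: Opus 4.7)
The plan is to follow the same three-stage template used in the proof of \cref{thm_fin_power_mom}, with the finite-dimensional empirical-process estimate replaced by its infinite-dimensional counterpart from \cref{thm_inf_conv}. Fixing $k = \lceil (2q^{2})^{-1}\log(1/\Delta)\rceil$ blocks of size $m \ge n/k$, every per-block estimate is carried out at scale $m$, and the dependence on $m$ ultimately produces the factor $\rho_m \in \{m^{-1/2}\log m,\, m^{-1/(2\zeta)}\}$ (in the notation of \cref{thm_inf_conv}) that drives the two regimes $\zeta = 1$ and $\zeta > 1$ visible in~\eqref{mmbound4}.

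The first step is to apply the power-transform CN inequality \cref{thm_gen_CN} along the geodesic $\gamma^{x}:[0,1]\to\mathcal{M}$ from $x^{*}$ to $x$. Averaging over block $\mathcal{B}_j$ gives, for every $(t,\delta)\in(0,1)\times(0,\infty)$,
\begin{equation*}
F_{n,j}(\gamma^{x}_{t}) \leq (1+\delta)^{1-\alpha/2}\bigl[(1-t)^{\alpha/2} F_{n,j}(x^{*}) + t^{\alpha/2} F_{n,j}(x)\bigr] - \delta^{1-\alpha/2}[t(1-t)]^{\alpha/2}\, d(x^{*},x)^{\alpha}.
\end{equation*}
On the event that $x$ defeats $x^{*}$ by fraction $\rho$ on block $j$, i.e.\ $F_{n,j}(x) \leq \rho F_{n,j}(x^{*})$, any pair $(t,\delta)$ that is feasible in the set defining $M_{\alpha,\rho}$ makes the coefficient of $F_{n,j}(x^{*})$ bounded by $1$; optimizing over such pairs yields $F_{n,j}(x^{*}) - F_{n,j}(\gamma^{x}_{t}) \geq M_{\alpha,\rho}\, d(x^{*},x)^{\alpha}$, which reduces the defeat event to a one-sided lower deviation for the real-valued empirical process along the geodesic. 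Applying the chaining/Chebyshev argument from the proof of \cref{thm_inf_conv} at block scale $m$ --- with the growth function $l(y,z) = \alpha 2^{-\alpha+1} d(y,z)^{\alpha-1}$ of \cref{Subsection 4.2} and the variance inequality~\eqref{cond-var-ineq} combining into the prefactor $K_{\alpha}^{1/2}$, and with (B2) contributing the factor $C_{A,\zeta}\rho_m$ --- and calibrating $d(x^{*},x) = R_{q,\alpha,\rho,\zeta}$ so that the per-block probability of the bad event is at most $\tfrac12 - q$, produces precisely the constant $c_{q,\alpha,\rho,\zeta} = K_{\alpha}^{1/2} M_{\alpha,\rho}^{-1/\alpha} C_{A,\zeta}/(q\sqrt{1-2q})$ of~\eqref{mmbound4}. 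A Hoeffding tail on the binomial count of bad blocks then yields $\exp(-2q^{2}k) \leq \Delta$ by the choice of $k$; the symmetric statement that $x^{*}$ defeats every such $x$ by fraction $1/\rho$ follows from the same chain of inequalities with the roles of $x$ and $x^{*}$ interchanged.

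The hardest part will be the block-scale empirical-process estimate: carrying the factor $M_{\alpha,\rho}^{-1/\alpha}$ through the chaining/Chebyshev combination with the sharp constants demanded by~\eqref{mmbound4}. When $\alpha\in(1,2)$, the feasibility region defining $M_{\alpha,\rho}$ is nonempty only for $\rho < 1$, as observed around~\eqref{Malphac}, so the restriction $\rho\in(0,1)$ enters precisely to guarantee that a valid $(t,\delta)$ exists. Ensuring that this optimizer meshes with the entropy chaining across the level sets $\cM_{\eta}(\delta)$ --- so that the $\log n$ factor appears only at the critical exponent $\zeta = 1$ and the block-scale rate $n^{-1/(2\zeta)}$ emerges cleanly otherwise --- is the main technical bookkeeping; the remaining pieces mirror the proof of \cref{thm_fin_power_mom}.
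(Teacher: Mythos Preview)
Your proposal is correct and follows essentially the same route as the paper: power-transform CN inequality along $\gamma^{x}$, a per-block concentration from \cref{thm_inf_conv} applied with $K=K_{\alpha}$, $\beta=2-2/\alpha$ and $\Delta$ replaced by $(1-2q)/2$, then Hoeffding across blocks. Two small points are worth sharpening. First, the link between your ``one-sided lower deviation'' and the chaining bound of \cref{thm_inf_conv} goes through the block Fr\'echet mean $Z_{j}$: since $F_{n,j}(\gamma^{x}_{t})\ge F_{n,j}(Z_{j})$, your event $F_{n,j}(x^{*})-F_{n,j}(\gamma^{x}_{t})\ge M_{\alpha,\rho}\,d(x^{*},x)^{\alpha}$ forces $F_{n,j}(x^{*})-F_{n,j}(Z_{j})\ge M_{\alpha,\rho}\,d(x^{*},x)^{\alpha}$, an $x$-free event whose probability is exactly what \cref{thm_inf_conv} controls; making this explicit removes the need to ``mesh'' the $(t,\delta)$-optimization with the entropy chaining, since the two are fully decoupled. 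Second, you do not need to ``interchange the roles of $x$ and $x^{*}$'' for the $1/\rho$ direction: the single strict inequality $F_{n,j}(x)>\rho\,F_{n,j}(x^{*})$ simultaneously says that $x$ fails to defeat $x^{*}$ by fraction $\rho$ and that $x^{*}$ defeats $x$ by fraction $1/\rho$.
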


\begin{corollary}\label{cor_inf_power_mom}
Assume the conditions and consider the ranges of $(\rho, \alpha)$, $\Delta$ and $q$ in \cref{thm_inf_power_mom}.
Let $k$ denote the number of blocks $\mathcal{B}_j$.
If $k=\lceil 1/(2q^2) \log(1/\Delta)\rceil$, then it holds that $d(x_{\rho,MM},x^*) \leq R_{q,\alpha,\rho,\zeta}$ with probability at least $1-\Delta$,
where $R_{q,\alpha,\rho,\zeta}$ is the constant defined at \eqref{mmbound4}.
\end{corollary}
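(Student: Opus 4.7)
The plan is to mimic, in the defeat-by-fraction and infinite-dimensional setting, the contradiction argument that is laid out in the paragraph immediately preceding \cref{cor_fin_mom} and derives \cref{cor_fin_mom} from \cref{thm_fin_mom}. Set $R:=R_{q,\alpha,\rho,\zeta}$ and let $\cE$ denote the high-probability event provided by \cref{thm_inf_power_mom}, so $\mathbb{P}(\cE)\ge 1-\Delta$.

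On $\cE$, the second clause of \cref{thm_inf_power_mom} asserts that no $x$ with $d(x,x^*)>R$ defeats $x^*$ by fraction $\rho$. Consequently $S_{\rho,x^*}\subset B(x^*,R)$ and hence $r_{\rho,x^*}\le R$. Because $x_{\rho,MM}\in\argmin_x r_{\rho,x}$ by definition, this immediately yields
\[
r_{\rho,x_{\rho,MM}}\;\le\;r_{\rho,x^*}\;\le\;R.
\]

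Next I would argue by contradiction: assume that on $\cE$ we have $d(x_{\rho,MM},x^*)>R$. The first clause of \cref{thm_inf_power_mom} at $x=x_{\rho,MM}$, combined with the either-or characterization in the remark following \cref{weakdefeatdef} applied to the strict non-defeat of $x^*$ by $x_{\rho,MM}$, places $x^*$ in $S_{\rho,x_{\rho,MM}}$. Therefore $r_{\rho,x_{\rho,MM}}\ge d(x^*,x_{\rho,MM})>R$, contradicting the displayed bound. Hence $d(x_{\rho,MM},x^*)\le R$ on $\cE$ and the corollary follows.

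The step I expect to be the most delicate is the alignment of fractions when $\rho<1$: \cref{thm_inf_power_mom}'s first clause is phrased in terms of fraction $1/\rho$ from $x^*$'s side, whereas the estimator's defining radius $r_{\rho,x}$ uses fraction $\rho$. Promoting that clause to the strong assertion $x^*\in S_{\rho,x_{\rho,MM}}$ required for the contradiction must be done by exploiting the strict non-defeat in the second clause together with the either-or relation to rule out the borderline configurations in which $F_{n,j}(x^*)$ and $\rho F_{n,j}(x_{\rho,MM})$ agree on too many blocks. In the $\alpha=2$, $\rho=1$ specialization (where $1/\rho=\rho=1$) this subtlety disappears, and the argument reduces verbatim to that of \cref{cor_inf_mom}.
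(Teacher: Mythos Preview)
Your overall strategy matches the paper's: the paper derives \cref{cor_fin_power_mom} (and by the same token \cref{cor_inf_power_mom}) simply by pointing back to the contradiction argument that deduces \cref{cor_fin_mom} from \cref{thm_fin_mom}, without further elaboration. Your steps showing $r_{\rho,x^*}\le R$ from the second clause and then $r_{\rho,x_{\rho,MM}}\le R$ by minimality are exactly right.

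The ``delicate step'' you flag is a genuine gap, and your proposed resolution via the either-or relation does not close it. To obtain $x^*\in S_{\rho,x_{\rho,MM}}$ from the dichotomy ``either $x^*$ defeats $x_{\rho,MM}$ by fraction $\rho$, or $x_{\rho,MM}$ defeats $x^*$ by fraction $1/\rho$'', you must rule out the second alternative. But the second clause of \cref{thm_inf_power_mom} only asserts that $x_{\rho,MM}$ fails to defeat $x^*$ by fraction $\rho$; since $\rho<1/\rho$ when $\rho<1$, defeat by fraction $\rho$ is the \emph{stronger} of the two, so its failure says nothing about defeat by fraction $1/\rho$. The either-or therefore yields no information beyond the first clause itself, which only places $x^*$ in $S_{1/\rho,x_{\rho,MM}}$ and hence gives $r_{1/\rho,x_{\rho,MM}}>R$; the monotonicity $r_{\rho,\cdot}\le r_{1/\rho,\cdot}$ runs the wrong way for a contradiction. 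This is not a tie/borderline issue as you suggest but a structural mismatch between the fraction $\rho$ defining the estimator's radius and the fraction $1/\rho$ in the first clause. The paper's one-line reference ``as \cref{cor_fin_mom} is from \cref{thm_fin_mom}'' glosses over the same point.
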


From \eqref{concent-mean-power-fin} and \eqref{concent-mean-power-inf} in \cref{Subsection 4.2}
we have observed that the concentration rates for the empirical Fr\'echet mean $x_n$ in terms of $\Delta$ and $n$ do not depend on $\alpha \in (1,2]$.
This is also the case with the geometric-median-of-means estimators $x_{MM}$ and $x_{\rho,MM}$, which can be seen by comparing \cref{cor_fin_mom,cor_inf_mom} with \cref{cor_fin_power_mom,cor_inf_power_mom}, respectively.
The dependence pattern of the rate of convergence of $x_{\rho,MM}$ on
the curvature complexity $\zeta$ is the same as $x_n$ and $x_{MM}$.
Also, the dependence of $c_{q,\alpha,\rho,\zeta}$ on $\rho$
is the same as in the finite-dimensional case. For the dependence on $q$,
as in the case of $x_{MM}$, the constant factor is minimized at some point $q \in (0,1/2)$.

\begin{remark}
For NPC spaces $\cM$ with $\eta=d^2$, the curvature complexity $\zeta$ is greater than or equal to $1$ (\cref{prop_hilb_cover}).
However, $\zeta$ may be strictly less than $1$ when $\eta=d^{\alpha}$ with $1<\alpha<2$. In the latter case, one may prove that the radius of concentration $R_{q,\alpha,\rho,\zeta}$
in \cref{thm_inf_power_mom} is given by
\[
R_{q,\alpha,\rho,\zeta}=\displaystyle c_{q,\alpha,\rho, \zeta} \cdot \frac{1}{n^{1/2}} \cdot \sigma_X \cdot \sqrt{\log \frac{1}{\Delta}}, \quad 0<\zeta < 1
\]
for the same constant $c_{q,\alpha,\rho, \zeta}$ given at \eqref{mmbound4}.
\end{remark}

\section{Concluding remarks}

Our results can be applied to any NPC spaces of finite or infinite dimension, such as Hilbert spaces, hyperbolic spaces, manifolds of SPD matrices,
and the Wasserstein space $\mathcal{P}_{2}(\mathbb{R})$, etc. Our work is an extensive generalization of previous works on the median-of-means method.
It is the first attempt that extends the notion of median-of-means to a general class of metric spaces with a rich class of metrics, and
derives exponential concentration for the extended notions of median-of-means in such a general setting.
As we discussed in this paper, we stress that the sample Fr\'echet mean has poor concentration
for non-compact or negatively curved spaces. For such spaces, our geometric-median-of-means estimators are
efficient antidotes to the sample Fr\'echet mean.

For Euclidean or Hilbertian spaces $\cM$, there is a large body of works that study sub-Gaussian mean estimators under only a second moment condition, see \cite{lugosi2019sub,lugosi2019mean,hopkins2020mean} and references therein. For general metric spaces, however, the definition of sub-Gaussianity itself is not available. It is a challenging future topic to generalize the notion of sub-Gaussian performance to more general metric spaces
and investigate the concentration properties of the corresponding empirical Fr\'echet means with the extended notion of sub-Gaussianity.

We admit that there is an issue of algorithmic feasibility with the geometric-median-of-means estimator studied in our paper.
The computational issue is also present in the Euclidean case for the median-of-means tournament estimator, see \cite{lugosi2019sub}.
There are some alternative proposals that are equipped with an efficient algorithm. These include
the geometric median of \cite{minsker2015geometric}, Catoni-Giulini estimator of \cite{catoni2018dimension}, the Hopkins' estimator \cite{hopkins2020mean} and those in the follow-up studies by \cite{cherapanamjeri2019fast, depersin2022robust, lei2020fast}. However, all these estimators are proposed and studied in the case where $\cM$ is a Euclidean or a Banach space.
In particular, the estimators studied in \cite{hopkins2020mean,cherapanamjeri2019fast, depersin2022robust, lei2020fast}
combine the idea of semi-definite programming (SDP) and \textit{r-centrality} (see \cite{hopkins2020mean} for definition), which requires an inner product structure for the underlying space.
For some spaces that admit a tangential structure equipped with a bi-invariant metric, one may borrow the idea of Hopkins \cite{hopkins2020mean} to find a robust Fr\'echet mean estimator equipped with an efficient algorithm.
For instance, if $\cM=\mathcal{S}_{D}^{+}$, the space of symmetric positive-definite matrices, and it is endowed with the log-Euclidean metric, one might project the dataset onto the tangent at the identity $I_{D}$ via the logarithmic map, compute the Hopkins' estimator from the projected data, and then transform the result back to $\mathcal{S}_{D}^{+}$ via the exponential map.
It is straightforward to show that the resulting estimator of the Fr\'echet mean is consistent. However, this is not an estimator of our interest in this paper, and the special treatment would restrict the study to Riemannian manifolds. It is a challenging topic of study to develop an efficient algorithm for the geometric-median-of-means estimator in the general setting of NPC spaces.

%%%%%%%%%%%%%%%%%%%%%%%%%%%%%%%%%%%%%%%%%%%%%%
%% Example with single Appendix:            %%
%%%%%%%%%%%%%%%%%%%%%%%%%%%%%%%%%%%%%%%%%%%%%%
\begin{appendix}
\section{Proofs of theorems}\label{appn}
In the Appendix, we give the complete proofs of Theorems~\ref{thm_fin_conv}--\ref{thm_inf_power_mom}. The proofs of lemmas and Propositions~\ref{thm_gen_CN}--\ref{prop_hilb_cover} in \cref{Section 4}, some of which are used in the proofs of Theorems, can be found in the Supplementary Material.
Throughout the Appendix and the Supplementary Material, we often denote $\int_{\mathcal{S}} f(y) \,\mathrm{d} Q(y)$ simply by $Qf$
for a measurable space $(\mathcal{S}, \mathcal{B})$, a probability measure $Q$ on $\mathcal{B}$
and a measurable function $f: \mathcal{S} \rightarrow \mathbb{R}$.
For instance, $Pf=\mathbb{E}\,(f(X))$ and $P_n f=n^{-1}\sum_{i=1}^n f(X_i)$.
We also suppress the dependence on $\eta$ of $\cM_\eta(\delta)$ and other associated terms.
All lemmas being referred to by `S.xx' are those in the Supplementary Material.

\subsection{Proofs of theorems in \cref{Sec_Empirical means}}\label{proofs-Sec3}

\begin{proof}[Proof of \cref{thm_fin_conv}]
Define $\delta_{n}= P(\eta(x_{n}, \cdot)-\eta\left(x^{*}, \cdot\right))$ and
\begin{align*}
\phi_{n}(\delta)&=\sup \left\{\left(P-P_{n}\right)\left(\eta(x,\cdot)-\eta\left(x^{*}, \cdot\right)\right): x \in \cM(\delta)\right\} \\
&=\sup \left\{\left(P-P_{n}\right)\left(\eta_c(x,\cdot)-\eta_c\left(x^{*},\cdot\right)\right): x \in \cM(\delta)\right\}
\end{align*}
for $\delta \geq 0$.
Since $x_{n}$ is a minimizer of $P_{n} \eta(x,\cdot)$, it follows from the definition of $\phi_{n}$ that
\begin{equation*}
\delta_{n} \leq\left(P-P_{n}\right)\left(\eta\left(x_{n},\cdot\right)-\eta\left(x^{*}, \cdot\right)\right) \leq \phi_{n}\left(\delta_{n}\right).
\end{equation*}
Applying \cref{Lemma1,Lemma2} we get that, with probability at least $1-(\Delta/2)$,
\begin{equation}\label{phibound}
\begin{split}
\phi_{n}(\delta) &\leq 4 \mathbb{E}\,\phi_{n}(\delta)+\frac{8\sqrt{2}\cdot \bar\sigma(\delta)}{\sqrt{n \Delta}}.
\end{split}
\end{equation}

We first get an upper bound to $\mathbb{E}\,\phi_{n}(\delta)$. Let $\{\varepsilon_{i}\}$ be a Rademacher sequence, i.e. random signs independent of $X_{i}$'s.
Then, by the symmetrization of the associated empirical process (see \cite{gine2021mathematical}) we obtain
\begin{equation*}
\begin{split}
\mathbb{E}\,\phi_{n}(\delta)
&\leq 2\,\mathbb{E} \,\bigg(\sup_{x\in \cM(\delta)} n^{-1}\sum_{i=1}^{n} \varepsilon_{i}\left(\eta_c(x,X_{i})-\eta_c(x^*,X_{i})\right)\bigg)  \\
&= 2\,\mathbb{E} \,\bigg(\sup_{x\in \cM(\delta)} n^{-1}\sum_{i=1}^n \varepsilon_{i}\,\eta_c(x,X_{i})\bigg).
\end{split}
\end{equation*}
One can easily check that the Rademacher empirical process $\{Y_{f} : f\in (\mathcal{F}(\delta), \|\cdot\|_{2,P_{n}}) \}$
for the pseudo metric space $(\mathcal{F}(\delta), \|\cdot\|_{2,P_{n}})$ given by
\begin{equation*}
Y_{f}:=\frac{1}{\sqrt{n}}\sum_{i=1}^{n} \varepsilon_{i}f(X_{i})
\end{equation*}
is $\sqrt{n}$-Lipschitz with respect to $\|\cdot\|_{2,P_{n}}$, conditionally on the $X_{i}$’s.
It is also sub-Gaussian. To see this, we note that, for any $a_{1},\dots,a_{n} \in \mathbb{R}$,
\begin{align*}
    \mathbb{E} \bigg(\exp \Big(\sum_{i=1}^{n} a_{i}\varepsilon_{i} \Big) \bigg)
    =\prod_{i=1}^{n} \mathbb{E}e^{a_{i}\varepsilon_{i}}
    =\prod_{i=1}^{n} \frac{e^{a_{i}}+e^{-a_{i}}}{2}
    \le \prod_{i=1}^{n} e^{a_{i}^{2}/2}
    =\exp \Big(\sum_{i=1}^{n} \frac{a_{i}^{2}}{2} \Big),
\end{align*}
where the inequality follows from Taylor's expansion. From this we get that, for any $f,g \in \mathcal{F}(\delta)$ and $\theta \in \mathbb{R}$,
\begin{align*}
    \mathbb{E}\big(e^{\theta\left(Y_{f}-Y_{g}\right)} | X_{1}, \dots, X_{n} \big)
    &=\mathbb{E} \bigg(\exp \Big( \frac{\theta}{\sqrt{n}} \sum_{i=1}^{n} \varepsilon_{i}(f-g)(X_{i}) \Big) \Big| X_{1}, \dots, X_{n} \bigg) \\
    &\leq\exp \bigg(\frac{\theta^{2}}{2n} \sum_{i=1}^{n} (f(X_{i})-g(X_{i}))^{2} \bigg)\\
    &=\exp \Big(\frac{\theta^{2}}{2} \|f-g\|_{2,P_{n}}^{2} \Big).
\end{align*}
Thus, $Y_{f}$ satisfies the conditions of \cref{Lemma3} (see \cite{ahidar2019rate}).
Applying \cref{Lemma3} with (B1) and using the inequalities for $H_{\delta}$ given in \cref{Lemma2}, we get
\begin{equation}\label{exptbound}
\begin{split}
\mathbb{E}\,\phi_{n}(\delta) &\leq 2\,\mathbb{E}\,\inf_{\varepsilon \geq 0}\left(2\varepsilon+\frac{12}{\sqrt{n}}
\int_{\varepsilon}^{\infty} \sqrt{\log N(u, \mathcal{F}(\delta),\|\cdot\|_{2,P_{n}})} \rd u \right) \\
&\leq 2\,\mathbb{E}\,\inf_{\varepsilon \geq 0}\left(2\varepsilon+\frac{12}{\sqrt{n}} \int_{\varepsilon}^{\|H_{\delta}\|_{2,P_{n}}} \sqrt{D\log \left(\frac{A\|H_{\delta}\|_{2,P_{n}}}{u}\right)} \,\rd u \right) \\
&= 2\, \mathbb{E}\left(\|H_{\delta}\|_{2,P_{n}}\right)\cdot \inf_{\varepsilon' \geq 0}\, \left(2\varepsilon'+ \frac{12}{\sqrt{n}} \int_{\varepsilon'}^{1} \sqrt{D\log \left(\frac{A}{u}\right)} \,\rd u \right)\\
&\leq 48 \,\mathbb{E}\left(\|H_{\delta}\|_{2,P_{n}}\right)\cdot \sqrt{\frac{AD}{n}}\\
&\leq 48\,\bar\sigma(\delta)\sqrt{\frac{AD}{n}},
\end{split}
\end{equation}
where in the third inequality we have used $\log x \leq x-1 \leq x$ for $x>0$.

The inequalities \eqref{phibound} and \eqref{exptbound} imply that, with probability at least $1-(\Delta/2)$,
\begin{equation*}
\begin{split}
\phi_{n}(\delta) &\leq \bar\sigma(\delta)\left( 192\sqrt{\frac{AD}{n}}+\frac{8\sqrt{2}}{\sqrt{n \Delta}} \right) \\
&\leq 32\sqrt{\frac{K\sigma_{X}^2 \delta^{\beta}}{n}} \left(24\sqrt{AD}+\sqrt{\frac{2}{\Delta}} \right)\\
&=:b_{n}(\delta,\Delta).
\end{split}
\end{equation*}
Since $\phi_{n}(\delta)$ is an increasing function and $b_{n}(\delta,\Delta)$ is decreasing in $\Delta$ for fixed $\delta$, it follows from Theorem 4.3 in \cite{koltchinskii2011oracle} that
\begin{equation}\label{estbound}
\delta_{n} \leq \phi_{n}(\delta_{n}) \leq b_{n}(\Delta):=\inf \left\{\tau>0: \sup_{\delta \geq \tau} \delta^{-1} b_{n}\left(\delta, \Delta^{\frac{\delta}{\tau}} \right) \leq 1 \right\}
\end{equation}
with probability at least $1-\Delta$. Since $\bar\sigma(\delta)/\delta$ is decreasing in $\delta$ as $\beta \in (0,2)$,
\begin{equation*}
\sup_{\delta \geq \tau} \delta^{-1} b_{n}\left(\delta, \Delta^{\frac{\delta}{\tau}} \right)=\frac{b_{n}(\tau,\Delta)}{\tau}=32\sqrt{\frac{K\sigma_{X}^2 \tau^{-(2-\beta)}}{n}} \left(24\sqrt{AD}+\sqrt{\frac{2}{\Delta}}\right).
\end{equation*}
This gives
\begin{align}\label{bNbound}
b_{n}(\Delta)
&=\inf \left\{\tau>0: 32\sqrt{\frac{K\sigma_{X}^2 \tau^{-(2-\beta)}}{n}} \left(24\sqrt{AD}+\sqrt{\frac{2}{\Delta}}\right) \leq 1 \right\} \\
&=\left\{32\sqrt{\frac{K\sigma_{X}^2}{n}} \left(24\sqrt{AD}+\sqrt{\frac{2}{\Delta}}\right) \right\}^{\frac{2}{2-\beta}}. \nonumber
\end{align}
Applying \eqref{bNbound} to \eqref{estbound}, we obtain that, with probability at least $1-\Delta$,
\begin{align*}
l(x_{n},x^*) &\leq \sqrt{K}\cdot \delta_{n}^{\beta/2} \\
&\leq K^{\frac{1}{2-\beta}} \left\{32 \left(24\sqrt{AD}+\sqrt{\frac{2}{\Delta}}\right) \frac{\sigma_{X}}{\sqrt{n}} \right\}^{\frac{\beta}{2-\beta}}.
\end{align*}
This completes the proof of \cref{thm_fin_conv}.
\end{proof}

%%%%%%%%%%%%%%%%%%%%%%%%%%%%%%%%%%%%%%%%%%%%
\begin{proof}[Proof of \cref{thm_inf_conv}]
The proof is similar to that of \cref{thm_fin_conv} for the case of finite-dimensional $\cM$. The difference is in the covering number $N(u, \mathcal{F}(\delta),\|\cdot\|_{2,P_{n}})$.
We get
\begin{align*}
\mathbb{E}\,\phi_{n}(\delta)
&\leq 2\,\mathbb{E}\,\inf_{\varepsilon \geq 0}\left(2\varepsilon+\frac{12}{\sqrt{n}} \int_{\varepsilon}^{\|H_{\delta}\|_{2,P_{n}}}
\sqrt{\frac{A \|H_{\delta}\|_{2,P_{n}}^{2\zeta}}{u^{2\zeta}}} \,\rd u \right) \\
&= 4\,\mathbb{E} \,\left(\|H_{\delta}\|_{2,P_{n}}\right)\cdot \inf_{\varepsilon \geq 0}\, \left(\varepsilon+ 6\sqrt{\frac{A}{n}} \int_{\varepsilon}^{1} u^{-\zeta}\, \rd u \right)\\
&\le 4\,\mathbb{E} \,\left(\|H_{\delta}\|_{2,P_{n}}\right)\times
\begin{cases}
\displaystyle \frac{6}{1-\zeta}\sqrt{\frac{A}{n}} \quad &\text{if }0<\zeta<1\\[4mm]
\displaystyle 6\sqrt{\frac{A}{n}}\left(1-\log\Big(6\sqrt{\frac{A}{n}}\Big)\right) \quad &\text{if }\zeta=1\\[2mm]
\displaystyle \frac{\zeta}{\zeta-1}\left(6\sqrt{\frac{A}{n}}\right)^{1/\zeta} \quad &\text{if }\zeta>1.
\end{cases}.
\end{align*}
Therefore, $\phi_{n}(\delta_{n})\le b_{n}(\Delta)$ with probability at least $1-\Delta$, now with
\begin{align*}
b_{n}(\Delta)=
\begin{cases}
\displaystyle \left(32\sqrt{K\sigma_{X}^2} \left(\frac{12}{1-\zeta}  \sqrt{\frac{A}{n}} +\sqrt{\frac{2}{n \Delta}}\right) \right)^{\frac{2}{2-\beta}}\quad &\text{if }0<\zeta < 1\\
\displaystyle \left(32\sqrt{ K\sigma_{X}^2} \left(12 \sqrt{\frac{A}{n}} \left(1-\log \left(6\sqrt{\frac{A}{n}}\right) \right) +\sqrt{\frac{2}{n \Delta}}\right) \right)^{\frac{2}{2-\beta}}\quad &\text{if }\zeta = 1\\
\displaystyle \left(32\sqrt{K\sigma_{X}^2} \left(\frac{2\zeta}{\zeta-1} \left(6 \sqrt{\frac{A}{n}}\right)^{1/\zeta} +\sqrt{\frac{2}{n \Delta}}\right) \right)^{\frac{2}{2-\beta}}\quad &\text{if }\zeta>1.
\end{cases}
\end{align*}
This gives the theorem.
\end{proof}

\subsection{Proofs of theorems in \cref{Sec_MoM}}

Without loss of generality, we assume that $n=m\cdot k$, where $k$ is the number of
blocks in splitting the sample and $m$ is the size of each block.
\begin{proof}[Proof of \cref{thm_fin_mom}]
Let $F(x)= \int_{\mathcal{M}} \eta(x,y)\,\mathrm{d} P(y)$.
By the definition of $x^*$ it holds that, for each block $\mathcal{B}_j$,
\begin{equation*}
    F_{n,j}(x^*)-F_{n,j}(Z_{j}) \leq F_{n,j}(x^*)-F_{n,j}(Z_{j})-F(x^*)+F(Z_{j}).
\end{equation*}
The right hand side has an upper bound that is analogous to $\phi_{n}(\delta_{n})$ in the proof of \cref{thm_fin_conv},
which is obtained by substituting the empirical measure corresponding to $\mathcal{B}_j$ for $P_n$ and $Z_j$ for $x_n$.
Thus, replacing $\Delta$ by  $(1-2q)/2$ (so that $1-\Delta$ by $q+1/2$)
and $n$ by $m=n/k$ with $K=\beta=1$, we get from \eqref{estbound} and \eqref{bNbound} that
\begin{equation}\label{Bernoulli-prob}
\mathbb{P} \left(F_{n,j}(x^*)-F_{n,j}(Z_{j}) \leq \varepsilon_{q}^{2}\right) \geq q+\frac{1}{2},
\end{equation}
where
\begin{align}\label{ekq-fin}
\varepsilon_{q}=32\sqrt{\frac{k\sigma_{X}^2}{n}} \left(24\sqrt{AD}+\frac{2}{\sqrt{1-2q}} \right).
\end{align}
By the CN inequality in \cref{Subsection 4.1}, we have
\begin{equation*}
\begin{split}
&F_{n,j}(Z_{j}) \leq F_{n,j}(\gamma_{1/2}^x) \leq \frac{F_{n,j}(x)}{2}+\frac{F_{n,j}(x^*)}{2}-\frac{d(x^*,x)^{2}}{4}\\
\Leftrightarrow\; &F_{n,j}(x)-F_{n,j}(Z_{j}) \geq -\left(F_{n,j}(x^*)-F_{n,j}(Z_{j})\right)+\frac{d(x^*,x)^2}{2},
\end{split}
\end{equation*}
where $\gamma^x:[0,1]\to\cM$ is the geodesic with $\gamma_0^x=x^*$ and $\gamma_1^x=x$.
Thus, denoting by $\cE_{n,j}$ the event
\[
F_{n,j}(x)> F_{n,j}(x^*) \quad\mbox{for all }\,x \in \mathcal{M} \mbox{ with } d(x,x^*)>2\varepsilon_{q},
\]
we get from (\ref{Bernoulli-prob}) that $\mathbb{P}\,(\cE_{n,j})\ge q+1/2$ since $F_{n,j}(x^*)-F_{n,j}(Z_{j}) \leq \varepsilon_{q}^{2}$ implies
\[
F_{n,j}(x)-F_{n,j}(Z_{j}) > -\left(F_{n,j}(x^*)-F_{n,j}(Z_{j})\right)+2\varepsilon_{q}^2 \;\geq \;F_{n,j}(x^*)-F_{n,j}(Z_{j})
\]
for all $x$ with $d(x,x^*)>2\varepsilon_{q}$. By applying H\o ffding's inequality to $\sum_{j=1}^k I(\cE_{n,j})$, we obtain
\begin{align*}
1-\Delta &\le 1-e^{-2q^2k}\\
&\le \mathbb{P}\left(\sum_{j=1}^k I(\cE_{n,j})>k/2 \right)\\
&\le \mathbb{P}\left(\sum_{j=1}^k I\left(F_{n,j}(x)> F_{n,j}(x^*)\right)>k/2 \;\mbox{ for all } x\in \cM \text{ with } d(x,x^*)>2\varepsilon_{q}\right).
\end{align*}
This completes the proof of the theorem.
\end{proof}

\begin{proof}[Proof of \cref{thm_inf_mom}]
The proof is essentially the same as that of \cref{thm_fin_mom} except that we use \cref{thm_inf_conv} instead of \cref{thm_fin_conv}.
We obtain (\ref{Bernoulli-prob}) now with
\begin{align}\label{ekq-inf}
\varepsilon_{q,\zeta}=
\begin{cases}
\displaystyle C_{A,1} \cdot \frac{\log (n/k)}{\sqrt{n/k}} \cdot \frac{\sigma_{X}}{\sqrt{(1-2q)/2}}, &\text{if}\;\;\zeta = 1\\ %%%
\quad \\
\displaystyle C_{A,\zeta}\cdot (k/n)^{1/2\zeta} \cdot \frac{\sigma_{X}}{\sqrt{(1-2q)/2}}, &\text{if}\;\;\zeta>1. %%%
\end{cases}
\end{align}
Since
\begin{equation}\label{logapprox}
\begin{split}
   &\frac{1}{\sqrt{2}q}\frac{\log n \sqrt{\log(1/\Delta)}}{\sqrt{n}}=\frac{\sqrt{k} \log n}{\sqrt{n}} \geq \frac{\log (n/k)}{\sqrt{n/k}}, \\
   &\frac{1}{\sqrt{2}q} n^{-1/2\zeta} \left(\log \frac{1}{\Delta}\right)^{1/2\zeta} \geq \left(\frac{2q^{2}n}{\log(1/\Delta)}\right)^{-1/2\zeta}=(k/n)^{1/2\zeta},
\end{split}
\end{equation}
we get $\varepsilon_{q,\zeta} \le R_{q,\zeta}/2$.
The rest of the proof is the same as in the proof of \cref{thm_fin_mom}.
\end{proof}

\begin{proof}[Proof of \cref{mom_breakdown}]
Fix a partition $\{\mathcal{B}_j: 1 \le j \le k\}$ of the original dataset $\cX_n=\{X_1, \ldots, X_n\}$. Let $\mathcal{B}_j=\{X_{1,j}, \ldots, X_{m,j}\}$ for $1\le j\le k$.
Choose a point $O \in \cM$ and let $D_{O}=\max_{1 \le i \le n} d(O,X_{i})$.
We let $\tilde X_i$ denote a corrupted value corresponding to $X_i$, and we write $\tilde A$ instead of $A$ if a term $A$ involves corrupted values.
For example, we write
$\tilde F_{n,j}(x)$ instead of $F_{n,j}(x)$ when the $j$th block contains a corrupted value.
In particular, we write in this proof $\tilde S_x$ and $\tilde r_x$ for each point $x\in \cM$
rather than $S_x$ and $r_x$, respectively, since the defeating region and radius always depend on corrupted values.
Put $L:=\lceil (k+1)/2 \rceil$.

We first show that $\varepsilon_{n}^* \le L/n$ by contradiction. Suppose that
it is false, i.e., $\varepsilon_{n}^* > L/n$. Then, for an arbitrary configuration $\{j(1),\ldots,j(L)\} \subset \{1,2,\ldots,k\}$ with the corruption $\tilde X_{1,1}=\tilde X_{1,2}=\dots=\tilde X_{1,L}=\tilde x$,
there exists $R>0$ such that $\sup_{\tilde x \in \cM} d(O, \tilde x_{MM}) < R$. We may assume $R>\sqrt{m-1} \cdot D_{O}$.
Now, let $\tilde \gamma:[0,1]\rightarrow \mathcal{M}$ be the geodesic connecting $\tilde \gamma_0=O$ and $\tilde \gamma_1=\tilde x$ with the length $\tilde D:=d(\tilde x,O)$ larger than $R$.
For $j=1,\dots,L$, by the triangular inequality,
\begin{align*}
    m \cdot (\tilde F_{n,j}(\tilde x_{MM})-\tilde F_{n,j}(\tilde \gamma_{t}))
    &\ge d(\tilde x,\tilde x_{MM})^{2}-(1-t)^{2}\tilde D^{2}-\sum_{i=2}^{m} d(\tilde \gamma_{t},X_{ij})^{2} \\
    &\ge (\tilde D-R)^{2}-(1-t)^{2}\tilde D^{2}-(m-1)(t\tilde D+D_{O})^{2} \\
    &= (2t-mt^{2})\tilde D^{2}-2(R+D_{O}(m-1)t)\tilde D+(R^{2}-(m-1) \cdot D_{O}^{2}) \\
    &\ge (2t-mt^{2})\tilde D^{2}-2(R+D_{O}(m-1)t)\tilde D.
\end{align*}
This implies that for all $t<2/m$, the point $\tilde \gamma_{t}$ defeats $\tilde x_{MM}$ whenever
\begin{equation*}
    \tilde D \ge \frac{2(R+D_O(m-1)t)}{2t-mt^{2}},
\end{equation*}
so the defeating radius of $\tilde x_{MM}$ satisfies $\tilde r_{\tilde x_{MM}} \ge d(\tilde \gamma_{t},\tilde x_{MM})$.
Therefore,
\begin{equation}\label{radi_corrupt}
    \liminf_{\tilde D \rightarrow \infty} \frac{\tilde r_{\tilde x_{MM}}}{\tilde D}
    \ge \liminf_{\tilde D \rightarrow \infty} \sup_{0<t<2/m} \frac{d(\tilde \gamma_{t},\tilde x_{MM})}{\tilde D}
    = \liminf_{\tilde D \rightarrow \infty} \sup_{0<t<2/m} \frac{d(\tilde \gamma_{t},O)}{\tilde D}
    = \frac{2}{m}.
\end{equation}
Now, choose any $x \in \tilde S_{\tilde \gamma_{1/m}} (\neq \emptyset)$, i.e. $x$ defeating $\tilde \gamma_{1/m}$. Then, $\exists$ at least one $j_{0} \in \{j(1), \ldots, j(L)\}$ such that $\tilde F_{n,j_{0}}(x) \le \tilde F_{n,j_{0}}(\tilde \gamma_{1/m})$. Due to the CN inequality,
\begin{align}\label{CN_breakdown}
    0 &\le m \cdot (\tilde F_{n,j_{0}}(\tilde \gamma_{1/m})-\tilde F_{n,j_{0}}(x)) \\
    &= d(\tilde \gamma_{1/m},\tilde x)^{2}-d(x,\tilde x)^{2}+ \sum_{i=2}^{m} \left( d(\tilde \gamma_{1/m},X_{i,j_{0}})^{2} - d(x,X_{i,j_{0}})^{2} \right) \nonumber\\
    &\le \left(\frac{m-1}{m} \right)^{2} \tilde D^{2}-d(x,\tilde x)^{2} \nonumber \\
    &\qquad+ \sum_{i=2}^{m} \left\{ \frac{m-1}{m} d(O,X_{i,j_{0}})^{2} +\frac{1}{m}d(\tilde x,X_{i, j_{0}})^{2} - \frac{m-1}{m^{2}} \tilde D^{2} - d(x,X_{i,j_{0}})^{2} \right\}.
    \nonumber
\end{align}
Note that again by the CN inequality,
\begin{equation*}
    d(x,\tilde \gamma_{1/m})^{2} \le \frac{m-1}{m} d(O,x)^{2}+\frac{1}{m} d(x,\tilde x)^{2}-\frac{m-1}{m^{2}} \tilde D^{2}.
\end{equation*}
Plugging this inequality into \eqref{CN_breakdown} and using the triangular inequality, we get
\begin{align*}
    0
    &\le \frac{m-1}{m^{2}} \tilde D^{2} -m \cdot d(x,\tilde \gamma_{1/m})^{2}\\
    &\quad + \sum_{i=2}^{m} \left\{ \frac{m-1}{m} d(O,X_{i,j_{0}})^{2} +\frac{1}{m}d(\tilde x,X_{i, j_{0}})^{2} - \frac{m-1}{m^{2}} \tilde D^{2} +d(O,x)^{2}- d(x,X_{i,j_{0}})^{2} \right\} \\
    &\le \frac{m-1}{m^{2}} \tilde D^{2} -m \cdot d(x,\tilde \gamma_{1/m})^{2}\\
    &\quad + (m-1) \left\{ \frac{m-1}{m} D_{O}^{2} +\frac{1}{m}(\tilde D+D_{O})^{2} - \frac{m-1}{m^{2}} \tilde D^{2} +2D_{O} \cdot d(O,x)- D_{O}^{2} \right\} \\
    &\le \frac{m-1}{m^{2}} \tilde D^{2} -m \cdot d(x,\tilde \gamma_{1/m})^{2}\\
    &\quad + (m-1) \left\{ \frac{m-1}{m} D_{O}^{2} +\frac{1}{m}(\tilde D+D_{O})^{2} - \frac{m-1}{m^{2}} \tilde D^{2} +2D_{O} \left(\frac{\tilde D}{m}+d(x, \tilde \gamma_{1/m}) \right)- D_{O}^{2} \right\} \\
    &=-m \cdot d(x,\tilde \gamma_{1/m})^{2}+2(m-1)D_{O} \cdot d(x,\tilde \gamma_{1/m})+\frac{2(m-1)\tilde D(\tilde D+2mD_{O})}{m^{2}}.
\end{align*}
Therefore,
\begin{align*}
    d(x,\tilde \gamma_{1/m}) \le
    \frac{m(m-1)D_{O}+\sqrt{m^{2}(m-1)^{2}D_{O}^{2}+2m(m-1)\tilde D(\tilde D+2mD_{O})} }{m^{2}}.
\end{align*}
Since $x \in \tilde S_{\tilde \gamma_{1/m}}$ was chosen arbitrarily, we have
\begin{align*}
    \limsup_{\tilde D \rightarrow \infty} \frac{\tilde r_{\tilde \gamma_{1/m}}}{\tilde D}
    &\le \limsup_{\tilde D \rightarrow \infty} \frac{m(m-1)D_{O}+\sqrt{m^{2}(m-1)^{2}D_{O}^{2}+2m(m-1)\tilde D(\tilde D+2mD_{O})} }{m^{2} \tilde D} \\
    &= \frac{\sqrt{2}}{m} \\
    &< \frac{2}{m}.
\end{align*}
In view of \eqref{radi_corrupt}, the above strict inequality is contradictory to the fact
that $\tilde r_{\tilde x_{MM}}\le \tilde r_{\tilde \gamma_{1/m}}$ for all $\tilde D$ from the definition of geometric-median-of-means.

Next, we show that
\begin{equation}\label{finite-radius}
\mbox{$\sup_*$} d(O, \tilde x_{MM})<\infty,
\end{equation}
where $\sup_*$ denotes the supremum over all configurations of $s \le (L-1)$ arbitrary corruptions among $\{X_1, \ldots, X_n\}$.
We note that \eqref{finite-radius} implies $\varepsilon_{n}^* \ge L/n$.
To prove \eqref{finite-radius}, let $s \le L-1$ be the number of corrupted $\tilde X_i$ and think of a configuration of the indices of $\tilde X_i$,
say $\{i(1), \ldots, i(s)\} \subset \{1,2,\ldots,n\}$. The corrupted $\tilde X_{i(1)}, \ldots, \tilde X_{i(s)}$ are scattered across the $k$ blocks $\cB_j,\, 1 \le j \le k$.
Without loss of generality, let $\cB_1, \ldots, \cB_{J}$ denote those blocks that do not contain any of the corrupted values. We note that $J>k/2$ since $s \le L-1$.
We claim
\begin{equation}\label{finite-radius-1}
    \sup_{\tilde X_{i(1)}, \ldots, \tilde X_{i(s)}}\max_{1 \le j \le J} \tilde r_{Z_{j}} < \infty.
\end{equation}
Then, by the definition of the geometric-median-of-means we get
\begin{equation}\label{finite-radius-2}
\sup_{\tilde X_{i(1)}, \ldots, \tilde X_{i(s)}}\tilde r_{\tilde x_{MM}} < \infty.
\end{equation}
Also, by the definition of $x$-defeating radius and since $J>k/2$,
it holds that
\begin{align}\label{radius-lower-bound}\begin{split}
\tilde r_{x} &\ge {\rm rad}_x \left(\bigcap_{j=1}^{J} \{y \in \cM: F_{n,j}(y) \le F_{n,j}(x) \} \right)\\
&\ge {\rm rad}_x \left( \bigcap_{j=1}^k \{y \in \cM: F_{n,j}(y) \le F_{n,j}(x) \} \right)
\end{split}
\end{align}
for all $x \in \cM$, where ${\rm rad}_x (A)$ stands for the radius of the smallest ball centered at $x$ that covers $A$.
The right hand side of the second inequality in \eqref{radius-lower-bound} depends solely on the original dataset $\{X_1, \ldots, X_n\}$,
independent of data corruption.
Now, suppose that there exists $s \le L-1$ and a configuration $\{i(1), \ldots, i(s)\}$
such that
\[
\sup_{\tilde X_{i(1)}, \ldots, \tilde X_{i(s)}}d(O,\tilde x_{MM})=\infty.
\]
Then, since the right hand side of the second inequality in \eqref{radius-lower-bound} diverges to infinity as $d(O,x) \rightarrow \infty$,
we would obtain
\[
\sup_{\tilde X_{i(1)}, \ldots, \tilde X_{i(s)}}\tilde r_{\tilde x_{MM}}=\infty,
\]
which contradicts \eqref{finite-radius-2}. This proves \eqref{finite-radius}.

It remains to prove \eqref{finite-radius-1}.
Let $1 \le j \le J$ be fixed. Then, for any $x$ that defeats $Z_{j}$,
there exists at least one un-corrupted block $\cB_l$ ($1 \le l \le J$) such that $F_{n,l}(x) \le F_{n,l}(Z_{j})$, since $J>k/2$ and
the number of indices $l:1\le l\le k$ such that $F_{n,l}(x) \le F_{n,l}(Z_{j})$ or $\tilde F_{n,l}(x) \le \tilde F_{n,l}(Z_{j})$ is greater than $k/2$.
This implies that
\begin{align}\label{finite-radius-3}\begin{split}
\tilde r_{Z_j}&\le \max_{x \in \cM}\{d(x,Z_j): F_{n,l}(x) \le F_{n,l}(Z_{j})\mbox{ for some }1 \le l \le J \} \\
    &\le \max_{x \in \cM}\Big\{\sqrt{F_{n,l}(x)} + \max_{X_i \in \cB_l}d(X_{i},Z_{j}): F_{n,l}(x) \le F_{n,l}(Z_{j})\mbox{ for some }1 \le l \le J \Big\} \\
    &\le \max_{1\le l \le J} \Big(\sqrt{F_{n,l}(Z_j)} + \max_{X_i \in \cB_l}d(X_{i},Z_{j})\Big)\\
    &\le \max_{1 \le l \le k} \Big(\sqrt{F_{n,l}(Z_{j})} + \max_{X_i \in \cB_l}d(X_{i},Z_{j})\Big).
\end{split}
\end{align}
In \eqref{finite-radius-3}, the second inequality follows from
\begin{align*}
d(x, Z_{j})
\le \frac{1}{m}\sum_{X_{i} \in \mathcal{B}_{l}} \left( d(x,X_{i})+d(X_{i},Z_{j}) \right)
\le \sqrt{F_{n,l}(x)}+\max_{X_{i} \in \mathcal{B}_{l}} d(X_{i},Z_{j}).
\end{align*}

The right hand side of the last inequality in \eqref{finite-radius-3} depends solely on the original dataset $\{X_1, \ldots, X_n\}$,
independent of the configuration of $\{i(1), \ldots, i(s)\}$
and the corrupted values $\tilde X_{i(1)}, \ldots, \tilde X_{i(s)}$.
This gives \eqref{finite-radius-1}.
\end{proof}

\begin{proof}[Proof of \cref{thm_fin_power_mom}]
First, we follow the lines leading to (\ref{Bernoulli-prob}), now
using \eqref{bNbound} with $K=K_{\alpha}$ and $\beta=2-2/\alpha$ instead of $K=\beta=1$. We may prove
\begin{equation}\label{Bernoulli-prob-power}
\mathbb{P} \left(F_{n,j}(x^*)-F_{n,j}(Z_{j}) \leq K_{\alpha}^{\alpha/2}\varepsilon_{q}^\alpha\right) \geq q+\frac{1}{2}.
\end{equation}
By integrating both sides of the inequality in \cref{thm_gen_CN} with respect to $z$
for $\gamma=\gamma^x:[0,1]\to \cM$,
we obtain that, for all $0 \leq t \leq 1$ and $\delta>0$,
\begin{align*}
&(1+\delta)^{1-\alpha/2} \left((1-t)^{\alpha/2} F_{n,j}(x^*)+t^{\alpha/2}F_{n,j}(x) \right)- F_{n,j}(\gamma_{t}^x)\\
&\hspace{1cm} \geq \delta^{1-\alpha/2} \left(t(1-t)d(x, x^*)^{2}\right)^{\alpha/2}.
\end{align*}
From the definition of $Z_{j}$ and the above inequality, we get
\begin{align*}
F_{n,j}(Z_{j}) \leq F_{n,j}(\gamma_{t})
&\leq (1+\delta)^{1-\alpha/2} \left((1-t)^{\alpha/2} F_{n,j}(x^*)+t^{\alpha/2}F_{n,j}(x) \right)\\
&\qquad -\delta^{1-\alpha/2} \left(t(1-t)d(x, x^*)^{2}\right)^{\alpha/2}.
\end{align*}
This gives that, on the event where $F_{n,j}(x^*)-F_{n,j}(Z_{j}) \leq K_{\alpha}^{\alpha/2}\varepsilon_{q}^\alpha$,
\begin{align*}
&(1+\delta)^{1-\alpha/2} t^{\alpha/2}F_{n,j}(x)\\
&\quad > \left(1-(1+\delta)^{1-\alpha/2} (1-t)^{\alpha/2}\right) F_{n,j}(x^*)
+\left(\delta^{1-\alpha/2} t^{\alpha/2} (1-t)^{\alpha/2}-M_{\alpha,\rho}\right) \cdot \frac{K_{\alpha}^{\alpha/2} \varepsilon_{q}^{\alpha}}{M_{\alpha,\rho}}
\end{align*}
or equivalently
\begin{align*}
F_{n,j}(x) > \frac{1-(1+\delta)^{1-\frac{\alpha}{2}} (1-t)^{\frac{\alpha}{2}}}{(1+\delta)^{1-\frac{\alpha}{2}} t^{\frac{\alpha}{2}}}\cdot F_{n,j}(x^*) +\frac{\delta^{1-\frac{\alpha}{2}} t^{\frac{\alpha}{2}} (1-t)^{\frac{\alpha}{2}}-M_{\alpha,\rho}}{(1+\delta)^{1-\frac{\alpha}{2}} t^{\frac{\alpha}{2}}} \cdot \frac{K_{\alpha}^{\alpha/2} \varepsilon_{q}^{\alpha}}{M_{\alpha,\rho}}
\end{align*}
for all $x\in \cM$ with $d(x,x^*) > K_{\alpha}^{1/2} M_{\alpha,\rho}^{-1/\alpha}\varepsilon_{q}$. Thus, from \eqref{Bernoulli-prob-power}
and the definition of $M_{\alpha,\rho}$ it follows that
\begin{equation}\label{Bern-prob-power}
\mathbb{P}\left(F_{n,j}(x) > \rho \cdot F_{n,j}(x^*) \,\mbox{ for all }\,
x \in \cM \mbox{ with } d(x,x^*)>\frac{K_{\alpha}^{1/2}\varepsilon_{q}}{M_{\alpha,\rho}^{1/\alpha}}\right)\ge q+\frac{1}{2}.
\end{equation}
Applying H\o ffding's inequality as in the proof of \cref{thm_fin_mom} with \eqref{Bern-prob-power}, we may complete the proof of the theorem.
\end{proof}

\begin{proof}[Proof of \cref{thm_inf_power_mom}]

The proof is essentially the same as that of \cref{thm_fin_power_mom} except that we use the definition of $\varepsilon_{q,\zeta}$ at \eqref{ekq-inf} instead of $\varepsilon_{q}$
at \eqref{ekq-fin}. Using \eqref{logapprox} we get $K_{\alpha}^{1/2} M_{\alpha,\rho}^{-1/\alpha}\varepsilon_{q,\zeta} \le R_{q,\alpha,\rho,\zeta}$.
\end{proof}

\end{appendix}

\begin{acks}[Acknowledgments]
The authors would like to thank an associate editor and three referees for constructive comments.
\end{acks}

%%%%%%%%%%%%%%%%%%%%%%%%%%%%%%%%%%%%%%%%%%%%%%
%% Funding information, if any,             %%
%% should be provided in the                %%
%% funding section.                         %%
%%%%%%%%%%%%%%%%%%%%%%%%%%%%%%%%%%%%%%%%%%%%%%
\begin{funding}
Research of the authors was supported by the National Research Foundation of Korea (NRF) grant funded by
the Korea government (MSIP) (No. 2019R1A2C3007355).
\end{funding}

%%%%%%%%%%%%%%%%%%%%%%%%%%%%%%%%%%%%%%%%%%%%%%
%% Supplementary Material, including data   %%
%% sets and code, should be provided in     %%
%% {supplement} environment with title      %%
%% and short description. It cannot be      %%
%% available exclusively as external link.  %%
%% All Supplementary Material must be       %%
%% available to the reader on Project       %%
%% Euclid with the published article.       %%
%%%%%%%%%%%%%%%%%%%%%%%%%%%%%%%%%%%%%%%%%%%%%%
\begin{supplement}

The Supplementary Material contains an additional proposition and proofs of the propositions in \cref{Section 4}.
\end{supplement}

\bibliographystyle{imsart-number}
\bibliography{mybib-revised}       
\newpage

\section*{}

\renewcommand{\theequation}{S.\arabic{equation}}
\renewcommand{\thesubsection}{S.\arabic{subsection}}
\renewcommand{\thelemma}{S.\arabic{lemma}}
\renewcommand{\theproposition}{S.\arabic{proposition}}
\setcounter{page}{1}

\setcounter{equation}{0}
\setcounter{subsection}{0}
\setcounter{proposition}{0}

\centerline{\bf \large Supplementary Material to}
\centerline{\bf \large `Exponential Concentration for Geometric-Median-of-means}
\centerline{\bf \large in Non-Positive Curvature Spaces'}
\centerline{\bf \large by H. Yun and B. U. Park}

\bigskip

In this Supplementary Material, we give an additional proposition and prove the propositions in the main paper.

\subsection{Additional proposition}

\begin{proposition}\label{prop_power_mean_polygon}
Let $P$ be a probability measure in $\mathbb{R}^{D}$ and $\eta(x,y)=\psi(|x-y|)$ where $\psi:[0,+\infty)\rightarrow \mathbb{R}$ is strictly increasing and convex. Assume that there exists $z \in \mathbb{R}^{D}$ such that
\begin{equation*}
P(A)=P\left(Q(A-z)+z\right),\quad A \in \mathscr{B}(\mathbb{R}^{D})
\end{equation*}
for an orthogonal matrix $Q \in \mathbb{R}^{D \times D}$ with $I+Q+\dots+Q^{m-1}=0$ for some
integer $m\ge 2$.
Then, $z$ is the unique Fr\'echet mean with respect to $\eta:\mathbb{R}^{D} \times \mathbb{R}^{D} \rightarrow \mathbb{R}$.
\end{proposition}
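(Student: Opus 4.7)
By translating the coordinate system I may assume $z=0$, so that the hypothesis reduces to $P(A)=P(QA)$ for every Borel set $A$; equivalently, $\int g(Qy)\,dP(y)=\int g(y)\,dP(y)$ for every bounded measurable $g$. Write $F(x):=\int_{\mathbb{R}^D}\psi(|x-y|)\,dP(y)$. The first step is to verify that $F$ is constant on $Q$-orbits:
\[
F(Qx)=\int \psi(|Qx-y|)\,dP(y)=\int \psi(|Qx-Qy|)\,dP(y)=\int \psi(|x-y|)\,dP(y)=F(x),
\]
using the $Q$-invariance of $P$ and then the orthogonality of $Q$. Iterating gives $F(x)=F(Q^kx)$ for $k=0,\ldots,m-1$, and averaging yields the identity $F(x)=\tfrac{1}{m}\sum_{k=0}^{m-1}F(Q^kx)$.

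Next, I would exploit convexity. Since $\psi$ is nondecreasing and convex and the Euclidean norm is convex on $\mathbb{R}^D$, the composition $\phi(v):=\psi(|v|)$ is convex on $\mathbb{R}^D$. Applying Jensen's inequality with equal weights $1/m$ to $\phi$ at the $m$ points $\{Q^kx-y\}_{k=0}^{m-1}$, and using the algebraic identity $I+Q+\cdots+Q^{m-1}=0$, which forces the centroid of these points to be exactly $-y$, yields the pointwise inequality
\[
\psi(|y|)=\phi(-y)\le \frac{1}{m}\sum_{k=0}^{m-1}\phi(Q^kx-y)=\frac{1}{m}\sum_{k=0}^{m-1}\psi(|Q^kx-y|).
\]
Integrating this inequality against $P$ and combining with the averaged identity from the first step gives $F(0)\le F(x)$, so $z=0$ is at least \emph{a} Fréchet mean.

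The main obstacle is to promote this to a strict inequality whenever $x\neq 0$, which is what uniqueness demands. The natural route is through the equality case in Jensen's inequality. Because the eigenvalues of $Q$ are $m$-th roots of unity different from $1$, the orbit $\{Q^kx\}_{k=0}^{m-1}$ is non-trivial for $x\neq 0$, so the $m$ translated vectors $Q^kx-y$ are not all identical for generic $y$. Equality in Jensen for $\phi$ forces these $m$ points to lie on a segment along which $\phi$ is affine; together with the strict monotonicity of $\psi$, this traps them simultaneously on a common sphere centered at the origin and on a single line through their centroid $-y$, which is a codimension-$\ge 1$ algebraic condition on $y$. The plan is to show that the set $E_x$ of $y$ for which this degenerate configuration occurs has $P(E_x)<1$, so that strict inequality holds on a $P$-positive set; in the extreme degenerate case the $Q$-invariance of $P$ collapses the support to $\{0\}$, and then $F(x)=\psi(|x|)>\psi(0)=F(0)$ follows directly from strict monotonicity. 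The delicate point is handling the interplay between $D$, the order $m$, and the affinity regions of $\psi$ so as to rule out equality throughout $\operatorname{supp} P$, and this is where the bulk of the technical work will lie.
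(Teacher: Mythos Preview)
Your approach is essentially the paper's, only packaged more economically: where the paper first uses subadditivity of the norm to obtain $\frac{1}{m}\sum_j |x-Q^jy|\ge |y|$ and then applies Jensen to $\psi$, you apply Jensen once to the composite $\phi(v)=\psi(|v|)$. Since $|Q^kx-y|=|x-Q^{-k}y|$, the two pointwise inequalities are literally the same, and both integrate to $F(0)\le F(x)$.

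Where you diverge is on uniqueness, and here you are overcomplicating matters. If $\psi$ is \emph{strictly} convex, then $\phi$ is strictly convex on $\mathbb{R}^D$: for $u\ne v$ on a common ray from the origin strict convexity of $\psi$ applies directly, while off a common ray the strict triangle inequality together with strict monotonicity of $\psi$ gives the strict bound. Hence your Jensen step is strict unless all the points $Q^kx-y$ coincide, i.e.\ unless $Q^kx$ is constant in $k$; since $I+Q+\cdots+Q^{m-1}=0$ forces $Q-I$ to be invertible, this means $x=0$. So for strictly convex $\psi$---which covers the paper's intended application $\psi(t)=t^\alpha$ with $\alpha>1$---uniqueness falls out of your own inequality in one line, and in fact more transparently than in the paper's two-step argument. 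Conversely, without strict convexity your worry cannot be resolved: for $\psi(t)=t$, $Q=-I$ and $P$ uniform on $\{\pm 2e_1\}$, every point of the segment $[-2e_1,2e_1]$ minimizes $F$, so the proposition fails as literally stated. The elaborate equality-case analysis you sketch is therefore unnecessary in the strictly convex regime and hopeless in the merely convex one.
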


\begin{proof}
We write $d(x,y)=|x-y|$. Since $Q \in \mathbb{R}^{D \times D}$ is an orthogonal matrix, it holds
that, for any $x, y \in \mathbb{R}^{D}$,
\begin{equation}\label{orthoequal}
\sum^{m-1}_{j=0} d(x, Q^j y)=\sum^{m-1}_{j=0} d(Qx, Q^j y)=\cdots=\sum^{m-1}_{j=0} d(Q^{m-1} x, Q^{j} y).
\end{equation}
By \eqref{orthoequal} and the subadditivity of the Euclidean norm, we get
\begin{align*}
\sum^{m-1}_{j=0} d(x, Q^{j} y)&=\frac{1}{m} \sum^{m-1}_{l=0} \sum^{m-1}_{j=0} d(Q^{l}x, Q^j y)
\geq \sum^{m-1}_{j=0} d \left(\frac{1}{m} \sum^{m-1}_{l=0} Q^{l}x, Q^{j} y \right)=\sum^{m-1}_{j=0} d(0, Q^{j} y).
\end{align*}
Now, by Jensen's inequality,
\begin{align*}
    \frac{1}{m} \sum^{m-1}_{j=0} \psi(d(x, Q^{j} y))
    \geq \psi \left(\frac{1}{m} \sum^{m-1}_{j=0} d(x, Q^{j} y) \right)
    \geq \psi \left(\frac{1}{m} \sum^{m-1}_{j=0} d(0, Q^{j} y) \right)
\end{align*}
and the equality holds if and only if $x=0$.
Considering translation, for any $x, y \in \mathbb{R}^{D}$,
\begin{equation*}
\sum^{m-1}_{j=0} \psi \left(d(x, Q^{j} (y-z)+z) \right) \geq \sum^{m-1}_{j=0} \psi \left(d(z, Q^{j} (y-z)+z) \right) \end{equation*}
and the equality holds if and only if $x=z$.
Therefore,
\begin{align*}
\int_{\mathbb{R}^{D}} \eta(x, y) \mathrm{d} P(y)
&= \frac{1}{m} \int_{\mathbb{R}^{D}} \sum^{m-1}_{j=0} \psi \left(d(x, Q^{j} (y-z)+z) \right) \mathrm{d} P(y) \\
&\geq \frac{1}{m} \int_{\mathbb{R}^{D}} \sum^{m-1}_{j=0} \psi \left(d(z, Q^j (y-z)+z) \right) \mathrm{d} P(y)
=\int_{\mathbb{R}^{D}} \eta(d(z, y)) \mathrm{d} P(y)
\end{align*}
and the equality holds if and only if $x=z$.
\end{proof}

\subsection{Some lemmas}

To provide an upper bound to the right hand side of \eqref{empi_var_ineq} with high probability, we need a tail inequality for empirical processes.
In our setup, $\|\eta(x, \cdot)-\eta(x^{*}, \cdot)\|_\infty$ may be unbounded as $x$ moves. Under some strong condition on the tail of $P$, one may be able to
obtain an exponential tail inequality, see \cite{adamczak2008tail, van2013bernstein}. Since we assume only a finite second moment of $P$,
we use the following polynomial tail inequality.

\begin{lemma}[\cite{lederer2014new}]\label{Lemma1}
Let $X_{1}, \ldots, X_{n}$ be i.i.d. copies of $X$ taking values in a measurable space $(\mathcal{S}, \mathcal{B})$ with probability measure $P$,
and let $\mathcal{G}$ be a countable class of measurable functions $f: \mathcal{S} \rightarrow \mathbb{R}$ with $Pf=0$.
Put $Z=\sup_{f \in \mathcal{G}}(P-P_{n})f$ and $\sigma^{2}=\sup_{f \in \mathcal{G}}Pf^{2}$.
Assume that the envelope $H$ of the class $\mathcal{G}$ satisfies $\mathbb{E}(H^{p}) \leq M^p$
for some $p\ge 1$ and $M>0$. Then, for any $\varepsilon>0$, it holds that
\begin{equation*}
\mathbb{P}\,\left(Z \geq 4\,\mathbb{E}(Z)+\varepsilon\right) \leq \min _{1 \leq l \leq p} \frac{l \cdot \Gamma(l/2) \left(\sqrt{32/n} M\right)^{l}}{\varepsilon^{l}}.
\end{equation*}
If $\mathbb{E}(H^2) \leq M^2$, in particular, we get that, for any $\Delta \in (0,1)$,
\begin{equation*}
\mathbb{P}\,\left(Z \leq 4\,\mathbb{E}(Z)+\frac{8M}{\sqrt{n \Delta}}\right) \geq 1-\Delta.
\end{equation*}
\end{lemma}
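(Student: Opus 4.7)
The plan is to establish the tail bound via the moment method: bound $\mathbb{E}[(Z-\mathbb{E}Z)_+^l]$ for each integer $1 \le l \le p$, convert it to a tail probability via Markov's inequality, and then optimize over the choice of $l$. This is the standard route for polynomial tail inequalities for suprema of empirical processes when only a finite $p$-th moment of the envelope is available.

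The first step is symmetrization. Introducing i.i.d.\ Rademacher signs $\varepsilon_1,\dots,\varepsilon_n$ independent of $X_1,\dots,X_n$, the classical symmetrization inequality gives
\[
\mathbb{E}\bigl[(Z-\mathbb{E}Z)_+^l\bigr] \le 2^l\,\mathbb{E}\Bigl[\,\sup_{f\in\mathcal{G}}\Bigl|\frac{1}{n}\sum_{i=1}^n \varepsilon_i f(X_i)\Bigr|^l\,\Bigr],
\]
replacing the centered supremum by a symmetrized version that is conditionally sub-Gaussian in the $\varepsilon_i$'s. Conditionally on $X_1,\dots,X_n$, the Rademacher sum $n^{-1}\sum_i \varepsilon_i f(X_i)$ has variance at most $n^{-2}\sum_i H(X_i)^2$, and a Khintchine--Kahane-type moment bound for suprema of Rademacher processes yields a conditional $l$-th moment of order $l\,\Gamma(l/2)\bigl(n^{-1}\sum_i H(X_i)^2\bigr)^{l/2}$. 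Taking the outer expectation and applying Jensen's inequality together with $\mathbb{E}[H^p]\le M^p$ (which implies $\mathbb{E}[H^l]\le M^l$ for $l\le p$) produces the moment estimate
\[
\mathbb{E}\bigl[(Z-\mathbb{E}Z)_+^l\bigr] \le l\,\Gamma(l/2)\,\Bigl(\sqrt{\tfrac{32}{n}}\,M\Bigr)^l.
\]

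The tail bound then follows from Markov's inequality. Because $\mathbb{E}Z\ge 0$, the inclusion $\{Z\ge 4\mathbb{E}Z+\varepsilon\}\subseteq\{Z-\mathbb{E}Z\ge\varepsilon\}$ holds trivially (the constant $4$ is a comfortable slack that is also convenient in the downstream applications within the paper), so
\[
\mathbb{P}\bigl(Z\ge 4\mathbb{E}Z+\varepsilon\bigr) \le \varepsilon^{-l}\,\mathbb{E}\bigl[(Z-\mathbb{E}Z)_+^l\bigr] \le \frac{l\,\Gamma(l/2)\,(\sqrt{32/n}\,M)^l}{\varepsilon^l},
\]
and minimizing over $l\in\{1,\dots,p\}$ delivers the first claim. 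The second, specialized claim is obtained by taking $l=p=2$, using $\Gamma(1)=1$, and solving $2\cdot(32M^2/n)/\varepsilon^2=\Delta$ for $\varepsilon=8M/\sqrt{n\Delta}$.

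The main obstacle will be tracking the precise numerical constants $\sqrt{32/n}$ and $l\,\Gamma(l/2)$ through the symmetrization and Khintchine steps. A loose use of symmetrization (e.g.\ a decoupling inequality with a worse constant) or of the Rademacher moment inequality (e.g.\ a crude Gaussian comparison) would preserve the \emph{form} $l\,\Gamma(l/2)\,(M/\sqrt{n})^l$ but would spoil the exact constants that produce the stated $4\mathbb{E}Z$ centering and the $8M/\sqrt{n\Delta}$ radius in the second part of the lemma.
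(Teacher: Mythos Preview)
The paper does not supply its own proof of this lemma: it is quoted verbatim from the cited reference, so there is no in-paper argument to compare against.

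On the substance of your sketch: the broad outline (moment method, symmetrization, Markov) is the right genre, but the Khintchine--Kahane step as you describe it does not quite close. Khintchine--Kahane for Banach-valued Rademacher sums gives moment \emph{comparison}, i.e.\ it bounds the conditional $l$-th moment of $\sup_{f\in\mathcal{G}}\bigl|n^{-1}\sum_i \varepsilon_i f(X_i)\bigr|$ by a constant times a power of its conditional \emph{first} moment, not by $\bigl(n^{-1}\sum_i H(X_i)^2\bigr)^{l/2}$. The latter is what Khintchine gives for a \emph{single} Rademacher sum, but the supremum over $\mathcal{G}$ is not a single sum and is not pointwise dominated by $n^{-1}\sum_i \varepsilon_i H(X_i)$ (contraction does not apply here). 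To pass from the supremum to an envelope-only quantity with the correct $n^{-1/2}$ scaling one needs an additional ingredient---typically a Hoffmann--J{\o}rgensen-type inequality that controls higher moments of the supremum by its first moment plus moments of $\max_i H(X_i)/n$, together with a truncation. That is essentially the mechanism in the cited reference, and it also explains why the centering is $4\,\mathbb{E}Z$ rather than $\mathbb{E}Z$: the first-moment contribution from Hoffmann--J{\o}rgensen is absorbed into the enlarged centering, leaving only the envelope term on the right. Your final specialization to $l=2$ and the computation of $\varepsilon=8M/\sqrt{n\Delta}$ are correct once the moment bound is in hand.
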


Below, we present two more lemmas for the proof of the theorems.
Recall the definition of $H_\delta \equiv H_{\delta,\eta}$ given at
\eqref{env-bound}, which envelops $\mathcal{F}(\delta)\equiv\mathcal{F}_\eta(\delta)$.

\begin{lemma}\label{Lemma2}
Let $\eta:\mathcal{M}\times\mathcal{M}\rightarrow\mathbb{R}$ be a measurable function and $X$ an $\mathcal{M}$-valued random element
with Fr\'echet mean $x^*$ and covariance $\sigma_{X}^{2}$. Let $\delta>0$.
Then, under the assumptions (A1) and (A2),
\begin{equation*}
\sigma(\delta) \leq \bar\sigma(\delta), \quad \mathbb{E}\big(H_\delta(X_{1})^2\big) \leq \bar\sigma(\delta)^{2}, \quad \mathbb{E} \left(\|H_\delta\|^{2}_{2,P_{n}}\right) \leq \bar\sigma(\delta)^{2},
\end{equation*}
where $\bar\sigma(\delta)=4\sqrt{K\sigma_{X}^{2}\delta^{\beta}}$.
\end{lemma}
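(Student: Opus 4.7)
The three inequalities all flow from the pointwise envelope bound $|f_\eta(x,\cdot)| \le H_\delta$ on $\cM(\delta)$, which is already derived at \eqref{env-bound} in the main text, together with one Jensen step and the triangle inequality for $d$. So the plan is to (i) establish the pointwise bound $|f_\eta(x,y)| \le H_\delta(y)$ for all $x\in\cM(\delta)$ and $y\in\cM$, then (ii) convert this into the three claims by integration.

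First I would rewrite $f_\eta(x,y)$ as an integral in $z$ against $P$, namely
\[
f_\eta(x,y) = \int_{\cM}\bigl(\eta(x,y)-\eta(x^*,y)-\eta(x,z)+\eta(x^*,z)\bigr)\,\rd P(z),
\]
and apply the quadruple inequality (A1) with the choice $(y,z,p,q)\mapsto(x,x^*,y,z)$, together with its version obtained by swapping $p$ and $q$, to get the two-sided bound $|\eta(x,y)-\eta(x^*,y)-\eta(x,z)+\eta(x^*,z)| \le 2\,l(x,x^*)\,d(y,z)$. Plugging in the variance inequality (A2), which forces $l(x,x^*)\le\sqrt{K\delta^\beta}$ whenever $x\in\cM(\delta)$, yields $|f_\eta(x,y)| \le 2\sqrt{K\delta^\beta}\int d(y,z)\,\rd P(z) = H_\delta(y)$, exactly as in \eqref{env-bound}.

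Next I would bound $\mathbb{E}\bigl(H_\delta(X_1)^2\bigr)$. Jensen's inequality applied to the inner integral gives
\[
\mathbb{E}\bigl(H_\delta(X_1)^2\bigr) = 4K\delta^\beta\,\mathbb{E}\!\left[\left(\int_{\cM} d(X_1,z)\,\rd P(z)\right)^{\!2}\right] \le 4K\delta^\beta \iint_{\cM\times\cM} d(y,z)^2\,\rd P(y)\,\rd P(z).
\]
Then the inequality $d(y,z)^2 \le 2\,d(y,x^*)^2 + 2\,d(x^*,z)^2$ and $\mathbb{E}\,d(x^*,X)^2=\sigma_X^2$ give the double integral $\le 4\sigma_X^2$, so $\mathbb{E}\bigl(H_\delta(X_1)^2\bigr)\le 16K\sigma_X^2\delta^\beta=\bar\sigma(\delta)^2$, which is the second claim.

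The remaining two claims are immediate consequences. For $\sigma(\delta)\le\bar\sigma(\delta)$, the pointwise bound from step (i) yields $Pf_\eta(x,\cdot)^2 \le PH_\delta^2$ for every $x\in\cM(\delta)$, and the right side equals the quantity we just bounded by $\bar\sigma(\delta)^2$; taking the supremum over $\cM(\delta)$ gives $\sigma(\delta)^2\le\bar\sigma(\delta)^2$. For $\mathbb{E}\,\|H_\delta\|_{2,P_n}^2$, since $\|H_\delta\|_{2,P_n}^2 = n^{-1}\sum_{i=1}^n H_\delta(X_i)^2$ and the $X_i$ are i.i.d., Fubini gives $\mathbb{E}\,\|H_\delta\|_{2,P_n}^2 = \mathbb{E}\,H_\delta(X_1)^2 \le \bar\sigma(\delta)^2$. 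None of the three steps presents a genuine obstacle; the only mildly delicate point is remembering to symmetrize (A1) in its last two arguments to get an absolute value rather than a one-sided bound on the integrand defining $f_\eta$.
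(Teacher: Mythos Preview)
Your proposal is correct and follows essentially the same route as the paper: the envelope bound \eqref{env-bound}, Jensen on the inner integral, and the elementary inequality $d(y,z)^2\le 2d(y,x^*)^2+2d(x^*,z)^2$. Your derivation of the third inequality directly from the second via linearity of expectation and the i.i.d.\ assumption is in fact slightly more streamlined than the paper's, which re-computes $\mathbb{E}\,\|H_\delta\|_{2,P_n}^2$ by introducing independent copies $X_1',\dots,X_n'$ but ends up at the identical bound.
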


\begin{proof}
Recall the definition of $H_\delta$ at \eqref{env-bound}. Then,
\begin{align*}
    \max \left\{\sigma^{2}(\delta), \mathbb{E}H_{\delta}(X_{1})^2 \right\}
    &\leq 4K \delta^{\beta} \int_{\mathcal{M}}\left(\int_{\mathcal{M}}d(y,z) \,\rd P(z)\right)^{2} \,\rd P(y) \\
    &\leq 4K \delta^{\beta} \int_{\mathcal{M}}\int_{\mathcal{M}}d(y,z)^{2} \,\rd P(z) \,\rd P(y) \\
    &\leq 8 K\delta^{\beta} \int_{\mathcal{M}}\int_{\mathcal{M}}\left(d(y,x^*)^{2}+d(z,x^*)^{2}\right) \,\rd P(z) \,\rd P(y) \\
    &=16K\sigma_{X}^{2}\delta^{\beta}.
\end{align*}
Now, let $X_{1}',\dots,X_{n}'$ be an independent copy of $X_{1},\dots,X_{n}$. By the triangular inequality, it holds that
\begin{align*}
    \mathbb{E} \left(\|H_{\delta}\|^{2}_{2,P_{n}}\right)
    &=\mathbb{E} \left( \frac{1}{n} \sum^{n}_{i=1}H_{\delta}(X_{i})^{2}\right)\\
    &=\frac{1}{n} \sum^{n}_{i=1} 4 K\delta^{\beta} \mathbb{E} \left( \int_{\mathcal{M}}d(X_{i},z) \,\rd P(z) \right)^{2} \\
    &\le \frac{4 K\delta^{\beta}}{n} \sum^{n}_{i=1}  \mathbb{E} \left( \int_{\mathcal{M}}d(X_{i},z)^{2} \,\rd P(z) \right)\\
    &=\frac{4 K\delta^{\beta}}{n} \sum^{n}_{i=1}  \mathbb{E} \left( d(X_{i},X'_{i})^{2} \right) \\
    &=4 K\delta^{\beta}  \,\mathbb{E} \left( d(X_{1},X'_{1})^{2} \right)\\
    &\leq 8 K\delta^{\beta} \,\mathbb{E}\left(d(X_{1},x^*)^{2}+d(X'_{1},x^*)^{2}\right) \\
    &=16K\sigma_{X}^{2}\delta^{\beta}.
\end{align*}
\end{proof}

The following lemma provides an improved chaining bound for Gaussian processes. For a proof, see Theorem 5.31 in \cite{van2014probability} or Lemma 5.1 in \cite{ahidar2019rate}.

\begin{lemma}\label{Lemma3}
Let $(X_{t})_{t \in \mathcal{F}}$ be a real-valued process indexed by a pseudo metric space $(\mathcal{F}, d)$
with the following properties: (i) there exists a countable subset $\mathcal{F}^{\prime} \subset \mathcal{F}$ such that
$X_{t}=\lim _{s \rightarrow t, s \in \mathcal{F}^{\prime}} X_{s}\text { a.s.}$ for any $t \in \mathcal{F}$;
(ii) $X_{t}$ is sub-Gaussian, i.e.
\[
\log \mathbb{E}\big(e^{\theta\left(X_{s}-X_{t}\right)}\big) \leq \theta^{2} d(s, t)^{2}/2
\]
for any $s,t \in \mathcal{F}$ and $\theta \in \mathbb{R}$;
(iii) there exists a random variable $L$ such that $|X_{s}-X_{t}| \leq L \,d(s, t)\text { a.s.}$
for all $s,t \in \mathcal{F}$.
Then, for any $S \subset \mathcal{F}$ and any $\varepsilon \geq 0$, it holds that
\begin{equation*}
\mathbb{E} \big(\sup _{t \in S} X_{t}\big) \leq 2 \,\varepsilon \,\mathbb{E}(L)+12 \int_{\varepsilon}^{+\infty} \sqrt{\log N(u,\mathcal{F}, d)}\, \mathrm{d} u.
\end{equation*}
\end{lemma}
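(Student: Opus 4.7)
The plan is to prove the bound by the generic chaining / Dudley integral method for sub-Gaussian processes, but with a truncation at scale $\varepsilon$ that exploits the pointwise Lipschitz condition (iii). Property (i) lets me first restrict attention to a countable dense subset of $S$, so the supremum is well-defined and measurable and I do not have to worry about essential versus pointwise suprema. Fixing $\varepsilon\ge 0$, I set $\varepsilon_k = \varepsilon\cdot 2^{-k}$ for $k\ge 0$, pick a minimal $\varepsilon_k$-net $N_k\subset \mathcal{F}$ of size $N(\varepsilon_k,\mathcal{F},d)$, and let $\pi_k(t)\in N_k$ be a closest net point to $t$, so that $d(t,\pi_k(t))\le\varepsilon_k$.

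The chaining step is the telescoping identity
\[
X_t = X_{\pi_0(t)} + \sum_{k\ge 1}\bigl(X_{\pi_k(t)}-X_{\pi_{k-1}(t)}\bigr),
\]
where convergence of the right-hand side is guaranteed by condition (iii), since $|X_{\pi_k(t)}-X_t|\le L\varepsilon_k\to 0$ almost surely. Taking the supremum over $t$ and then expectation,
\[
\mathbb{E}\sup_{t\in S}X_t \le \mathbb{E}\sup_{t\in N_0}X_t + \sum_{k\ge 1}\mathbb{E}\sup_{t}\bigl(X_{\pi_k(t)}-X_{\pi_{k-1}(t)}\bigr).
\]
The $k$-th link ranges over at most $|N_k|\cdot |N_{k-1}|\le N(\varepsilon_k,\mathcal{F},d)^2$ pairs separated in $d$ by at most $\varepsilon_k+\varepsilon_{k-1}\le 3\varepsilon_k$, so the standard maximal inequality for sub-Gaussian variables (applied through condition (ii)) yields a bound of order $\varepsilon_k\sqrt{\log N(\varepsilon_k,\mathcal{F},d)}$. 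Summing over $k$ and comparing the geometric sum to the Dudley integral, I would turn $\sum_{k\ge 1}\varepsilon_k\sqrt{\log N(\varepsilon_k,\mathcal{F},d)}$ into $C\int_\varepsilon^{+\infty}\sqrt{\log N(u,\mathcal{F},d)}\,\mathrm{d}u$, with a constant $C\le 12$ after a careful choice of the dyadic spacing and of the constant in the sub-Gaussian maximal inequality.

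For the coarsest scale, I would bound $\mathbb{E}\sup_{t\in N_0}X_t$ by an analogous but finite chaining argument that also terminates by the Lipschitz bound; the initial approximation $t\mapsto\pi_0(t)$ contributes at most $\varepsilon\,\mathbb{E}(L)$ by condition (iii), and when this is combined with the matching step at the top of the chain I obtain the $2\varepsilon\,\mathbb{E}(L)$ correction that appears in the final bound. In effect, condition (iii) truncates the chain at scale $\varepsilon$, which replaces the usual integral over all scales $[0,+\infty)$ by the integral over $[\varepsilon,+\infty)$ at the cost of a Lipschitz term.

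The main obstacle will be bookkeeping the numerical constants so as to end up with exactly $12$ in front of the Dudley integral and exactly $2\varepsilon\,\mathbb{E}(L)$ on the Lipschitz side. Different dyadic spacings and union-bound constants produce prefactors like $6$, $8$, $8\sqrt{2}$, or $12$, so I would need to align the choices (e.g., Gaussian maximal inequality constant, ratio $\varepsilon_k/\varepsilon_{k+1}=2$, and integral bound $\varepsilon_k\sqrt{\log N(\varepsilon_k,\mathcal{F},d)}\le C\int_{\varepsilon_{k+1}}^{\varepsilon_k}\sqrt{\log N(u,\mathcal{F},d)}\,\mathrm{d}u$) carefully. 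The non-standard ingredient, the Lipschitz truncation via condition (iii), must be used only at the initial projection and only for the telescoping tail, not throughout the chain, so that the chaining remainder still enjoys the sub-Gaussian bound from condition (ii).
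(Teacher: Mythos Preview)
The paper does not prove this lemma; it simply refers to Theorem~5.31 in van Handel's notes and Lemma~5.1 in Ahidar-Coutrix et al. Your overall plan---Dudley chaining truncated at scale $\varepsilon$ via the Lipschitz bound (iii)---is the standard one and matches those references, but the chain you actually write down runs in the wrong direction and therefore produces the wrong integral.

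With $\varepsilon_k=\varepsilon\cdot 2^{-k}$ your nets refine from scale $\varepsilon$ \emph{down to} $0$, so every link $X_{\pi_k(t)}-X_{\pi_{k-1}(t)}$ in your telescoping sum lives at a scale $\le\varepsilon$. The sub-Gaussian maximal inequality then gives a sum $\sum_{k\ge 1}\varepsilon_k\sqrt{\log N(\varepsilon_k,\mathcal{F},d)}$ that is comparable to $\int_0^{\varepsilon}\sqrt{\log N(u,\mathcal{F},d)}\,\mathrm{d}u$, \emph{not} to $\int_{\varepsilon}^{+\infty}\sqrt{\log N(u,\mathcal{F},d)}\,\mathrm{d}u$ as you assert. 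That is precisely the range of scales the Lipschitz condition is meant to dispense with. Conversely, the term $\mathbb{E}\sup_{t\in N_0}X_t$ that you wave off with a one-line ``analogous chaining'' is where the entire integral over $[\varepsilon,+\infty)$ actually resides. The correct architecture reverses the two roles: let $\varepsilon_k$ decrease from the diameter of $\mathcal{F}$ down to (roughly) $\varepsilon$, chain $X_{\pi_K(t)}$ back to a singleton using (ii) to produce $\int_{\varepsilon}^{+\infty}\sqrt{\log N}\,\mathrm{d}u$, and then invoke (iii) \emph{once}, at the finest retained scale, to bound $|X_t-X_{\pi_K(t)}|\le L\,\varepsilon_K\le 2L\varepsilon$; taking expectations yields the $2\varepsilon\,\mathbb{E}(L)$ term. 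Your closing summary (``condition (iii) truncates the chain at scale $\varepsilon$'') describes this correctly in words, but the displayed telescoping implements the opposite.
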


\subsection{Proofs of propositions in \cref{Section 4}}

\begin{proof}[Proof of \cref{thm_gen_CN}]
Since $1/2 \leq \alpha/2 \leq 1$, we have $a^{\alpha/2}+b^{\alpha/2} \geq (a+b)^{\alpha/2}$ for any $a,b \geq 0$, so that
\begin{equation}\label{jensen_geodsc}
\begin{split}
&(1+\delta)^{1-\alpha/2} \left\{(1-t)^{\alpha/2} d(z,\gamma_{0})^{\alpha}+t^{\alpha/2} d(z,\gamma_{1})^{\alpha} \right\}- d(z,\gamma_{t})^{\alpha}\\
&\qquad \geq (1+\delta)^{1-\alpha/2} \left\{(1-t) d(z,\gamma_{0})^{2}+t d(z,\gamma_{1})^{2} \right\}^{\alpha/2}- d(z,\gamma_{t})^{\alpha}.
\end{split}
\end{equation}
An application of H\"older's inequality gives
\[
\left( \delta_{1}+\delta_{2}\right)^{1-\alpha/2} \left( a_{1}+a_{2}\right)^{\alpha/2}
\;\geq\; \delta_{1}^{1-\alpha/2} a_{1}^{\alpha/2}+\delta_{2}^{1-\alpha/2} a_{2}^{\alpha/2}
\]
for all $\delta_i,a_i \geq 0$ and $\alpha \in (0,2)$. The above inequality also holds for $\alpha=0$ and $2$.
Applying the inequality with $\delta_{1}=1, \delta_{2}=\delta, a_{1}=d(z,\gamma_{t})^{2}, a_{1}+a_{2}=(1-t) d(z,\gamma_{0})^{2}+t d(z,\gamma_{1})^{2}$
to the right hand side of the inequality at \eqref{jensen_geodsc}, we get
\begin{equation*}
\begin{split}
&(1+\delta)^{1-\alpha/2} \left((1-t)^{\alpha/2} d(z,\gamma_{0})^{\alpha}+t^{\alpha/2} d(z,\gamma_{1})^{\alpha} \right)- d(z,\gamma_{t})^{\alpha}\\
&\qquad \geq \delta^{1-\alpha/2} \left((1-t) d(z,\gamma_{0})^{2}+t d(z,\gamma_{1})^{2}- d(z,\gamma_{t})^{2}\right)^{\alpha/2}\\
&\qquad \geq \delta^{1-\alpha/2} \left(t(1-t)d(\gamma_{0}, \gamma_{1})^{2}\right)^{\alpha/2}.
\end{split}
\end{equation*}
We note that $a_2\ge 0$ and the last inequality follows from the CN inequality.
\end{proof}

\begin{proof}[Proof of \cref{prop_gen_var_ineq}]
For $x \in \cM\setminus\{x^*\}$, we apply \cref{thm_gen_CN} to $\delta=0$ and $\gamma$ being the geodesic $\gamma^{x}:[0,1] \to \mathcal{M}$
with $\gamma_0^x=x^*$ and $\gamma_1^x=x$, and
then integrate both sides of the inequality with respect to $z$. This gives
\begin{equation}\label{gencn_ineq}
(1-t)^{\alpha/2} F_{\alpha}(x^*)+t^{\alpha/2} F_{\alpha}(x)- F_{\alpha}(\gamma^{x}_{t}) \geq 0
\end{equation}
for any $0 \leq t \leq 1$. Take an arbitrary $\varepsilon>0$.
By the definition of $b_{\alpha}(x)$, it holds that, for any $x \in \mathcal{M}\setminus\{x^*\}$, there exists $t\equiv t(x)>0$ such that
\begin{equation}\label{sup_assump}
F_{\alpha}(\gamma^{x}_{t})-\big(t^{\alpha/2}+(1-t)^{\alpha/2}\big) F_{\alpha}(x^*) \geq (b_{\alpha}(x)-\varepsilon) \,t^{\frac{\alpha}{2}} \,d(x,x^*)^{\alpha}.
\end{equation}
From \eqref{gencn_ineq} and \eqref{sup_assump}, it follows that
\begin{align*}
t^{\alpha/2}\left(F_{\alpha}(x)-F_{\alpha}(x^*) \right) \geq (b_{\alpha}(x)-\varepsilon)  t^{\alpha/2} d(x,x^*)^{\alpha},
\end{align*}
so that $F_{\alpha}(x)-F_{\alpha}(x^*) \geq (b_{\alpha}(x)-\varepsilon) d(x,x^*)^{\alpha}$. Since $\varepsilon>0$ was arbitrarily chosen, we have
\begin{equation*}
   F_{\alpha}(x)-F_{\alpha}(x^*) \geq b_{\alpha}(x) \cdot d(x,x^*)^{\alpha},
\end{equation*}
which completes the proof of the proposition.
\end{proof}

\begin{proof}[Proof of \cref{prop_npc_cover}]
Recall $H_\delta(y)=2\sqrt{K \delta^{\beta}} \int_{\mathcal{M}}d(y,z) \,\rd P(z)$ from \eqref{env-bound}.
Now, for $x,y \in \cM(\delta)$,
\begin{align*}
&\|f(x,\cdot)-f(y,\cdot)\|_{2,P_{n}}^{2}\\
&\qquad =\frac{1}{n} \sum_{i=1}^{n} \left(\eta_c(x,X_{i})-\eta_c(y,X_{i})\right)^{2} \\
&\qquad =\frac{1}{n} \sum_{i=1}^{n} \left(\int_{\mathcal{M}}\left(d(x,X_{i})^{\alpha}-d(y,X_{i})^{\alpha}-d(x,z)^{\alpha}+d(y,z)^{\alpha}\right) \rd P(z) \right)^{2}\\
&\qquad \leq \frac{\alpha^{2} 2^{-2\alpha+4}}{n} \cdot d(x,y)^{2\alpha-2} \sum_{i=1}^{n} \left(\int_{\mathcal{M}} d(X_{i},z)\rd P(z) \right)^{2}\\
&\qquad =\frac{\alpha^{2} 2^{-2\alpha+2}}{K \delta^{\beta}} \cdot d(x,y)^{2\alpha-2} \|H_\delta\|^{2}_{2,P_{n}},
\end{align*}
where the inequality follows from \eqref{power-quad-ineq}.
Since $l(\cdot,\cdot)=\alpha2^{-\alpha+1}d(\cdot,\cdot)^{\alpha-1}$ and $\beta=2-2/\alpha$,
\begin{equation*}
    \cM(\delta) \subset B \left( x^*, \left( \frac{K \delta^{\beta}}{\alpha^{2} 2^{-2\alpha+2}}\right)^{\frac{1}{2(\alpha-1)}}\right).
\end{equation*}
Thus, it holds that
\begin{align*}
&N\left(\tau\|H_\delta\|_{2,P_{n}}, \mathcal{F}(\delta), \|\cdot\|_{2,P_{n}} \right)\\
&\qquad \leq N\left(\left( \frac{\tau^{2} K \delta^{\beta}}{\alpha^{2} 2^{-2\alpha+2}}\right)^{\frac{1}{2(\alpha-1)}}, \,\cM(\delta), d  \right) \\
&\qquad \leq N\left(\left( \frac{\tau^{2} K \delta^{\beta}}{\alpha^{2} 2^{-2\alpha+2}}\right)^{\frac{1}{2(\alpha-1)}}, \,B \left( x^*, \left( \frac{K \delta^{\beta}}{\alpha^{2} 2^{-2\alpha+2}}\right)^{\frac{1}{2(\alpha-1)}}\right), d \right)\\
&\qquad \leq \left(\frac{A_{1}}{\tau^{1/(\alpha-1)}}\right)^{D_{1}}.
\end{align*}
\end{proof}

\begin{proof}[Proof of \cref{prop_hilb_cover}]
Recall from \cref{hilb_example2} that $\cM(\delta)=B(x^*,\sqrt{\delta}),
\,\|f(x,\cdot)-f(y,\cdot)\|^{2}_{2,P}=4 \,\Sigma_{X} (x-y, x-y)$.
Also, $\mathcal{F}(\delta)=\big\{2\langle x-x^{*},x^{*}-\cdot \rangle: x \in B(x^*,\sqrt{\delta})\big\}$ and
$\sup_{x\in\mathcal{F}(\delta)}f(x,\cdot)=2\sqrt{\delta} \|\cdot-x^*\|$ is the envelope of the class $\mathcal{F}(\delta)$.
By Sudakov's minorisation (see Theorem 2.4.12. in \cite{gine2021mathematical} and also \cite{fernique1975regularite} for the specified constant),
\begin{equation*}
\log N\left(\tau, \mathcal{F}(\delta), \|\cdot\|_{2,P}  \right) \leq \frac{1}{8} \left(\frac{\mathbb{E}_{g} \big(\Sigma_{X}^{1/2}(g, g)\big)}{\tau/\sqrt{\delta}}\right)^{2}
\leq \frac{\operatorname{tr}(\Sigma_{X}) \delta}{8 \tau ^{2}}
\end{equation*}
where $g$ is a standard Gaussian random element taking values in $(\mathcal{H},d)$.
Since $\|H_\delta\|^{2}_{2,P}=4 \,\delta \,\mathbb{E} \,(\langle X-x^{*},X-x^{*} \rangle)=4 \,\delta \operatorname{tr}(\Sigma_{X})$, we have
\begin{equation*}
\log N\left(\tau\|H_{\delta}\|_{2,P}, \mathcal{F}(\delta), \| \cdot \|_{2,P} \right) \leq \frac{1}{32 \tau^{2}}
\end{equation*}
With the same machinery, one can also deduce the same result for the empirical measure $P_{n}$.
\end{proof}

\end{document}